\titleformat{\section}[block]
 {\bfseries}
 {\thesection.}
 {\fontdimen2\font}
 {}
\setlist{noitemsep}
\newtheorem{theorem}{Theorem}[section]
\newtheorem{corollary}[theorem]{Corollary}
\newtheorem{proposition}[theorem]{Proposition}
\theoremstyle{definition}
\newtheorem{example}[theorem]{Example}
\newtheorem{question}{Question}
\renewcommand{\emptyset}{\varnothing}
\numberwithin{equation}{section}
\DeclareMathOperator{\uhr}{\upharpoonright}
\DeclareMathOperator{\leqt}{\trianglelefteq} 
\newcommand{\cmpl}{{^{_\complement}}}
\newcommand{\dblr}[1]{\langle\mkern-4mu\langle{#1}
  \rangle\mkern-4mu\rangle_{\leqt_f}}  
\DeclareMathAlphabet{\mathpzc}{OT1}{pzc}{m}{it}
\DeclareMathOperator{\sel}{\mathpzc{V\mkern-5mu_{cs}}}
\begin{document}

\author{Valentin Gutev}

\address{Institute of Mathematics and Informatics, Bulgarian Academy
  of Sciences, Acad. G. Bonchev Street, Block 8, 1113 Sofia, Bulgaria}

\email{\href{mailto:gutev@math.bas.bg}{gutev@math.bas.bg}}
   
\subjclass[2010]{54B15, 54B20, 54C65, 54D30, 54F05, 54F65}

\keywords{Ordinal decomposition, quasi-ordinal decomposition, Vietoris
  topology, point-maximal selection, point-minimal selection, set
  clopen modulo a point.}

\title{Ordinal Decompositions and Extreme Selections}

\begin{abstract}
  The paper contains two natural constructions of extreme hyperspace
  selections generated by special ordinal decompositions of the
  underlying space. These constructions are very efficient not only
  in simplifying arguments but also in clarifying the ideas behind
  several known results. They are also crucial in obtaining some
  new results for such extreme selections. This is achieved by using
  special sets called clopen modulo a point. Such sets are naturally
  generated by a relation between closed sets and points of the space
  with respect to a given hyperspace selection.
\end{abstract}

\date{\today}
\maketitle

\section{Introduction}

All spaces in this paper are Hausdorff topological spaces. For a space
$X$, let $\mathscr{F}(X)$ be the set of all nonempty closed subsets of
$X$. Also, for a family $\mathscr{V}$ of subsets of $X$, let
\begin{equation}
  \label{eq:Min-Sel-v35:6}
  \langle\mathscr{V}\rangle = \left\{S\in \mathscr{F}(X) : S\subset
  \bigcup \mathscr{V}\ \ \text{and}\ \ S\cap V\neq \emptyset,\
  \hbox{whenever}\ V\in \mathscr{V}\right\}.
\end{equation}
In case $\mathscr{V}=\{V_1,\dots, V_n\}$, we will simply write
$\langle V_1,\dots, V_n\rangle$.\medskip 

Usually, we endow $\mathscr{F}(X)$ with the \emph{Vietoris topology}
$\tau_V$, and call it the \emph{Vietoris hyperspace} of $X$. Recall
that $\tau_V$ is generated by all collections of the form
$ \langle\mathscr{V}\rangle $, where $\mathscr{V}$ runs over the
finite families of open subsets of $X$. In the sequel, any subset
$\mathscr{D}\subset \mathscr{F}(X)$ will carry the relative Vietoris
topology $\tau_V$ as a subspace of the hyperspace
$(\mathscr{F}(X),\tau_V)$.  A map $f:\mathscr{D}\to X$ is a
\emph{selection} for $\mathscr{D}$ if $f(S)\in S$ for every
$S\in\mathscr{D}$. A selection $f:\mathscr{D}\to X$ is
\emph{continuous} if it is continuous with respect to the relative
Vietoris topology $\tau_V$ on $\mathscr{D}$, and we use
$\sel[\mathscr{D}]$ to denote the set of all \emph{Vietoris continuous
  selections} for $\mathscr{D}$. \medskip

It was shown by Michael in \cite[Lemmas 7.2 and 7.3]{michael:51} that
if $X$ is a connected space, $f\in \sel[\mathscr{F}(X)]$ and $p=f(X)$,
then $f(S)=p$ for every $S\in \mathscr{F}(X)$ with $p\in S$. Motivated
by this property, for an arbitrary space $X$ and a point $p\in X$, a
selection $f\in \sel[\mathscr{F}(X)]$ is called \emph{$p$-maximal}
\cite{garcia-ferreira-gutev-nogura-sanchis-tomita:99} (see also
\cite{gutev-nogura:00b}) if $f(S)=p$ for every $S\in \mathscr{F}(X)$
with $p\in S$. In this case, we also say that the point $p\in X$ is
\emph{selection-maximal}, and refer to the selection $f$ as
\emph{point-maximal}. Similarly, a selection
$f\in\sel[\mathscr{F}(X)]$ is called \emph{$p$-minimal}
\cite{garcia-ferreira-gutev-nogura-sanchis-tomita:99} if $f(S)\neq p$
for every $S\in \mathscr{F}(X)$ with $S\neq\{p\}$. Just like before,
we say that $f$ is \emph{point-minimal} and refer to the corresponding
point as \emph{selection-minimal}. In these terms, a connected space
$X$ can only have point-maximal selections for $\mathscr{F}(X)$, some
of which could be also point-minimal, see e.g. \cite[Lemma
10]{nogura-shakhmatov:97a}. Furthermore, as shown by Michael in
\cite[Proposition 7.8]{michael:51}, such a space $X$ can have at most
two continuous selections for $\mathscr{F}(X)$. This result was
refined in \cite[Theorem 1]{nogura-shakhmatov:97a} where it was shown
that a non-degenerate connected space $X$ has exactly two continuous
selections for $\mathscr{F}(X)$ if and only if it is compact and
orderable. For a space $X$ with $\sel[\mathscr{F}(X)]\neq \emptyset$,
another refinement of \cite[Proposition 7.8]{michael:51} was obtained
in \cite[Corollary
2.3]{garcia-ferreira-gutev-nogura-sanchis-tomita:99}, namely that $X$
has at most two continuous selections for $\mathscr{F}(X)$ if and only
if it has only point-maximal selections for
$\mathscr{F}(X)$. Moreover, as shown in \cite[Theorem
2.1]{garcia-ferreira-gutev-nogura-sanchis-tomita:99}, such a space $X$
can have at most two connected components.\medskip

The case when $\mathscr{F}(X)$ has ``many'' point-maximal selections
was studied in \cite{gutev-nogura:00b, gutev-nogura:03a}. A space $X$
is called \emph{selection pointwise-maximal} \cite{gutev-nogura:03a}
if each point $p\in X$ is selection-maximal.  It was shown in the
proof of \cite[Theorems 1.4]{gutev-nogura:00b} that each selection
pointwise-maximal space is zero-dimensional. Subsequently, a complete
description of these spaces was given in \cite[Theorem
4.4]{gutev-nogura:03a}. In contrast, selection-minimal points were not
studied in a systematic way. In \cite{gutev:00e,gutev-nogura:00d} they
played an interesting role in extending selections to one-point
compactifications in locally compact spaces, and in
\cite{Gutev2022a}\,---\,for hyperspace selections avoiding
points. Selection-minimal points were also studied in
\cite{MR3122363}. Such points allow us to consider yet another extreme
case, namely, we will say that a space $X$ is \emph{selection
  pointwise-minimal} if each point of $X$ is selection-minimal. This
property is also naturally related to disconnectedness. Indeed, as
shown in \cite[Proposition 4.4]{MR3122363}, each selection
pointwise-minimal space is totally disconnected.\medskip

For a space $X$, following \cite{gutev-nogura:03a}, we will say that a
nonempty closed subset $H\subset X$ is \emph{clopen modulo a point} if
$H\setminus\{p\}$ is open for some point $p\in H$. When clarity seems
to demand it, we will also say that $H$ is \emph{clopen modulo the
  point $p$}. The following family will play a crucial role in all our
considerations:
\begin{equation}
  \label{eq:Min-Sel-v37:1}
  \Delta(X)=\left\{H\in \mathscr{F}(X): H\ \text{is clopen modulo a
      point} \right\}. 
\end{equation}
Turning to examples of such sets, let us recall that a subset
$H\subset X$ is called \emph{almost clopen} \cite{Dow2018} if it is
the closure of an open subset of $X$ and has a unique boundary
point. Evidently, the family $\Delta(X)$ contains all almost clopen
subsets of $X$. This family also contains all singletons of $X$.
Similarly, $\Delta(X)$ contains all nonempty clopen subsets of $X$.
In fact, if a subset $H\subset X$ is clopen modulo points $p,q\in H$
with $p\neq q$, then it must be clopen. This follows easily from the
fact that, in this case, $H=(H\setminus\{p\})\cup (H\setminus\{q\})$
and therefore $H$ is open as well. Thus, if $H\in \Delta(X)$ is not
clopen, then it is clopen modulo a unique point $p\in H$.\medskip

Sets clopen modulo a point have been implicitly present in various
results about extreme-like hyperspace selections, see e.g.\
\cite{garcia-ferreira-gutev-nogura-sanchis-tomita:99,
  gutev:05a,MR3712970,Gutev2024a,gutev-nogura:03a, jiang:10}, but have
not been explicitly defined. This has led to arguments that are
somewhat technically demanding. We can now state also the main purpose
of this paper. Namely, in this paper we will show that such sets,
similar to clopen, are instrumental in simplifying arguments and
clarifying the ideas behind several of these results. They are also
instrumental in obtaining some new results for such extreme
selections.\medskip

In the next section, we consider special decompositions
${\mathscr{D}\subset \Delta(X)}$ of a space $X$ for which the
decomposition space $\mathscr{D}$ is homeomorphic to a compact ordinal
space, and call them \emph{ordinal decompositions}. In the same
section, in Theorems \ref{theorem-Min-Sel-v35:1} and
\ref{theorem-Min-Sel-v35:2}, we show how such decompositions generate
point-maximal and point-minimal selections for $\mathscr{F}(X)$. In
\eqref{eq:Min-Sel-v27:1} and \eqref{eq:Min-Sel-v27:2} of Section
\ref{sec:select-relat-sets}, we describe a natural construction of
members of the family $\Delta(X)$ related to a given hyperspace
selection. This construction gives a very simple proof of two known
results, see Corollaries \ref{corollary-Min-Sel-v32:1} and
\ref{corollary-Min-Sel-v18:1}. In Section \ref{sec:cut-points-first},
we consider general cut points of a space $X$ that correspond exactly
to the almost clopen members of the family $\Delta(X)$. Next, we give
a simple direct proof that a space $X$ is first countable and
zero-dimensional at each selection-maximal cut point, see Proposition
\ref{proposition-Ord-Dec-vgg:1} and Corollary
\ref{corollary-Min-Sel-v25:1}. The original proof of this fact in
\cite{gutev-nogura:03a} is not only technically demanding, but also
based on other results. Things are culminating in the last two
sections of this paper. In Section \ref{sec:select-pointw-maxim}, we
give a very natural construction of ordinal decompositions generated
by point-maximal selections. In particular, we give a very simple
proof of the topological characterisation of selection
pointwise-maximal spaces in \cite{gutev-nogura:03a}, see Theorem
\ref{theorem-point-extreme-complete}. In Section
\ref{sec:point-extr-select}, as an application of ordinal
decompositions, we show that for each selection pointwise-maximal
pseudocompact space $X$, the Vietoris hyperspace
$\mathscr{F}(\beta X)$ of the {\v C}ech-Stone compactification
$\beta X$ of $X$ has a $p$-maximal selection for each point $p\in X$
(Theorem \ref{theorem-Min-Sel-v40:2}). Also, we show that each
selection pointwise-maximal space is selection pointwise-minimal
(Theorem \ref{theorem-sa_points-v9:2}).

\section{Selections Generated by Ordinal Decompositions}

For an ordinal $\gamma$, we often identify $[0,\gamma]$ with
$\gamma+1$. In order to avoid any misunderstanding, we will not use
this convention. In this setting, the \emph{standard $\gamma$-maximal
  selection} for $\mathscr{F}([0,\gamma])$ is defined by
\begin{equation}
  \label{eq:Min-Sel-v35:2}
  \mathscr{F}([0,\gamma])\ni S\quad \longrightarrow\quad \max S\in [0,\gamma]. 
\end{equation}
Similarly, the \emph{standard $\gamma$-minimal selection} for
$\mathscr{F}([0,\gamma])$ is just expressing the property that
$[0,\gamma]$ is a well-ordered set, namely
\begin{equation}
  \label{eq:Min-Sel-v35:4}
  \mathscr{F}([0,\gamma])\ni S\quad \longrightarrow\quad \min S\in [0,\gamma]. 
\end{equation}

In this section, we will extend these constructions to a more general
situation. For this purpose, we will say that
$\mathscr{D}\subset \Delta(X)$ is an \emph{ordinal decomposition} of a
space $X$ if it is an upper semi-continuous decomposition such that
the decomposition space $\mathscr{D}$ is homeomorphic to a compact
ordinal space $[0,\gamma]$. Thus, identifying $\mathscr{D}$ with
$[0,\gamma]$, an ordinal decomposition $\mathscr{D}$ is identical to a
quotient closed map $\eta:X\to [0,\gamma]$ such that
$\mathscr{D}=\left\{\eta^{-1}(\alpha): \alpha\leq
  \gamma\right\}\subset \Delta(X)$.\medskip

Let $\eta:X\to [0,\gamma]$ be an ordinal decomposition of a space
$X$. Using the $\gamma$-max\-imal selection for
$\mathscr{F}([0,\gamma])$ in \eqref{eq:Min-Sel-v35:2}, to each family
$g_\alpha\in \sel[\mathscr{F}(\eta^{-1}(\alpha))]$,
$\alpha\leq \gamma$, of selections we will associate the selection
$f=\bigvee_{\alpha\leq \gamma} g_\alpha$ for $\mathscr{F}(X)$ defined
by
\begin{equation}
  \label{eq:Min-Sel-v35:3}
  f(S)=g_{\max \eta(S)}\left(\eta^{-1}(\max \eta(S))\right)\quad
  \text{for every $S\in 
    \mathscr{F}(X)$.} 
\end{equation}
The idea of this construction is illustrated in the diagram
below:

\begin{center}
\begin{tikzcd}[row sep=1em, column sep=2.5em, /tikz/la/.style={label={[rotate=90,anchor=west,inner
sep=1pt,label={right:\mathscr{F}(X)}]:\in}}, /tikz/la1/.style={label={[rotate=90,anchor=west,inner
sep=1pt,label={right:\mathscr{F}([0,\gamma])}]:\in}}, /tikz/la2/.style={label={[rotate=90,anchor=west,inner
sep=1pt,label={right:[0,\gamma]}]:\in}}, /tikz/la3/.style={label={[rotate=90,anchor=west,inner
sep=1pt,label={right:\mathscr{F}\left(\eta^{-1}(\max
    \eta(S))\right)}]:\in}}, /tikz/la4/.style={label={[rotate=90, anchor=west,inner
sep=1pt,label={right:X}]:\in}}]
|[la]| S \arrow[r,"\eta"]  &[-1.5em] 
|[la1]| \eta(S) \arrow[r,"\max"] &[-1em]
|[la2]| \max \eta(S) \arrow[r,"\eta^{-1}"] &[-1em]
|[la3]| \eta^{-1}(\max \eta(S)) \arrow[r, "g_{\max \eta(S)}"]  &
|[la4]| f(S)
\end{tikzcd}
\end{center}

The resulting selection $f=\bigvee_{\alpha\leq \gamma} g_\alpha$ is
continuous in the following special case.

\begin{theorem}
  \label{theorem-Min-Sel-v35:1}
  Let $\eta:X\to [0,\gamma]$ be an ordinal decomposition of a space
  $X$, and $g_\alpha\in \sel[\mathscr{F}(\eta^{-1}(\alpha))]$,
  $\alpha\leq \gamma$. If for each limit ordinal $\lambda\leq\gamma$,
  the selection $g_\lambda$ is $q_\lambda$-minimal, where
  $\eta^{-1}(\lambda)$ is clopen modulo the point
  $q_\lambda\in \eta^{-1}(\lambda)$, then the selection
  $f=\bigvee_{\alpha\leq\gamma}g_\alpha$ defined as in
  \eqref{eq:Min-Sel-v35:3} is continuous.
\end{theorem}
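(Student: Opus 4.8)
The plan is to verify continuity of $f=\bigvee_{\alpha\le\gamma}g_\alpha$ at an arbitrary $S\in\mathscr{F}(X)$ by unwinding the composition in the diagram and checking that each constituent map is continuous at the relevant point. Write $\mu=\max\eta(S)$, $p=f(S)=g_\mu\bigl(\eta^{-1}(\mu)\bigr)$, and let $W$ be an open neighbourhood of $p$ in $X$. Since $\eta$ is a closed quotient map, $\eta(S)$ is closed in $[0,\gamma]$, so $\mu\in\eta(S)$ and the map $S\mapsto\max\eta(S)$ is the composite of the (continuous, since $\eta$ is continuous and closed) induced map $\mathscr{F}(X)\to\mathscr{F}([0,\gamma])$ with the standard $\gamma$-maximal selection \eqref{eq:Min-Sel-v35:2}, which is continuous because $[0,\gamma]$ is compact and orderable. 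Thus there is a Vietoris-basic neighbourhood $\mathscr{U}$ of $S$ such that $\max\eta(T)$ lies in a prescribed neighbourhood of $\mu$ for all $T\in\mathscr{U}$; the work is to choose $\mathscr{U}$ so that, in addition, $f(T)\in W$.

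The argument splits according to whether $\mu$ is a successor/zero ordinal or a limit ordinal. If $\mu$ is a successor or $0$, then $\{\mu\}$ is clopen in $[0,\gamma]$, hence $\eta^{-1}(\mu)$ is clopen in $X$; using continuity of $g_\mu$ at $\eta^{-1}(\mu)$ inside $\mathscr{F}(\eta^{-1}(\mu))$, pick an open $V\subseteq\eta^{-1}(\mu)$ with $p\in V$ and $g_\mu(R)\in W$ whenever $R\in\langle V\rangle$ with $R\subseteq\eta^{-1}(\mu)$; then the neighbourhood $\langle X\setminus\eta^{-1}(\mu),\,V\rangle\cap\eta^{-1}\bigl(\langle[0,\mu)\cup\{\mu\}\text{-type set}\rangle\bigr)$ of $S$ works, since for $T$ in it we have $\max\eta(T)=\mu$ and $T\cap\eta^{-1}(\mu)\in\langle V\rangle$, so $f(T)=g_\mu(T\cap\eta^{-1}(\mu))\in W$. (One must also handle the subtlety that $S$ itself may or may not meet $\eta^{-1}(\mu)$ in more than the relevant piece, but since $\mu=\max\eta(S)$ we always have $\emptyset\ne S\cap\eta^{-1}(\mu)=S\cap\eta^{-1}(\mu)$ clopen-separated from the rest of $S$.)

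The limit case is where the hypothesis on $g_\lambda$ is used and is the main obstacle. Suppose $\mu=\lambda$ is a limit ordinal, so $\eta^{-1}(\lambda)$ is only clopen modulo the point $q_\lambda$, and by hypothesis $g_\lambda$ is $q_\lambda$-minimal, forcing $p=g_\lambda(\eta^{-1}(\lambda))=q_\lambda$ unless $\eta^{-1}(\lambda)=\{q_\lambda\}$ (in either case $p=q_\lambda$). The difficulty is that a nearby set $T$ need not satisfy $\max\eta(T)=\lambda$: it can have $\max\eta(T)=\alpha$ for some successor $\alpha<\lambda$ arbitrarily close to $\lambda$, in which case $f(T)=g_\alpha(\eta^{-1}(\alpha))$ lands in a completely different fibre, and we must still force that value into $W$. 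The resolution is to use that $\eta^{-1}(\lambda)\setminus\{q_\lambda\}$ is open in $X$, so $W\cup\bigl(\eta^{-1}(\lambda)\setminus\{q_\lambda\}\bigr)$ together with $\eta^{-1}\bigl([0,\lambda]\bigr)^{_\complement}$-type data gives, by upper semi-continuity and the closedness of $\eta$, a neighbourhood forcing $\eta(T)\subseteq[0,\lambda]$ with $T\subseteq W\cup\eta^{-1}\bigl((\beta,\lambda]\bigr)$ for a suitable $\beta<\lambda$; then either $\max\eta(T)=\lambda$, whence $T\cap\eta^{-1}(\lambda)\subseteq W\cup\{q_\lambda\}$ is a nondegenerate-or-trivial closed subset of $\eta^{-1}(\lambda)$ and $q_\lambda$-minimality of $g_\lambda$ gives $f(T)=g_\lambda(T\cap\eta^{-1}(\lambda))\in W$ (it avoids $q_\lambda$ unless the set is $\{q_\lambda\}\subseteq W$), or $\max\eta(T)=\alpha>\beta$ with $\eta^{-1}(\alpha)\subseteq\eta^{-1}\bigl((\beta,\lambda]\bigr)\subseteq W\cup\eta^{-1}(\lambda)$; shrinking $\beta$ so that moreover $\eta^{-1}\bigl((\beta,\lambda)\bigr)\subseteq W$ (possible because $\eta^{-1}(\lambda)\setminus\{q_\lambda\}$ is open and $q_\lambda=p\in W$, via upper semi-continuity of the decomposition at the fibre $\eta^{-1}(\lambda)$) makes $\eta^{-1}(\alpha)\subseteq W$, hence $f(T)=g_\alpha(\eta^{-1}(\alpha))\in W$. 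Intersecting this neighbourhood with the preimage of a Vietoris neighbourhood of $\eta(S)$ in $\mathscr{F}([0,\gamma])$ that controls $\max\eta(T)$ from below (to keep it $>\beta$ or equal to $\lambda$) completes the construction. I expect the bookkeeping around upper semi-continuity — precisely, extracting the ordinal $\beta<\lambda$ with $\eta^{-1}\bigl((\beta,\lambda)\bigr)\subseteq W$ — to be the one genuinely delicate point, and it is exactly here that "clopen modulo $q_\lambda$" plus $q_\lambda$-minimality of $g_\lambda$ do the essential work.
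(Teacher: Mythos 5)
Your overall strategy (a direct pointwise verification at each $S$, splitting on whether $\mu=\max\eta(S)$ is a successor or a limit) is viable in principle, but the limit case contains a genuine error that leaves the hardest configuration untreated. You assert that $q_\lambda$-minimality of $g_\lambda$ \emph{forces} $p=f(S)=q_\lambda$. This is backwards: a $q_\lambda$-minimal selection satisfies $g_\lambda(A)\neq q_\lambda$ for every $A\neq\{q_\lambda\}$, so $p=g_\lambda\bigl(S\cap\eta^{-1}(\lambda)\bigr)$ equals $q_\lambda$ \emph{only} in the special sub-case $S\cap\eta^{-1}(\lambda)=\{q_\lambda\}$. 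Your entire limit-case construction (in particular the extraction of $\beta<\lambda$ with $\eta^{-1}\bigl((\beta,\lambda)\bigr)\subset W$, which requires $q_\lambda\in W$) is therefore confined to that sub-case, where minimality is in fact not needed at all: once $T\subset W\cup\eta^{-1}([0,\lambda))$ and $\max\eta(T)>\beta$, the value $f(T)\in T\cap\eta^{-1}(\max\eta(T))$ lies in $W$ simply because $g_{\max\eta(T)}$ is a selection. The untreated sub-case is the one where $q_\lambda\in S$ but $S\cap\eta^{-1}(\lambda)\neq\{q_\lambda\}$, so that $p\in\eta^{-1}(\lambda)\setminus\{q_\lambda\}$. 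There, nearby $T$ do satisfy $\max\eta(T)=\lambda$, but $T\mapsto T\cap\eta^{-1}(\lambda)$ is \emph{not} Vietoris continuous at such $S$: since $\eta^{-1}(\lambda)$ is not open, every neighbourhood of $q_\lambda$ meets $\eta^{-1}([0,\lambda))$, so sets $T$ arbitrarily close to $S$ may omit $q_\lambda$ from $T\cap\eta^{-1}(\lambda)$; only $T\mapsto\bigl(T\cap\eta^{-1}(\lambda)\bigr)\cup\{q_\lambda\}$ is continuous. Controlling the discrepancy between $g_\lambda(A)$ and $g_\lambda(A\cup\{q_\lambda\})$ is precisely where the minimality hypothesis must do its work, and your proposal never confronts this.

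For comparison, the paper proceeds by transfinite induction on the restrictions $f_\beta=f\uhr\mathscr{F}\bigl(\eta^{-1}([0,\beta])\bigr)$: at a limit $\lambda$ it writes $\eta^{-1}([0,\lambda])=Y_\lambda\cup Z_\lambda$ with $Y_\lambda=\eta^{-1}([0,\lambda))\cup\{q_\lambda\}$ and $Z_\lambda=\eta^{-1}(\lambda)$, two closed sets meeting exactly in $q_\lambda$, verifies directly that $f_\lambda\uhr\mathscr{F}(Y_\lambda)$ is a continuous $q_\lambda$-\emph{maximal} selection, and then invokes \cite[Lemma 6.4]{garcia-ferreira-gutev-nogura-sanchis-tomita:99}, which is exactly the device that absorbs the discontinuity of $T\mapsto T\cap Z_\lambda$ by pairing a $q_\lambda$-maximal selection on one piece with the $q_\lambda$-minimal $g_\lambda$ on the other. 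If you want to keep the direct pointwise approach, you would essentially have to reprove that gluing lemma inside the missing sub-case. A secondary point: you read \eqref{eq:Min-Sel-v35:3} literally as $f(S)=g_\mu\bigl(\eta^{-1}(\mu)\bigr)$ in one place and as $g_\mu\bigl(S\cap\eta^{-1}(\mu)\bigr)$ in another; only the latter reading makes $f$ a selection (and it is the one used throughout the paper's proof), so settle on it before relying on $f(T)\in T$.
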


\begin{proof}
  For each $\beta\leq \gamma$, set
  $f_\beta=f\uhr \mathscr{F}\left(\eta^{-1}([0,\beta])\right)
  =\bigvee_{\alpha\leq \beta}g_\alpha$.  Next, take
  $\lambda\leq \gamma$, and assume that $f_\alpha$ is continuous for
  every $\alpha<\lambda$. If $\lambda=\alpha+1$, then the sets
  $\eta^{-1}([0,\alpha])$ and $\eta^{-1}(\lambda)$ are clopen,
  therefore $f_\lambda$ is continuous because so are $f_\alpha$ and
  $g_\lambda=g_{\alpha+1}$. Suppose that $\lambda$ is a limit ordinal,
  and set $Y_\lambda=\eta^{-1}([0,\lambda))\cup\{q_\lambda\}$,
  $Z_\lambda=\eta^{-1}(\lambda)$ and
  $X_\lambda=\eta^{-1}([0,\lambda])$. Then $Y_\lambda$ is also a
  closed set because $Z_\lambda$ is clopen modulo the point
  $q_\lambda$. A crucial role in this proof will play the selection
  ${h_\lambda=f_\lambda\uhr \mathscr{F}(Y_\lambda)}$. Namely, since
  $f_\lambda=f\uhr \mathscr{F}(X_\lambda)$, it follows from
  \eqref{eq:Min-Sel-v35:3} that
  \begin{equation}
    \label{eq:Ord-Dec-vgg:1}
      f_\lambda(S)=
  \begin{cases}
    h_\lambda(S) &\text{if $S\in \mathscr{F}(Y_\lambda)$,\ \ and} \\
    g_\lambda(S\cap Z_\lambda) &\text{if
      $S\cap Z_\lambda\in \mathscr{F}(Z_\lambda)$.}
  \end{cases}
  \end{equation}
  The proof now consists of showing that $h_\lambda$ is continuous, so
  let us take $S\in \mathscr{F}(Y_\lambda)$. If $q_\lambda\notin S$,
  then $\eta(S)\subset [0,\alpha]$ for some $\alpha<\lambda$ because
  $\eta$ is a closed map. Since $\eta^{-1}([0,\alpha])$ is a clopen
  set and $f_\alpha=h_\lambda\uhr \mathscr{F}(\eta^{-1}([0,\alpha]))$
  is continuous, $h_\lambda$ is also continuous at $S$. Otherwise, if
  $q_\lambda\in S$, we have that $S\cap Z_\lambda=\{q_\lambda\}$ and
  by \eqref{eq:Ord-Dec-vgg:1} we obtain that
  $h_\lambda(S)=f_\lambda(S)=q_\lambda$. Now, take an open set
  $U\subset Y_\lambda$ with $q_\lambda\in U$. Since
  $\eta^{-1}(\lambda)$ is clopen modulo the point $q_\lambda$, the set
  $V=U\cup \eta^{-1}(\lambda)$ is open in $X$. Hence,
  $\eta^{-1}((\alpha,\lambda])\subset V$ for some $\alpha<\lambda$
  because the map $\eta$ is closed.  Thus,
  $U_\alpha=\eta^{-1}((\alpha,\lambda])\cap Y_\lambda$ is open in
  $Y_\lambda$ such that $S\in \langle U_\alpha,Y_\lambda\rangle$ and
  $h_\lambda(T)\in U$ for every
  $T\in \langle U_\alpha,Y_\lambda\rangle$, see
  \eqref{eq:Min-Sel-v35:6} and \eqref{eq:Min-Sel-v35:3}. This
  completes the verification that $h_\lambda$ is continuous, and as is
  evident from this verification, $h_\lambda$ is in fact a
  $q_\lambda$-maximal selection for $\mathscr{F}(Y_\lambda)$. Since
  $g_\lambda$ is a $q_\lambda$-minimal selection for
  $\mathscr{F}(Z_\lambda)$, according to \eqref{eq:Ord-Dec-vgg:1} and
  \cite[Lemma 6.4]{garcia-ferreira-gutev-nogura-sanchis-tomita:99}
  (see also \cite[Proposition 3.5]{gutev-nogura:08b}), this implies
  continuity of $f_\lambda$. Thus, by transfinite induction,
  $f=f_\gamma$ is continuous as well.
\end{proof}

In contrast to \eqref{eq:Min-Sel-v35:2}, the construction in
\eqref{eq:Min-Sel-v35:4} is valid for any nonempty subset of the
ordinal space $[0,\gamma]$ being equivalent to the property that
$[0,\gamma]$ is well ordered. Motivated by this, we shall say that a
subset $\mathscr{D}\subset \Delta(X)$ is a \emph{quasi-ordinal
  decomposition} of a space $X$ if $\mathscr{D}$ is a decomposition of
$X$ such that the decomposition space $\mathscr{D}$ has a coarser
topology with respect to which it is a compact ordinal space
$[0,\gamma]$. Thus, identifying $\mathscr{D}$ with the set
$[0,\gamma]$, a quasi-ordinal decomposition $\mathscr{D}$ of $X$ is
identical to a continuous map $\eta:X\to [0,\gamma]$ with
${\mathscr{D}=\left\{\eta^{-1}(\alpha): \alpha\leq
  \gamma\right\}\subset \Delta(X)}$. In this setting, following
\eqref{eq:Min-Sel-v35:4}, for a quasi-ordinal decomposition
$\eta:X\to [0,\gamma]$ of a space $X$ and selections
$g_\alpha\in \sel[\mathscr{F}(\eta^{-1}(\alpha))]$,
$\alpha\leq \gamma$, we will associate the selection
$f=\bigwedge_{\alpha\leq \gamma} g_\alpha$ for $\mathscr{F}(X)$
defined by
\begin{equation}
  \label{eq:Min-Sel-v35:5}
  f(S)=g_{\min \eta(S)}\left(\eta^{-1}(\min \eta(S))\right)\quad
  \text{for every $S\in 
    \mathscr{F}(X)$.} 
\end{equation}

We now have the following result which is complementary to Theorem
\ref{theorem-Min-Sel-v35:1}.   

\begin{theorem}
  \label{theorem-Min-Sel-v35:2}
  Let $\eta:X\to [0,\gamma]$ be a quasi-ordinal decomposition of a
  space $X$, and $g_\alpha\in \sel[\mathscr{F}(\eta^{-1}(\alpha))]$,
  $\alpha\leq \gamma$. If for each limit ordinal $\lambda\leq\gamma$,
  the selection $g_\lambda$ is $q_\lambda$-maximal, where
  $\eta^{-1}(\lambda)$ is clopen modulo the point
  $q_\lambda\in \eta^{-1}(\lambda)$, then the selection
  $f=\bigwedge_{\alpha\leq\gamma}g_\alpha$ defined as in
  \eqref{eq:Min-Sel-v35:5} is continuous.
\end{theorem}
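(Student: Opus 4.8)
The plan is to mirror the proof of Theorem~\ref{theorem-Min-Sel-v35:1}, exchanging the roles of $\max$ and $\min$ (hence of $\gamma$-maximal and $\gamma$-minimal selections) and, correspondingly, the roles of $p$-maximal and $p$-minimal selections at the limit stages. As before, set $f_\beta = f\uhr \mathscr{F}\bigl(\eta^{-1}([0,\beta])\bigr) = \bigwedge_{\alpha\le\beta} g_\alpha$ for $\beta\le\gamma$, and argue by transfinite induction that each $f_\beta$ is continuous. The successor step is identical: if $\lambda=\alpha+1$, then $\eta^{-1}([0,\alpha])$ and $\eta^{-1}(\lambda)$ are clopen (here one must first note that for a quasi-ordinal decomposition the preimage of any initial segment $[0,\alpha]$ is still closed, and the preimage of a point $\eta^{-1}(\lambda)$ lies in $\Delta(X)$, so it is closed too; being complements of each other inside the clopen set $\eta^{-1}([0,\lambda])$ they are both clopen in it), and continuity of $f_\lambda$ follows from that of $f_\alpha$ and $g_\lambda$.

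The limit step is where the $\min$/$\max$ swap matters. Fix a limit ordinal $\lambda\le\gamma$ and put $Z_\lambda = \eta^{-1}(\lambda)$, clopen modulo $q_\lambda$, $X_\lambda = \eta^{-1}([0,\lambda])$, and $Y_\lambda = \eta^{-1}([0,\lambda))\cup\{q_\lambda\}$; as in the previous proof $Y_\lambda$ is closed in $X$ because $Z_\lambda\setminus\{q_\lambda\}$ is open. Let $h_\lambda = f_\lambda\uhr\mathscr{F}(Y_\lambda)$. From \eqref{eq:Min-Sel-v35:5} one gets the analogue of \eqref{eq:Ord-Dec-vgg:1}: for $S\in\mathscr{F}(X_\lambda)$, if $S\not\subset Z_\lambda$ then $\min\eta(S)<\lambda$ so $f_\lambda(S)=h_\lambda(S\cap Y_\lambda)$ (note $S\cap Y_\lambda = S\setminus\bigl(Z_\lambda\setminus\{q_\lambda\}\bigr)$ is closed and nonempty, and $\min\eta$ of it equals $\min\eta(S)$), while if $S\subset Z_\lambda$ then $f_\lambda(S)=g_\lambda(S)$. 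The key claim is that $h_\lambda$ is a $q_\lambda$-\emph{minimal} continuous selection for $\mathscr{F}(Y_\lambda)$. Continuity at an $S$ with $q_\lambda\notin S$ is handled exactly as before: by continuity of $\eta$ one has $\min\eta(S)=\alpha<\lambda$, hence $S\subset\eta^{-1}([\alpha,\lambda))\subset\eta^{-1}((\alpha',\lambda))$-type arguments; more simply, $S\in\mathscr{F}(\eta^{-1}((\alpha',\lambda])\cap Y_\lambda)$ — one must be a little careful here since $\eta^{-1}([0,\alpha])$ need not be open for a quasi-ordinal decomposition, so instead one uses that $\eta^{-1}((\alpha',\lambda])$ is open for $\alpha'<\lambda$ and works inside it, where $f$ restricted is $\bigwedge$ of the relevant $g$'s and is continuous by induction applied to the sub-decomposition. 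For $S$ with $q_\lambda\in S$, since $q_\lambda$ is the unique point of $S\cap Z_\lambda$ and $\eta(q_\lambda)=\lambda=\max[0,\lambda]$, minimality forces $h_\lambda(S)=\min\eta(S)$'s selection value; but $q_\lambda\in S$ does \emph{not} force $\min\eta(S)=\lambda$ unless $S=\{q_\lambda\}$, so here $h_\lambda(S)\ne q_\lambda$ whenever $S\ne\{q_\lambda\}$, i.e.\ $h_\lambda$ is $q_\lambda$-minimal; one checks continuity at $\{q_\lambda\}$ using that any open $U\ni q_\lambda$ in $Y_\lambda$ gives $V=U\cup Z_\lambda$ open in $X$, so by continuity of $\eta$ there is $\alpha<\lambda$ with $\eta^{-1}((\alpha,\lambda])\subset V$, and then $\langle U_\alpha, Y_\lambda\rangle$ with $U_\alpha = \eta^{-1}((\alpha,\lambda])\cap Y_\lambda$ is a neighbourhood of $\{q_\lambda\}$ mapped into $U$ by $h_\lambda$.

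Finally, combine: $h_\lambda$ is a $q_\lambda$-minimal continuous selection for $\mathscr{F}(Y_\lambda)$, $g_\lambda$ is by hypothesis a $q_\lambda$-maximal continuous selection for $\mathscr{F}(Z_\lambda)$, and $Y_\lambda\cap Z_\lambda=\{q_\lambda\}$ with $Y_\lambda\cup Z_\lambda = X_\lambda$ and $Y_\lambda$, $Z_\lambda$ closed. By \eqref{eq:Ord-Dec-vgg:1}'s analogue together with \cite[Lemma~6.4]{garcia-ferreira-gutev-nogura-sanchis-tomita:99} (or \cite[Proposition~3.5]{gutev-nogura:08b}), the selection $f_\lambda$ obtained by gluing a point-minimal selection on one side to a point-maximal selection on the other along the common point is continuous. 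Transfinite induction then yields continuity of $f=f_\gamma$.

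The main obstacle, and the one place where this proof genuinely differs in substance rather than in notation, is that for a quasi-ordinal decomposition the map $\eta$ is only continuous, not closed, so the initial segments $\eta^{-1}([0,\alpha])$ need not be clopen; the previous proof used clopenness of $\eta^{-1}([0,\alpha])$ in two spots (the successor step and continuity of $h_\lambda$ away from $q_\lambda$). The fix is that only the \emph{final} segments $\eta^{-1}((\alpha,\lambda])$ and $\eta^{-1}(\{\lambda\})=Z_\lambda$ need good topological behaviour here, and these are fine: $\eta^{-1}((\alpha,\lambda])$ is open by continuity of $\eta$, and $Z_\lambda\in\Delta(X)$ is closed. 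One must rewrite the ``$q_\lambda\notin S$'' case so that it never appeals to openness of an initial segment — instead localising to the open set $\eta^{-1}((\alpha,\lambda])$ and invoking the induction hypothesis on the restricted quasi-ordinal decomposition. Once this bookkeeping is done, everything else is a faithful dualisation of the proof of Theorem~\ref{theorem-Min-Sel-v35:1}.
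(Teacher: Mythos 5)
Your overall architecture is exactly the paper's: the same $f_\beta$, the same transfinite induction, the same closed set $Y_\lambda=\eta^{-1}([0,\lambda))\cup\{q_\lambda\}$, the same reduction to showing that $h_\lambda=f_\lambda\uhr\mathscr{F}(Y_\lambda)$ is a continuous $q_\lambda$-minimal selection, and the same gluing of $h_\lambda$ with the $q_\lambda$-maximal $g_\lambda$ via \cite[Lemma 6.4]{garcia-ferreira-gutev-nogura-sanchis-tomita:99}. However, the one place where you deliberately depart from that template\,---\,your ``main obstacle'' and its fix\,---\,rests on a false premise and the substitute argument does not work as written. You claim that for a quasi-ordinal decomposition the initial segments $\eta^{-1}([0,\alpha])$ need not be clopen because $\eta$ is only continuous. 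But $[0,\alpha]=[0,\alpha+1)$ is clopen in the ordinal space $[0,\gamma]$, so $\eta^{-1}([0,\alpha])$ is clopen under mere continuity of $\eta$; closedness of $\eta$ is irrelevant here. Consequently your proposed workaround\,---\,``localising to the open set $\eta^{-1}((\alpha,\lambda])$ and invoking the induction hypothesis on the restricted quasi-ordinal decomposition''\,---\,is both unnecessary and broken: the induction hypothesis concerns the selections $f_\beta=\bigwedge_{\alpha\leq\beta}g_\alpha$ on \emph{initial}-segment preimages and says nothing about min-type selections on final-segment preimages; moreover, for the $\bigwedge$-construction the value at $S$ is governed by the \emph{lowest} layer that $S$ meets, which a final-segment localisation does not control. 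The correct (and the paper's) argument is the opposite of what you propose: for $S\neq\{q_\lambda\}$ pick $\alpha<\lambda$ with $S\cap\eta^{-1}(\alpha)\neq\emptyset$; then $H=\eta^{-1}([0,\alpha])$ is clopen, $h_\lambda(S)=f_\alpha(S\cap H)$, this identity persists on the Vietoris-open set $\langle H,Y_\lambda\rangle$, and $T\mapsto T\cap H$ is continuous there, so continuity of $h_\lambda$ at $S$ follows from continuity of $f_\alpha$.

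A secondary slip: you assert that ``by continuity of $\eta$ there is $\alpha<\lambda$ with $\eta^{-1}((\alpha,\lambda])\subset V$'' for any open $V\supset\eta^{-1}(\lambda)$. That is precisely the upper semi-continuity of the decomposition, i.e.\ closedness of $\eta$, which quasi-ordinal decompositions are \emph{not} assumed to have; it is the property that separates Theorem \ref{theorem-Min-Sel-v35:1} from Theorem \ref{theorem-Min-Sel-v35:2}. Fortunately you only invoke it to get continuity of $h_\lambda$ at the singleton $\{q_\lambda\}$, which is automatic (any selection is continuous at singletons, since $\langle U\rangle$ is a Vietoris neighbourhood of $\{q_\lambda\}$ mapped into $U$), so this error is harmless. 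Once you replace the final-segment detour by the clopen-initial-segment argument and drop the unwarranted closedness claim, your proof coincides with the one in the paper.
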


\begin{proof}
  As before, set
  $f_\beta=f\uhr \mathscr{F}\left(\eta^{-1}([0,\beta])\right)
  =\bigwedge_{\alpha\leq \beta}g_\alpha$ for each $\beta\leq \gamma$.
  Next, take $\lambda\leq \gamma$, and assume that $f_\alpha$ is
  continuous for every $\alpha<\lambda$. If $\lambda=\alpha+1$, as in
  the proof of Theorem \ref{theorem-Min-Sel-v35:1}, $f_\lambda$ is
  also continuous. Suppose that $\lambda$ is a limit ordinal, and set
  $Y_\lambda=\eta^{-1}([0,\lambda))\cup\{q_\lambda\}$,
  $Z_\lambda=\eta^{-1}(\lambda)$ and
  $X_\lambda=\eta^{-1}([0,\lambda])$. Since $Z_\lambda$ is clopen
  modulo the point $q_\lambda$, the set $Y_\lambda$ is closed, and the
  proof now consists of showing that
  $h_\lambda=f_\lambda\uhr \mathscr{F}(Y_\lambda)$ is a
  $q_\lambda$-minimal selection for
  $\mathscr{F}(Y_\lambda)$. Evidently, by \eqref{eq:Min-Sel-v35:5},
  $h_\lambda(S)=q_\lambda$ precisely when $S=\{q_\lambda\}$. Moreover,
  $h_\lambda$ is continuous at $\{q_\lambda\}$ because each selection
  is continuous at the singletons. Finally, take
  $S\in \mathscr{F}(Y_\lambda)$ with $S\neq \{q_\lambda\}$, and let
  $\alpha<\lambda$ be such that
  $S\cap \eta^{-1}(\alpha)\neq \emptyset$. Then by
  \eqref{eq:Min-Sel-v35:5}, the clopen set $H=\eta^{-1}([0,\alpha])$
  is such that $h_\lambda(S)=f_\alpha(S\cap H)$. This implies that
  $h_\lambda$ is continuous at $S$ because $f_\alpha$ is continuous at
  $S\cap H$. Thus, precisely as this was done in the proof of Theorem
  \ref{theorem-Min-Sel-v35:1}, $f_\lambda$ is continuous because
  \[
    f_\lambda(S)=
    \begin{cases}
      h_\lambda(S\cap Y_\lambda) &\text{if $S\cap Y_\lambda\in
        \mathscr{F}(Y_\lambda)$, and} \\ 
      g_\lambda(S) &\text{if $S\in \mathscr{F}(Z_\lambda)$.}
    \end{cases}\qedhere
  \]
\end{proof}

Theorems \ref{theorem-Min-Sel-v35:1} and \ref{theorem-Min-Sel-v35:2}
have an almost identical proof based on \cite[Lemma
6.4]{garcia-ferreira-gutev-nogura-sanchis-tomita:99}. Regarding this
lemma, for a set $H\subset X$ clopen modulo a point $q\in H$, define a
map $\varphi:\langle H,X\rangle\to \mathscr{F}(H)$ by
$\varphi(S)=(S\cap H)\cup\{q\}$ for every $S\in \langle H, X\rangle$,
see \eqref{eq:Min-Sel-v35:6}.  Then a very simple argument, given in
\cite[Proposition
6.1]{garcia-ferreira-gutev-nogura-sanchis-tomita:99}, shows that
$\varphi$ is Vietoris continuous. This map is instrumental to glue
selections at a point $q\in X$, as done in \cite[Lemma
6.4]{garcia-ferreira-gutev-nogura-sanchis-tomita:99}.\medskip

In the setting of Theorem \ref{theorem-Min-Sel-v35:1}, if
$\eta^{-1}(\gamma)=\{p\}$ for some point $p\in X$, then the resulting
selection $f=\bigvee_{\alpha\leq\gamma}g_\alpha$ is
$p$-maximal. Similarly, the selection
$f=\bigwedge_{\alpha\leq\gamma}g_\alpha$ in Theorem
\ref{theorem-Min-Sel-v35:2} is $p$-minimal whenever
$\eta^{-1}(\gamma)=\{p\}$.  Such decompositions are not as arbitrary
as it might seem at first glance.

\begin{proposition}
  \label{proposition-Ordinal-Decomp-2nd:1}
  For a space $X$, a non-isolated point $p\in X$ and a decomposition
  $\mathscr{D}\subset \Delta(X)$ with $\{p\}\in \mathscr{D}$, the
  following holds.
  \begin{enumerate}[itemsep=2pt, label=\upshape{(\roman*)}]
  \item\label{item:Ordinal-Decomp-2nd:1} If $\mathscr{D}$ is
    quasi-ordinal, then the singleton $\{p\}$ is an intersection of at
    most $|\mathscr{D}|$ many clopen sets.
  \item\label{item:Ordinal-Decomp-2nd:2} If $\mathscr{D}$ is ordinal,
    then $p$ has a clopen local base of cardinality at most
    $|\mathscr{D}|$.
  \end{enumerate}
\end{proposition}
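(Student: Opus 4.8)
The plan is to work with the map $\eta\colon X\to[0,\gamma]$ defining the decomposition $\mathscr{D}=\{\eta^{-1}(\alpha):\alpha\le\gamma\}$, to write $\{p\}=\eta^{-1}(\delta)$ for the unique $\delta\le\gamma$, and to obtain every clopen set that is needed as the $\eta$-preimage of a suitable clopen subinterval of $[0,\gamma]$. The one structural observation driving everything is that $\delta$ must be a \emph{limit} ordinal: since $p$ is not isolated, $\{p\}$ is not open in $X$, so --- $\eta$ being continuous in both cases --- the singleton $\{\delta\}$ is not open in $[0,\gamma]$, which forces $\delta$ to be a limit. In particular $\delta\ge\omega$, hence $|\delta|$ is infinite and $|\delta|=|\delta|+1\le|[0,\gamma]|=|\mathscr{D}|$; this is the cardinal bound that will appear in both items.

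For \ref{item:Ordinal-Decomp-2nd:1} I would use that for every $\alpha<\delta$ the interval $(\alpha,\gamma]$ is clopen in $[0,\gamma]$, and so is $[0,\delta]$; since $\eta$ is continuous --- which is all that a quasi-ordinal decomposition provides --- the sets $\eta^{-1}((\alpha,\gamma])$, $\alpha<\delta$, and $\eta^{-1}([0,\delta])$ are clopen in $X$. As $\delta$ is a limit, $\bigcap_{\alpha<\delta}(\alpha,\gamma]=[\delta,\gamma]$, so
\[
  \eta^{-1}([0,\delta])\cap\bigcap_{\alpha<\delta}\eta^{-1}\bigl((\alpha,\gamma]\bigr)
  =\eta^{-1}\bigl([0,\delta]\cap[\delta,\gamma]\bigr)=\eta^{-1}(\delta)=\{p\},
\]
which exhibits $\{p\}$ as an intersection of at most $|\delta|+1\le|\mathscr{D}|$ clopen subsets of $X$.

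For \ref{item:Ordinal-Decomp-2nd:2} the candidate clopen local base at $p$ is $\{\eta^{-1}((\alpha,\delta]):\alpha<\delta\}$: each $(\alpha,\delta]=(\alpha,\gamma]\cap[0,\delta]$ is clopen in $[0,\gamma]$ and contains $\delta$, so each $\eta^{-1}((\alpha,\delta])$ is a clopen neighbourhood of $p$, and there are at most $|\delta|\le|\mathscr{D}|$ of them. To see that these are cofinal among the neighbourhoods of $p$, I would invoke that for an ordinal decomposition $\eta$ is a \emph{closed} map: given open $O\ni p$, the set $\eta(X\setminus O)$ is closed in $[0,\gamma]$ and, since $\eta^{-1}(\delta)=\{p\}\subseteq O$, it avoids $\delta$, so $V=[0,\gamma]\setminus\eta(X\setminus O)$ is an open neighbourhood of $\delta$ with $\eta^{-1}(V)\subseteq O$; as $\delta$ is a limit, $(\alpha,\delta]\subseteq V$ for some $\alpha<\delta$, whence $\eta^{-1}((\alpha,\delta])\subseteq\eta^{-1}(V)\subseteq O$. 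I expect no genuine obstacle here: once $\delta$ is seen to be a limit ordinal, the rest is bookkeeping, the only points requiring a little care being the routine verification that the relevant half-open intervals of $[0,\gamma]$ are clopen in the order topology and the standard ``shrinking'' property of closed maps --- which is exactly where the stronger hypothesis ``ordinal'' (as opposed to merely ``quasi-ordinal'') is used, and why \ref{item:Ordinal-Decomp-2nd:2} genuinely improves on \ref{item:Ordinal-Decomp-2nd:1}.
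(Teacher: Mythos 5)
Your proof is correct and follows essentially the same route as the paper: identify $\{p\}=\eta^{-1}(\lambda)$ for a limit ordinal $\lambda$ (forced by continuity of $\eta$ and non-isolatedness of $p$), use $\{\lambda\}=\bigcap_{\alpha<\lambda}(\alpha,\lambda]$ together with continuity of $\eta$ for the first item, and closedness of $\eta$ for the second. The paper's proof is merely a more compressed version of exactly this argument.
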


\begin{proof}
  If $\mathscr{D}$ is as in \ref{item:Ordinal-Decomp-2nd:1}, then it
  is identical to a continuous map $\eta:X\to [0,\gamma]$ such that
  $\mathscr{D}=\left\{\eta^{-1}(\alpha): \alpha\leq \gamma\right\}$
  and $\eta^{-1}(\lambda)=\{p\}$ for some limit ordinal
  $\lambda\leq\gamma$. Moreover, in case of
  \ref{item:Ordinal-Decomp-2nd:2}, the map $\eta$ is also closed. The
  property now follows from the fact that
  $\{p\}=\bigcap_{\alpha<\lambda}(\alpha,\lambda]$.
\end{proof}

Complementary to Proposition \ref{proposition-Ordinal-Decomp-2nd:1} is
the following simple observation.

\begin{proposition}
  \label{proposition-Min-Sel-v35:2}
  For a space $X$ and a non-isolated point $p\in X$, the following
  holds.
  \begin{enumerate}[label=\upshape{(\roman*)}]
  \item\label{item:Ordinal-Decomp-2nd:3} If the singleton $\{p\}$ is a
    countable intersection of clopen sets, then $X$ has a
    quasi-ordinal decomposition $\eta:X\to [0,\omega]$ with
    $\eta^{-1}(\omega)=\{p\}$.
  \item\label{item:Ordinal-Decomp-2nd:4} If $X$ has a countable clopen
    base at $p$, then it also has an ordinal decomposition
    $\eta:X\to [0,\omega]$ with $\eta^{-1}(\omega)=\{p\}$.
  \end{enumerate}
\end{proposition}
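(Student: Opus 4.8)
The plan is to treat both parts with a single explicit construction, the stronger hypothesis in (ii) being exactly what upgrades the map produced in (i) from merely continuous to closed. For (i) I would start from clopen sets $U_n$, $n<\omega$, with $\bigcap_n U_n=\{p\}$, replace $U_n$ by $U_0\cap\dots\cap U_n$ and prepend $X$ so as to make the sequence decreasing with first term $X$, and then pass to a strictly decreasing subsequence; this leaves the intersection unchanged, and since $p$ is not isolated the sequence cannot stabilize, so the subsequence is infinite. After relabelling I obtain clopen sets $X=V_0\supsetneq V_1\supsetneq\cdots$ with $\bigcap_n V_n=\{p\}$. For (ii) I would run the same reduction on a countable clopen local base at $p$: in a Hausdorff space such a base has intersection $\{p\}$, replacing the $n$-th member by the intersection of the first $n$ members gives a decreasing clopen local base, and a strictly decreasing subsequence of a decreasing local base is again a local base; so I again obtain $X=V_0\supsetneq V_1\supsetneq\cdots$ clopen with $\bigcap_n V_n=\{p\}$, now with the additional feature that $\{V_n:n<\omega\}$ is a clopen local base at $p$.

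In both cases I would then define $\eta:X\to[0,\omega]$ by $\eta(x)=n$ when $x\in V_n\setminus V_{n+1}$ and $\eta(p)=\omega$. Since $V_0=X$ and $\bigcap_n V_n=\{p\}$, the sets $V_n\setminus V_{n+1}$ partition $X\setminus\{p\}$, and each is nonempty because the inclusions are strict; hence $\eta$ is a well-defined surjection, $\eta^{-1}(n)=V_n\setminus V_{n+1}$ is nonempty clopen for each $n<\omega$, and $\eta^{-1}(\omega)=\{p\}$, so $\left\{\eta^{-1}(\alpha):\alpha\le\omega\right\}$ is a decomposition of $X$ contained in $\Delta(X)$, since $\Delta(X)$ contains all nonempty clopen sets and all singletons. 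For continuity I would note that $\eta^{-1}(\{n\})=V_n\setminus V_{n+1}$ and $\eta^{-1}((m,\omega])=V_{m+1}$ are open, the latter because $V_{m+1}\setminus\{p\}=\bigcup_{n\ge m+1}(V_n\setminus V_{n+1})$; as these sets form a base of $[0,\omega]$, the map $\eta$ is continuous. By the description of quasi-ordinal decompositions recalled above, this already proves (i).

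For (ii) it remains to show that $\eta$ is closed, since a continuous closed surjection onto $[0,\omega]$ is in particular a quotient closed map, hence an ordinal decomposition. Let $C\subseteq X$ be closed. If $p\in C$, then $\omega\in\eta(C)$, and every subset of $[0,\omega]$ containing $\omega$ is closed. If $p\notin C$, then $X\setminus C$ is a neighbourhood of $p$, so $V_n\subseteq X\setminus C$ for some $n$, whence $C\subseteq X\setminus V_n=\bigcup_{k<n}(V_k\setminus V_{k+1})$ and $\eta(C)\subseteq\{0,\dots,n-1\}$ is finite, hence closed. Either way $\eta(C)$ is closed, and (ii) follows.

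The construction itself is routine; the two points that deserve care are the reduction to a strictly decreasing sequence, where non-isolatedness of $p$ is precisely what prevents the sequence from stabilizing and so forces the resulting ordinal to be exactly $\omega$, and, for (ii), the verification that $\eta$ is closed, which is exactly the place where one needs that the $V_n$ form a local base at $p$ rather than merely satisfy $\bigcap_n V_n=\{p\}$; with only the latter property one still gets a continuous $\eta$, but in general not a closed one, as in part (i).
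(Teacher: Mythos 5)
Your proof is correct and follows essentially the same route as the paper: the paper's own (very terse) argument likewise reduces to a strictly decreasing sequence of clopen sets $U_0=X\supsetneq U_1\supsetneq\cdots$ with $\bigcap_n U_n=\{p\}$, defines $\eta$ via $U_n=\eta^{-1}([n,\omega])$, and observes that $\eta$ is an ordinal (rather than merely quasi-ordinal) decomposition precisely when the $U_n$ form a local base at $p$. You have simply written out the details (extraction of the strictly decreasing subsequence from non-isolatedness, continuity, and closedness) that the paper leaves implicit.
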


\begin{proof}
  The condition in \ref{item:Ordinal-Decomp-2nd:3} implies the
  existence of a family of clopen sets $U_n$, $n<\omega$, such that
  $U_0=X$, $U_{n+1}\subsetneq U_n$, $n<\omega$, and
  $\bigcap_{n<\omega}U_n=\{p\}$.  This is equivalent to the existence
  of a continuous surjective map $\eta:X\to [0,\omega]$ with
  $\eta^{-1}(\omega)=\{p\}$, the relationship is simply given by
  $U_n=\eta^{-1}([n,\omega])$, $n<\omega$. Moreover, $\eta$ is an
  ordinal decomposition precisely when the sets $U_n$, $n<\omega$,
  form a (clopen) base at $p\in X$, i.e.\ in the case of
  \ref{item:Ordinal-Decomp-2nd:4}.
\end{proof}

Proposition \ref{proposition-Min-Sel-v35:2} implies that the following
construction in the proof of \cite[Theorem 1.4]{gutev-nogura:00b} is a
special case of Theorem \ref{theorem-Min-Sel-v35:1}.

\begin{corollary}
  \label{corollary-Min-Sel-v35:1}
  Let $X$ be a space with $\sel[\mathscr{F}(X)]\neq \emptyset$, and
  $p\in X$ be a point which has a countable clopen base. Then
  $\mathscr{F}(X)$ has a $p$-maximal selection. 
\end{corollary}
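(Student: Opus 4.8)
The plan is to deduce Corollary~\ref{corollary-Min-Sel-v35:1} from Theorem~\ref{theorem-Min-Sel-v35:1} together with Proposition~\ref{proposition-Min-Sel-v35:2}\ref{item:Ordinal-Decomp-2nd:4}, so the real content is reduced to the two hypotheses needed to invoke that theorem: an ordinal decomposition $\eta\colon X\to[0,\omega]$ with $\eta^{-1}(\omega)=\{p\}$, and a compatible family of selections on the fibres, in particular a $q_\omega$-minimal selection on the single limit fibre $\eta^{-1}(\omega)$.

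First I would observe that we may assume $p$ is not isolated: if $p$ is isolated, then $\{p\}$ is clopen, $X\setminus\{p\}$ is a clopen subspace with a continuous selection (the restriction of any $f\in\sel[\mathscr{F}(X)]$, which is nonempty by hypothesis), and one obtains a $p$-maximal selection for $\mathscr{F}(X)$ directly by declaring $f(S)=p$ whenever $p\in S$ and using the given selection on $X\setminus\{p\}$ otherwise; continuity is routine because $\langle X\setminus\{p\}\rangle$ and $\langle\{p\},X\rangle\cup\langle\{p\}\rangle$ give a clopen partition of $\mathscr{F}(X)$. Granting $p$ non-isolated, Proposition~\ref{proposition-Min-Sel-v35:2}\ref{item:Ordinal-Decomp-2nd:4} applied to the countable clopen base at $p$ yields an ordinal decomposition $\eta\colon X\to[0,\omega]$ with $\eta^{-1}(\omega)=\{p\}$, and concretely $\eta^{-1}([n,\omega])=U_n$ for a decreasing sequence of clopen neighbourhoods $U_n$ of $p$ forming a base, so each fibre $\eta^{-1}(n)=U_n\setminus U_{n+1}$ ($n<\omega$) is clopen.

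Next I would produce the fibrewise selections $g_\alpha\in\sel[\mathscr{F}(\eta^{-1}(\alpha))]$ for $\alpha\leq\omega$. Since $\sel[\mathscr{F}(X)]\neq\emptyset$, fix $f_0\in\sel[\mathscr{F}(X)]$; for each $n<\omega$ the set $\eta^{-1}(n)$ is clopen in $X$, hence $\mathscr{F}(\eta^{-1}(n))$ embeds as a clopen (in particular open) subspace of $\mathscr{F}(X)$, and the restriction of $f_0$ gives $g_n\in\sel[\mathscr{F}(\eta^{-1}(n))]$. For the limit fibre $\eta^{-1}(\omega)=\{p\}$ there is only the trivial selection $g_\omega(\{p\})=p$, and this selection is vacuously $p$-minimal because there is no $S\in\mathscr{F}(\{p\})$ with $S\neq\{p\}$; moreover $\eta^{-1}(\omega)=\{p\}$ is clopen modulo the point $p$ (every singleton lies in $\Delta(X)$, as noted after~\eqref{eq:Min-Sel-v37:1}), so we may take $q_\omega=p$. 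The only limit ordinal $\leq\omega$ is $\omega$ itself, so the hypothesis of Theorem~\ref{theorem-Min-Sel-v35:1} is met, and $f=\bigvee_{\alpha\leq\omega}g_\alpha$ is a continuous selection for $\mathscr{F}(X)$. Finally, since $\eta^{-1}(\gamma)=\eta^{-1}(\omega)=\{p\}$, the remark immediately preceding the corollary (that $f$ is then $p$-maximal) finishes the argument: for any $S$ with $p\in S$ we have $\max\eta(S)=\omega$, hence $f(S)=g_\omega(\{p\})=p$.

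I do not anticipate a genuine obstacle here — the statement is essentially a packaging of earlier results — but the one point requiring a little care is making the non-isolated reduction clean, and more importantly being explicit that the degenerate limit fibre $\{p\}$ supplies the needed $q_\lambda$-minimal selection trivially; that is precisely the device that lets Theorem~\ref{theorem-Min-Sel-v35:1} apply with no real work at the single limit stage. One should also double-check that the convention $[0,\omega]$ versus $\omega+1$ is being used consistently with the paper's stated choice, but this is purely notational.
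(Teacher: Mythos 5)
Your proposal is correct and follows essentially the same route as the paper: dispose of the isolated case directly, use Proposition \ref{proposition-Min-Sel-v35:2}\ref{item:Ordinal-Decomp-2nd:4} to get an ordinal decomposition $\eta\colon X\to[0,\omega]$ with $\eta^{-1}(\omega)=\{p\}$, restrict a given continuous selection to the clopen fibres, and observe that the unique selection on the singleton limit fibre is vacuously $p$-minimal so that Theorem \ref{theorem-Min-Sel-v35:1} applies and yields a $p$-maximal selection. The paper's proof is just a two-line citation of these same ingredients; you have merely written out the details it leaves implicit.
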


\begin{proof}
  The trivial case is when $p\in X$ is an isolated point. Otherwise,
  the property follows from Theorem \ref{theorem-Min-Sel-v35:1} and
  Proposition \ref{proposition-Min-Sel-v35:2}. 
\end{proof}

Similarly, Theorem \ref{theorem-Min-Sel-v35:2} contains the following
construction of $p$-minimal selections in \cite[Proposition
3.6]{gutev-nogura:08b}, see also \cite[Lemma
6.3]{garcia-ferreira-gutev-nogura-sanchis-tomita:99} and
\cite[Proposition 4.2]{Gutev2022a}.

\begin{corollary}
  \label{corollary-Min-Sel-v35:2}
  Let $X$ be a space with $\sel[\mathscr{F}(X)]\neq \emptyset$, and
  $p\in X$ be such that $\{p\}$ is a countable intersection of clopen
  subsets of $X$. Then $\mathscr{F}(X)$ has a $p$-minimal
  selection. 
\end{corollary}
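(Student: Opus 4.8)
The plan is to reduce Corollary \ref{corollary-Min-Sel-v35:2} to Theorem \ref{theorem-Min-Sel-v35:2} in exactly the same way Corollary \ref{corollary-Min-Sel-v35:1} was reduced to Theorem \ref{theorem-Min-Sel-v35:1}, invoking Proposition \ref{proposition-Min-Sel-v35:2}\,\ref{item:Ordinal-Decomp-2nd:3}.

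First I would dispose of the trivial case: if $p\in X$ is isolated, then no $S\in\mathscr{F}(X)$ with $S\neq\{p\}$ can be equal to $\{p\}$ while still containing $p$ as its only point, so any $f\in\sel[\mathscr{F}(X)]$ can be modified on the closed-and-open piece $\mathscr{F}(X)\setminus\langle X\setminus\{p\}\rangle$ — actually, even more directly, since $\{p\}$ is an intersection of (one) clopen set vacuously, one can still run the general argument; but it is cleanest to note that when $p$ is isolated the hypothesis that $\{p\}$ is a countable intersection of clopen sets is automatic and the construction below still applies, so no separate treatment is really needed beyond a remark. In any case, assume $p$ is non-isolated.

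Next, apply Proposition \ref{proposition-Min-Sel-v35:2}\,\ref{item:Ordinal-Decomp-2nd:3}: since $\{p\}$ is a countable intersection of clopen subsets of $X$ and $p$ is non-isolated, $X$ admits a quasi-ordinal decomposition $\eta\colon X\to[0,\omega]$ with $\eta^{-1}(\omega)=\{p\}$. The only limit ordinal $\lambda\le\omega$ is $\lambda=\omega$ itself, and $\eta^{-1}(\omega)=\{p\}$ is a singleton, hence trivially clopen modulo the point $q_\omega=p$; the $p$-maximality condition required of $g_\omega$ in Theorem \ref{theorem-Min-Sel-v35:2} is then vacuous, since the only $S\in\mathscr{F}(\eta^{-1}(\omega))$ is $\{p\}$ and $g_\omega(\{p\})=p$ automatically. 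For the non-limit ordinals $n<\omega$, each fibre $\eta^{-1}(n)$ is a nonempty clopen subset of $X$, in particular closed, so $\sel[\mathscr{F}(X)]\neq\emptyset$ restricts to give $g_n\in\sel[\mathscr{F}(\eta^{-1}(n))]$ — here I would cite the standard fact that $\sel[\mathscr{F}(X)]\neq\emptyset$ implies $\sel[\mathscr{F}(A)]\neq\emptyset$ for every closed $A\subset X$, obtained by restricting a selection for $\mathscr{F}(X)$ along the closed embedding $\mathscr{F}(A)\hookrightarrow\mathscr{F}(X)$. Then Theorem \ref{theorem-Min-Sel-v35:2} produces a continuous selection $f=\bigwedge_{n\le\omega}g_n$ for $\mathscr{F}(X)$, and by the remark following Theorem \ref{theorem-Min-Sel-v35:2} (the case $\eta^{-1}(\gamma)=\{p\}$), this $f$ is $p$-minimal.

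I do not expect any real obstacle: the entire content has been prepackaged into Proposition \ref{proposition-Min-Sel-v35:2} and Theorem \ref{theorem-Min-Sel-v35:2}, and the corollary is assembled by matching hypotheses. The one point that deserves a sentence of care is verifying that the hypotheses of Theorem \ref{theorem-Min-Sel-v35:2} are met vacuously at the unique limit stage $\omega$, and noting that the terminal-fibre remark upgrades continuity to $p$-minimality; beyond that the proof is a two-line invocation, exactly parallel to the proof of Corollary \ref{corollary-Min-Sel-v35:1}.
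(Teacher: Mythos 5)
Your proof is correct and follows exactly the route the paper intends: the corollary is stated there without an explicit proof, immediately after the remark that $f=\bigwedge_{\alpha\leq\gamma}g_\alpha$ is $p$-minimal whenever $\eta^{-1}(\gamma)=\{p\}$, and the implicit argument is precisely your reduction via Proposition \ref{proposition-Min-Sel-v35:2}\,\ref{item:Ordinal-Decomp-2nd:3} and Theorem \ref{theorem-Min-Sel-v35:2}, in exact parallel to the proof of Corollary \ref{corollary-Min-Sel-v35:1}. The only stylistic remark is that your handling of the isolated case is more convoluted than necessary; the paper simply dismisses it as trivial.
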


\section{A Selection Relation and Sets Clopen Modulo a Point}
\label{sec:select-relat-sets}

The hyperspace selection problem for connected spaces was resolved by
Michael in \cite{michael:51}. It was based on a natural order-like
relation $\leq_\sigma$ on $X$ \cite[Definition 7.1]{michael:51}
defined for $x,y\in X$ by $x\leq_\sigma y$ iff $\sigma(\{x,y\})=x$,
where $\sigma$ is a selection for the subset
$\left\{\{x,y\}: x,y\in X\right\}\subset
\mathscr{F}(X)$. Subsequently, dealing with a hyperspace selection
problem for metrizable spaces, the relation $\leq_\sigma$ was extended
in \cite{zbMATH06882273} to a relation $\leqt_\varphi$ on
$\mathscr{F}(X)$ with respect to a map
$\varphi:\mathscr{F}(X)\to \mathscr{F}(X)$. The definition of
$\leqt_\varphi$ is very similar, namely for $S,T\in \mathscr{F}(X)$ we
write that $S\leqt_\varphi T$ whenever
$f(S\cup T)\cap S\neq \emptyset$. Evidently, for $p\in X$, the
condition $\{p\}\leqt_\varphi T$ means that
$p\in \varphi(T\cup\{p\})$. In this section, we consider the special
case of the relation $\leqt_\varphi$ when
$\varphi=f:\mathscr{F}(X)\to X$ is a selection. In this case, the
corresponding relation $\leqt_f$ is between the points $p\in X$ and
the closed subsets $A\subset X$, and is defined by
\begin{equation}
  \label{eq:Min-Sel-v27:1}
  p\leqt_f A\quad \text{if $f(A\cup\{p\})=p$.}
\end{equation}
Moreover, for convenience, we will write that $p\lhd_f A$ if $p\leqt_f A$
and $p\notin A$.\medskip

In what follows, we will use $\mathscr{T}(X)$ to denote the topology
of a space $X$. Also, the \emph{complement} $X\setminus A$ of a set
$A\subset X$ will be denoted by $A^\cmpl$, i.e.\
$A^\cmpl=X\setminus A$. Thus,
$\mathscr{F}(X)=\left\{V^\cmpl: V\in \mathscr{T}(X)\ \text{and}\ V\neq
  X\right\}$. In these terms, to each $V\in \mathscr{T}(X)$ we will
associate the following sets generated by a selection
$f:\mathscr{F}(X)\to X$.
\begin{equation}
  \label{eq:Min-Sel-v27:2}
  \langle V\rangle_{\leqt_f}=\left\{x\in X: x\lhd_f
    V^\cmpl\right\}\quad
  \text{and}\quad [V]_{\leqt_f}=\left\{x\in X: x\leqt_f
    V^\cmpl\right\}. 
\end{equation}
For a continuous selection $f\in \sel[\mathscr{F}(X)]$, the set
$\langle V\rangle_{\leqt_f}$ can be regarded as the $f$-``interior''
of $V\in \mathscr{T}(X)$, and the set $[V]_{\leqt_f}$\,---\,as the
$f$-``closure'' of $\langle V\rangle_{\leqt_f}$.

\begin{proposition}
  \label{proposition-Min-Sel-v26:1}
  If $f\in\sel[\mathscr{F}(X)]$ and $V\in \mathscr{T}(X)$, then
  $\langle V\rangle_{\leqt_f}\in \mathscr{T}(X)$ and
  $[V]_{\leqt_f}\in \Delta(X)$.  Moreover, if $V\neq X$, then
  $[V]_{\leqt_f}=\langle V\rangle_{\leqt_f}\cup
  \left\{f\left(V^\cmpl\right)\right\}$ and, therefore,
  $[V]_{\leqt_f}$ is clopen modulo the point $f\left(V^\cmpl\right)$.
\end{proposition}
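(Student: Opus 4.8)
The plan is to establish the three claims in order: first that $\langle V\rangle_{\leqt_f}$ is open, then the identity $[V]_{\leqt_f}=\langle V\rangle_{\leqt_f}\cup\{f(V^\cmpl)\}$ when $V\neq X$, and from this deduce that $[V]_{\leqt_f}\in\Delta(X)$, clopen modulo $f(V^\cmpl)$. The key tool throughout is the continuity map $\varphi:\langle V^\cmpl,X\rangle\to\mathscr{F}(V^\cmpl)$ given by $\varphi(S)=(S\cap V^\cmpl)\cup\{x\}$ for a fixed point $x$, which is Vietoris continuous by \cite[Proposition 6.1]{garcia-ferreira-gutev-nogura-sanchis-tomita:99} — the same map invoked after Theorem \ref{theorem-Min-Sel-v35:2}. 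Composing $\varphi$ with $f$ gives a continuous map $S\mapsto f\big((S\cap V^\cmpl)\cup\{x\}\big)$ on the open set $\langle V^\cmpl,X\rangle$, and specializing to singletons $S=\{x'\}$ recovers exactly the function $x'\mapsto f(V^\cmpl\cup\{x'\})$ that defines membership in $\langle V\rangle_{\leqt_f}$ and $[V]_{\leqt_f}$.

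\emph{Openness of $\langle V\rangle_{\leqt_f}$.} If $V=X$, then $V^\cmpl=\emptyset$ is not a member of $\mathscr{F}(X)$, so the condition $x\lhd_f V^\cmpl$ must be read as $f(\{x\})=x$, which holds for every $x$; thus $\langle X\rangle_{\leqt_f}=X$ is open. So assume $V\neq X$, put $A=V^\cmpl\in\mathscr{F}(X)$, and let $x_0\in\langle V\rangle_{\leqt_f}$, so $x_0\in V$ and $f(A\cup\{x_0\})=x_0$. I would apply continuity of $f$ at the closed set $A\cup\{x_0\}$ together with the continuity of the map $x\mapsto f(A\cup\{x\})$ obtained above: since $f(A\cup\{x_0\})=x_0\in V$ and $A\cup\{x_0\}\in\langle V, X\rangle$ (as $x_0\in V$ while $A\subset V^\cmpl\cup\{x_0\}$... more precisely $A\cup\{x_0\}\subset X$ and meets $V$), there is a basic Vietoris neighbourhood of $A\cup\{x_0\}$ mapped by $f$ into $V$; intersecting with the slice of singletons gives an open $W\ni x_0$ with $f(A\cup\{x\})\in V$ for all $x\in W$. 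Shrinking $W$ to lie inside $V$, every $x\in W$ satisfies $x\notin A$ and $f(A\cup\{x\})\in V$; but $f(A\cup\{x\})\in A\cup\{x\}$ and $f(A\cup\{x\})\in V\subset A^\cmpl$ forces $f(A\cup\{x\})=x$, i.e. $x\lhd_f A$. Hence $W\subset\langle V\rangle_{\leqt_f}$, proving openness.

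\emph{The identity and the $\Delta(X)$ conclusion.} Assume $V\neq X$ and write $p=f(V^\cmpl)=f(A)$. For the inclusion $\langle V\rangle_{\leqt_f}\subset[V]_{\leqt_f}$ note $x\lhd_f A$ trivially gives $x\leqt_f A$; and $p\in[V]_{\leqt_f}$ because $f(A\cup\{p\})=f(A)=p$ (using $p\in A$, so $A\cup\{p\}=A$). Conversely take $x\in[V]_{\leqt_f}$, so $f(A\cup\{x\})=x$. If $x\notin A$ then $x\lhd_f A$, i.e. $x\in\langle V\rangle_{\leqt_f}$; if $x\in A$ then $A\cup\{x\}=A$ and $x=f(A\cup\{x\})=f(A)=p$. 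This yields $[V]_{\leqt_f}=\langle V\rangle_{\leqt_f}\cup\{p\}$. Since $\langle V\rangle_{\leqt_f}$ is open and contained in $V$ while $p$ may or may not lie in it, the set $[V]_{\leqt_f}\setminus\{p\}=\langle V\rangle_{\leqt_f}\setminus\{p\}$ is open, so $[V]_{\leqt_f}$ is clopen modulo $p$ \emph{provided it is closed}. Closedness follows from continuity of $f$: if $x\notin[V]_{\leqt_f}$, i.e. $f(A\cup\{x\})=y\neq x$ for some $y\in A$, then by continuity of $x'\mapsto f(A\cup\{x'\})$ and Hausdorffness there is an open $W\ni x$ with $f(A\cup\{x'\})\in O_y$ and $x'\in O_x$ for disjoint $O_x\ni x$, $O_y\ni y$, whence $f(A\cup\{x'\})\neq x'$ for all $x'\in W\cap O_x$; thus the complement of $[V]_{\leqt_f}$ is open. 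Therefore $[V]_{\leqt_f}\in\Delta(X)$, clopen modulo $p=f(V^\cmpl)$. Finally, when $V=X$ we have $[X]_{\leqt_f}=X\in\Delta(X)$ trivially (it is clopen).

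\emph{Main obstacle.} The only delicate point is the bookkeeping around the degenerate case $V=X$ (where $V^\cmpl\notin\mathscr{F}(X)$) and making rigorous the reduction of ``continuity of $f$ near $A\cup\{x_0\}$'' to ``continuity of $x\mapsto f(A\cup\{x\})$''; this is handled cleanly by the map $\varphi$ of \cite[Proposition 6.1]{garcia-ferreira-gutev-nogura-sanchis-tomita:99} once one checks $\{x\}\in\langle A,X\rangle$... rather, that $A\cup\{x\}\in\langle X\rangle$, so the composite $f\circ\varphi$ is defined and continuous on a neighbourhood in $X$ of any relevant point. Everything else is a routine use of the defining property $f(S)\in S$ and Hausdorffness.
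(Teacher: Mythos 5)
Your proposal is correct and takes essentially the same route as the paper: both arguments rest on the continuity of the single map $x\longmapsto f\left(V^\cmpl\cup\{x\}\right)$, identify $[V]_{\leqt_f}$ as its (closed, by Hausdorffness) fixed-point set and $\langle V\rangle_{\leqt_f}$ as the preimage of $V$ (using $f(S)\in S$ to turn ``lands in $V$'' into ``equals $x$''), and then settle the case $V\neq X$ by noting that $f\left(V^\cmpl\cup\{x\}\right)=f\left(V^\cmpl\right)$ for $x\in V^\cmpl$, so that the only possible fixed point outside $V$ is $f\left(V^\cmpl\right)$. The one blemish is your appeal to the map $\varphi$ of \cite[Proposition 6.1]{garcia-ferreira-gutev-nogura-sanchis-tomita:99}: that map is defined for a set clopen modulo a point and, as you yourself half-notice, does not specialize on singletons to $x\mapsto V^\cmpl\cup\{x\}$; however, the Vietoris continuity of $x\mapsto V^\cmpl\cup\{x\}$ is an elementary direct check and is all your argument actually uses, so nothing breaks.
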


\begin{proof}
  Since the map
  $X\ni x\longrightarrow f_V(x)=f\left(V^\cmpl\cup\{x\}\right)\in X$
  is continuous, the set $[V]_{\leqt_f}=\{x\in X: f_V(x)=x\}$ is
  closed. Moreover, for the same reason,
  $\langle V\rangle_{\leqt_f}=f_V^{-1}(V)$ is open because $f$ is a
  selection for $\mathscr{F}(X)$. If $V=X$, then
  $[X]_{\leqt_f}=\langle X\rangle_{\leqt_f}=X$ because
  $f\left(X^\cmpl\cup \{x\}\right)=f(\emptyset\cup\{x\})=f(\{x\})=x$
  for every $x\in X$.  Otherwise, if $V\neq X$, then
  $f\left(V^\cmpl\cup\{x\}\right)=f\left(V^\cmpl\right)$ for every
  $x\in V^\cmpl$. Accordingly,
  ${f\left(V^\cmpl\right)\in [V]_{\leqt_f}}$ and
  $\langle V\rangle_{\leqt_f}= [V]_{\leqt_f}\cap V=
  [V]_{\leqt_f}\setminus\left\{f\left(V^\cmpl\right)\right\}$.
\end{proof}

Here, we will be mainly interested in the special case when
$f\in \sel[\mathscr{F}(X)]$ is a $p$-maximal selection for some point
$p\in X$ and $p\in V\in \mathscr{T}(X)$. In this case,
$p\lhd_f V^\cmpl$ and, accordingly,
$p\in \langle V\rangle_{\leqt_f}\subset V$. This implies the following
very simple proof of \cite[Lemma
3.4]{garcia-ferreira-gutev-nogura-sanchis-tomita:99}.

\begin{corollary}
  \label{corollary-Min-Sel-v27:1}
  Let $V\in \mathscr{T}(X)$ and $f\in \sel[\mathscr{F}(X)]$ be a
  $p$-maximal selection for some point $p\in V$. If\/
  $W=V\setminus \{q\}$ for some point
  $q\in \langle V\rangle_{\leqt_f}$ with $q\neq p$, then the set
  $[W]_{\leqt_f}$ is clopen modulo the point $f\left(W^\cmpl\right)=q$
  and
  $ p\in \langle W\rangle_{\leqt_f}\subsetneq [W]_{\leqt_f}\subset V$.
\end{corollary}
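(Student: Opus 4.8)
The plan is to apply Proposition \ref{proposition-Min-Sel-v26:1} to the open set $W$ and then to identify the point $f\left(W^\cmpl\right)$. First, since $X$ is Hausdorff the singleton $\{q\}$ is closed, so $W=V\setminus\{q\}\in\mathscr{T}(X)$; moreover $W\neq X$ because $q\notin W$, hence $W^\cmpl\in\mathscr{F}(X)$. Proposition \ref{proposition-Min-Sel-v26:1} then gives immediately that $\langle W\rangle_{\leqt_f}\in\mathscr{T}(X)$, that $[W]_{\leqt_f}\in\Delta(X)$, that $[W]_{\leqt_f}=\langle W\rangle_{\leqt_f}\cup\left\{f\left(W^\cmpl\right)\right\}$, and that $[W]_{\leqt_f}$ is clopen modulo the point $f\left(W^\cmpl\right)$.

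The next step is to compute $f\left(W^\cmpl\right)$. Since $W=V\setminus\{q\}$, we have $W^\cmpl=V^\cmpl\cup\{q\}$. The hypothesis $q\in\langle V\rangle_{\leqt_f}$ means exactly $q\lhd_f V^\cmpl$, i.e.\ $q\leqt_f V^\cmpl$ and $q\notin V^\cmpl$, so by \eqref{eq:Min-Sel-v27:1} we obtain $f\left(W^\cmpl\right)=f\left(V^\cmpl\cup\{q\}\right)=q$. Together with the previous paragraph this already proves that $[W]_{\leqt_f}$ is clopen modulo $f\left(W^\cmpl\right)=q$.

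It remains to chase the inclusions $p\in\langle W\rangle_{\leqt_f}\subsetneq[W]_{\leqt_f}\subset V$. For the first, $p\in V$ and $p\neq q$ give $p\in W$, so $p\notin W^\cmpl$, while $p$-maximality of $f$ forces $f\left(W^\cmpl\cup\{p\}\right)=p$; hence $p\lhd_f W^\cmpl$, that is $p\in\langle W\rangle_{\leqt_f}$. For the strict inclusion, recall from Proposition \ref{proposition-Min-Sel-v26:1} that $\langle W\rangle_{\leqt_f}=[W]_{\leqt_f}\setminus\left\{f\left(W^\cmpl\right)\right\}=[W]_{\leqt_f}\setminus\{q\}$, so $q\in[W]_{\leqt_f}\setminus\langle W\rangle_{\leqt_f}$. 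Finally $\langle W\rangle_{\leqt_f}\subset W\subset V$ and $q\in V$, so $[W]_{\leqt_f}=\langle W\rangle_{\leqt_f}\cup\{q\}\subset V$.

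Since every step is a direct invocation of Proposition \ref{proposition-Min-Sel-v26:1}, the definitions in \eqref{eq:Min-Sel-v27:1}--\eqref{eq:Min-Sel-v27:2}, and $p$-maximality, I do not anticipate any genuine obstacle; the only point deserving a little care is the bookkeeping that converts the membership $q\in\langle V\rangle_{\leqt_f}$ into the equality $f\left(W^\cmpl\right)=q$, which is exactly what makes the $f$-``interior'' strictly shrink after deleting one further point of $V$.
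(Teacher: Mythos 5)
Your proof is correct and follows essentially the same route as the paper: apply Proposition \ref{proposition-Min-Sel-v26:1} to the open set $W\neq X$, use $q\in\langle V\rangle_{\leqt_f}$ to compute $f\left(W^\cmpl\right)=f\left(V^\cmpl\cup\{q\}\right)=q$, and conclude $[W]_{\leqt_f}=\langle W\rangle_{\leqt_f}\cup\{q\}\subset W\cup\{q\}=V$. The only difference is that you spell out the memberships $p\in\langle W\rangle_{\leqt_f}$ and $q\in[W]_{\leqt_f}\setminus\langle W\rangle_{\leqt_f}$, which the paper leaves implicit (the former having been noted just before the corollary).
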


\begin{proof}
  Since $W\neq X$ and $q\in \langle V\rangle_{\leqt_f}$, by
  Proposition \ref{proposition-Min-Sel-v26:1}, the set $[W]_{\leqt_f}$
  is clopen modulo the point
  $f\left(W^\cmpl\right)=f\left(V^\cmpl\cup \{q\}\right)=q$ and
  $[W]_{\leqt_f}=\langle
  W\rangle_{\leqt_f}\cup\left\{f\left(W^\cmpl\right)\right\}$. Accordingly,
  $[W]_{\leqt_f}=\langle W\rangle_{\leqt_f}\cup\{q\}\subset
  W\cup\{q\}=V$.
\end{proof}

According to Corollary \ref{corollary-Min-Sel-v27:1}, each selection
pointwise-maximal space is regular. In fact, it further implies that
such a space $X$ is also \emph{zero-dimensional}, i.e.\ that $X$ has a
base of clopen sets.

\begin{corollary}
  \label{corollary-Min-Sel-v32:1}
  Each selection pointwise-maximal space $X$ is zero-dimensional.
\end{corollary}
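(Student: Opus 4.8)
The plan is to prove that every point of $X$ has a neighbourhood base of clopen sets. Isolated points are trivial, so fix a non-isolated $p\in X$, an open $V\ni p$, and\,---\,since $X$ is selection pointwise-maximal\,---\,a $p$-maximal selection $f\in\sel[\mathscr{F}(X)]$. By Proposition \ref{proposition-Min-Sel-v26:1} the set $\langle V\rangle_{\leqt_f}$ is open, and $p\in\langle V\rangle_{\leqt_f}\subset V$ because $f$ is $p$-maximal; being an open neighbourhood of the non-isolated point $p$, it is infinite. If $\langle V\rangle_{\leqt_f}$ happens to be closed, it is clopen and we are done; otherwise pick $q\in\langle V\rangle_{\leqt_f}\setminus\{p\}$ and apply Corollary \ref{corollary-Min-Sel-v27:1} to $W=V\setminus\{q\}$, getting $H:=[W]_{\leqt_f}$ clopen modulo $q$ with $p\in\langle W\rangle_{\leqt_f}\subsetneq H\subset V$, so that $\overline{\langle W\rangle_{\leqt_f}}\subset H$ (which, by the way, re-proves the regularity remarked on above). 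If $q$ is isolated in $H$, then $H\setminus\{q\}=\langle W\rangle_{\leqt_f}$ is both open and closed, and again we are done; thus the essential case is that $q\in\overline{\langle W\rangle_{\leqt_f}}$, i.e.\ $H$ is the closure of the open set $\langle W\rangle_{\leqt_f}$ with unique boundary point $q$.

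To treat this case I would shrink $H$ by intersecting it with a second set clopen modulo a point, relying on the following transversality remark: if $A$ is clopen modulo $a$ and $B$ is clopen modulo $b$ with $a\notin B$ and $b\notin A$, then $A\cap B$ is clopen. Indeed it is closed, and each of its points is different from $a$ and $b$ (as $a,b\notin A\cap B$), hence has the open neighbourhood $(A\setminus\{a\})\cap(B\setminus\{b\})$ inside $A\cap B$. The sets to feed into this are again supplied by Corollary \ref{corollary-Min-Sel-v27:1}: for distinct $q_1,q_2\in\langle V\rangle_{\leqt_f}\setminus\{p\}$ the set $H_i:=[V\setminus\{q_i\}]_{\leqt_f}$ is clopen modulo $q_i$ and satisfies $p\in H_i\subset V$, and one checks that $q_i\in H_j$ (for $i\ne j$) exactly when $f\bigl(V^\cmpl\cup\{q_1,q_2\}\bigr)=q_i$. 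Hence the pair is transverse precisely when $f\bigl(V^\cmpl\cup\{q_1,q_2\}\bigr)\in V^\cmpl$, and then $H_1\cap H_2$ is a clopen neighbourhood of $p$ contained in $V$.

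The crux\,---\,and the step I expect to be hardest\,---\,is to ensure that a transverse pair actually exists inside the infinite set $\langle V\rangle_{\leqt_f}\setminus\{p\}$, i.e.\ to exclude the possibility that $f$ always picks a point of $\langle V\rangle_{\leqt_f}\setminus\{p\}$ in preference to $V^\cmpl$ on every pair. This is where one must use continuity of $f$ together with $p$-maximality\,---\,note that $p$ lies in the closure of $\langle V\rangle_{\leqt_f}\setminus\{p\}$, so $f$ is forced to equal $p$ on $V^\cmpl$ together with that closure\,---\,and, if necessary, the maximal selections attached to the exceptional points; once this is settled, the constructions above assemble the desired clopen neighbourhood of $p$ inside $V$.
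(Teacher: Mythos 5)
Your preparatory steps are all correct: $\langle V\rangle_{\leqt_f}$ is an open neighbourhood of $p$ contained in $V$, Corollary \ref{corollary-Min-Sel-v27:1} produces $H=[W]_{\leqt_f}$, the transversality remark is valid, and so is the computation that $q_i\in H_j$ exactly when $f\left(V^\cmpl\cup\{q_1,q_2\}\right)=q_i$. The step you yourself flag as the crux, however, is a genuine gap, and it cannot be closed by the means you indicate. Up to that point your argument uses only one hypothesis: that the single selection $f$ is continuous and $p$-maximal. But $X=[0,1]$ with $f=\max$ and $p=1$ satisfies exactly that hypothesis, $p$ is non-isolated, and for $V=(a,1]$ one computes $\langle V\rangle_{\leqt_f}=(a,1]$ and $H=[W]_{\leqt_f}=[q,1]$, which is not clopen and has $q$ non-isolated in it, so you land squarely in your ``essential case''; yet $f\left(V^\cmpl\cup\{q_1,q_2\}\right)=\max(q_1,q_2)\in\{q_1,q_2\}$ for every pair, so no transverse pair exists --- and none can, since $[0,1]$ has no clopen neighbourhood of $1$ inside $(a,1]$. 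Hence the existence of a transverse pair is simply false under the hypotheses you actually use; any correct proof must invoke the selection-maximality of points other than $p$, which your argument never does. Your suggested rescue, that ``$f$ is forced to equal $p$ on $V^\cmpl$ together with that closure'', is also off the mark: $p$-maximality constrains $f$ only on sets containing $p$, and $V^\cmpl\cup\{q_1,q_2\}$ need not contain $p$.

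The paper's proof shows exactly where the second selection enters. Having obtained $H_1\subset V$ clopen modulo a point $q_1\neq p$ with $p\in H_1$ (your $H$), it notes that $H_1\neq V$ because $V$ is not clopen, picks $q_2\in V\setminus H_1$, and then applies Corollary \ref{corollary-Min-Sel-v27:1} to a \emph{$q_1$-maximal} selection and the open set $V\setminus\{p\}$ to produce a second set $H_2\subset V\setminus\{p\}$ with $q_1\in H_2$ and $H_2$ clopen modulo a point lying outside $H_1$. The clopen neighbourhood is then the \emph{difference} $U=H_1\setminus H_2$, which is open because the exceptional point $q_1$ of $H_1$ lies in $H_2$, and closed because the exceptional point of $H_2$ lies outside $H_1$. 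Your transversality lemma is precisely the analogue of this difference trick for intersections, but building both sets from the same $p$-maximal $f$, with both containing $p$, is what makes the required separation of exceptional points unobtainable. Replace one of your $H_i$ by a set produced from a $q_1$-maximal selection inside $V\setminus\{p\}$ and pass from the intersection to the difference, and your argument becomes the paper's.
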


\begin{proof}
  Let $p\in V\in \mathscr{T}(X)$, where $V$ is not clopen. Since
  $\mathscr{F}(X)$ has a $p$-maximal selection, by Corollary
  \ref{corollary-Min-Sel-v27:1}, $p\in H_1\subset V$ for some set
  $H_1$ which is clopen modulo a point $q_1\in H_1$ with $q_1\neq
  p$. Then $H_1\neq V$, and there exists a point
  $q_2\in V\setminus H_1$. Hence, for the same reason,
  $q_1\in H_2\subset V\setminus\{p\}$ for some set $H_2$ which is
  clopen modulo the point $q_2$. Accordingly, $U=H_1\setminus H_2$ is
  a clopen set with $p\in U\subset V$.
\end{proof}

Corollary \ref{corollary-Min-Sel-v32:1} gives a very simple proof of
one of the implications in \cite[Theorem 1.4]{gutev-nogura:00b}, see
also \cite[Theorem 4.3]{gutev-nogura:03a}. Furthermore, the proof of
this fact in \cite{gutev-nogura:00b} is not direct, but is based on
\cite[Theorem 1.5]{gutev-nogura:00b} that every selection
pointwise-maximal space $X$ is \emph{totally disconnected}, i.e.\ that
each singleton of $X$ is an intersection of clopen sets.  The proof in
\cite{gutev-nogura:00b} is also based on maximal chains in the
Vietoris hyperspace. The interested reader is referred to
\cite{bertacchi-costantini:98,costantini-gutev:99,gutev-nogura:00a},
where such chains have played a similar role. \medskip

A space $X$ is \emph{zero-dimensional} at $p\in X$ if it has a clopen
local base at $p$. Similarly, we say that $X$ is \emph{totally
  disconnected} at a point $p\in X$ if the singleton $\{p\}$ is an
intersection of clopen subsets of $X$. Sets clopen modulo a point are
also very efficient to obtain the following result which is
complementary to Corollary \ref{corollary-Min-Sel-v32:1}. It gives a
further simplification of the proof of the aforementioned implication
in \cite[Theorem 1.4]{gutev-nogura:00b}.

\begin{corollary}
  \label{corollary-Min-Sel-v18:1}
  Let $X$ be a space which is totally disconnected at a point $p\in X$
  and has a $p$-maximal selection $f\in \sel[\mathscr{F}(X)]$. Then
  $X$ is zero-dimensional at $p$.
\end{corollary}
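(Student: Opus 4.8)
The plan is to follow the proof of Corollary~\ref{corollary-Min-Sel-v32:1}, but to replace its use of a second point-maximal selection by the hypothesis that $X$ is totally disconnected at $p$. Fix an open set $V\subset X$ with $p\in V$; the goal is to produce a clopen set $U$ with $p\in U\subset V$, which, ranging over all such $V$, exhibits a clopen local base at $p$. The case $V=X$ is trivial because $X$ itself is clopen, so I may assume $V\neq X$, and then $V^\cmpl\in\mathscr{F}(X)$.

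First I apply Proposition~\ref{proposition-Min-Sel-v26:1} to $V$: since $V\neq X$, the set $[V]_{\leqt_f}$ is closed, it equals $\langle V\rangle_{\leqt_f}\cup\bigl\{f(V^\cmpl)\bigr\}$, and it is clopen modulo the point $q:=f(V^\cmpl)$; moreover $\langle V\rangle_{\leqt_f}=[V]_{\leqt_f}\cap V$ is open. As noted just before Corollary~\ref{corollary-Min-Sel-v27:1}, the $p$-maximality of $f$ together with $p\in V$ gives $p\lhd_f V^\cmpl$, i.e.\ $p\in\langle V\rangle_{\leqt_f}\subset V$. On the other hand, since $f$ is a selection, $q=f(V^\cmpl)\in V^\cmpl$, so $q\notin V$ and in particular $q\neq p$.

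Now I use that $X$ is totally disconnected at $p$: since $\{p\}$ is an intersection of clopen subsets of $X$ and $q\neq p$, there is a clopen set $C$ with $p\in C$ and $q\notin C$. Put $U:=[V]_{\leqt_f}\cap C$. Because $q\notin C$, this $U$ coincides with $\langle V\rangle_{\leqt_f}\cap C$, hence is open; and, being the intersection of the closed set $[V]_{\leqt_f}$ with the clopen set $C$, it is also closed. Thus $U$ is clopen, $p\in U$ (as $p$ lies in both $\langle V\rangle_{\leqt_f}$ and $C$), and $U\subset\langle V\rangle_{\leqt_f}\subset V$, as required.

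I do not expect a genuine obstacle here: Proposition~\ref{proposition-Min-Sel-v26:1} already does the substantive work of producing, from $V$ and the $p$-maximal selection $f$, a set clopen modulo a single point $q$ sitting between $p$ and $V$. The only delicate points are purely bookkeeping: checking that $q\notin V$ (so that $q$ can be separated from $p$ by a clopen set supplied by total disconnectedness at $p$), and observing that deleting $q$ from $[V]_{\leqt_f}$ by intersecting with $C$ makes the result open precisely because $[V]_{\leqt_f}$ is clopen modulo $q$.
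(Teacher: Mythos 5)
Your proof is correct and follows essentially the same route as the paper: produce, inside $V$, a set clopen modulo a single point distinct from $p$ that contains $p$, then cut that point away with a clopen set supplied by total disconnectedness at $p$. The only (harmless) difference is that you apply Proposition~\ref{proposition-Min-Sel-v26:1} directly to $V$, so your exceptional point $f\left(V^\cmpl\right)$ lies outside $V$ and is automatically distinct from $p$, whereas the paper first passes to $W=V\setminus\{q\}$ via Corollary~\ref{corollary-Min-Sel-v27:1} for a chosen $q\in\langle V\rangle_{\leqt_f}$ with $q\neq p$; this also lets you avoid the paper's (inessential) case split on whether $p$ is isolated.
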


\begin{proof}
  Assume that $p$ is not isolated and $V\in \mathscr{T}(X)$
  with $p\in V$. Then there exists a point
  $q\in \langle V\rangle_{\leqt_f}$ with $q\neq p$. Hence, by
  Corollary \ref{corollary-Min-Sel-v27:1}, $V$ contains a
  set $H$ which is clopen modulo the point $q$ and $p\in
  H$. Accordingly, $q\in W\subset X\setminus\{p\}$ for
  some clopen set $W\subset X$. Thus, $U=H\setminus W$ is a
  clopen set with $p\in U\subset V$.
\end{proof}

\section{Cut Points and First Countability} 
\label{sec:cut-points-first}

A point $p\in X$ of a connected space $X$ is \emph{cut} if
$X\setminus\{p\}$ is not connected, equivalently if
$X\setminus \{p\}=U\cup V$ for some subsets $U, V\subset X$ with
$\overline{U}\cap \overline{V}=\{p\}$. Extending this interpretation
to an arbitrary space $X$, a point $p\in X$ is called \emph{cut}
\cite{gutev-nogura:03a} (see also \cite{gutev:00e,gutev-nogura:00d})
if $X\setminus\{p\}=U\cup V$ and $\overline{U}\cap \overline{V}=\{p\}$
for some subsets $U,V\subset X$. Cut points were also introduced in
\cite{Dow2008}, where they were called \emph{tie-points}. The
prototype of such points can be traced back to the so called butterfly
points.  A point $p\in X$ is called \emph{butterfly} (or, a
\emph{b-point}) \cite{Sapirovskii1975} if
$\overline{F\setminus\{p\}}\cap \overline{G\setminus\{p\}}=\{p\}$ for
some closed sets $F,G\subset X$. A point $p\in X$ is butterfly
precisely when it is a cut point of some closed subset of $X$. In
fact, cut points (equivalently, tie-points) are often called
\emph{strong butterfly points}. \medskip

For a cut point $p\in X$, for convenience, we say that a pair $(U,V)$
of subsets $U,V\subset X$ is a \emph{$p$-cut} of $X$ if
$X\setminus\{p\}=U\cup V$ and $\{p\}=\overline{U}\cap
\overline{V}$. Evidently, for a $p$-cut $(U,V)$ of $X$, both sets
$\overline{U}$ and $\overline{V}$ are clopen modulo the point
$p$. Furthermore, in this case, $\overline{U}$ and $\overline{V}$ are
almost clopen. This interpretation is very useful to simplify the
proof of the implication (b)\,$\implies$\,(a) in \cite[Theorem
3.1]{gutev-nogura:03a}. In fact, we have a direct simple proof of the
following part of this implication.

\begin{proposition}
  \label{proposition-Ord-Dec-vgg:1}
  If $X$ is a space which has a $p$-maximal selection
  $f\in\sel[\mathscr{F}(X)]$ for some cut point $p\in X$, then it is
  zero-dimensional at $p$.
\end{proposition}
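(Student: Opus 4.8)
The plan is to exploit the $p$-cut structure together with the machinery of sets clopen modulo a point developed in Section~\ref{sec:select-relat-sets}. Since $p\in X$ is a cut point, fix a $p$-cut $(U,V)$ of $X$, so that $X\setminus\{p\}=U\cup V$ and $\overline U\cap\overline V=\{p\}$. Then $\overline U$ and $\overline V$ are both clopen modulo the point $p$, and in particular $U^{*}=\overline U\setminus\{p\}$ and $V^{*}=\overline V\setminus\{p\}$ are open sets whose union is $X\setminus\{p\}$. If $p$ is isolated there is nothing to prove, so assume $p$ is non-isolated; then at least one of $U^{*}$, $V^{*}$ accumulates at $p$, and by symmetry we may assume $p\in\overline{U^{*}}$.

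The key step is to produce, inside an arbitrarily prescribed neighbourhood $W\in\mathscr T(X)$ of $p$, a clopen neighbourhood of $p$. First I would restrict attention to the smaller neighbourhood $W'=W\cap(\overline U)$; since $\overline U$ is clopen modulo $p$, the set $W'\setminus\{p\}=W\cap U^{*}$ is open, so $W'\in\mathscr T(X)$ and it suffices to find a clopen neighbourhood of $p$ contained in $W'$. Now invoke the $f$-interior operator: by Proposition~\ref{proposition-Min-Sel-v26:1} the set $\langle W'\rangle_{\leqt_f}$ is open and, because $f$ is $p$-maximal and $p\in W'$, we have $p\lhd_f (W')^{\cmpl}$, hence $p\in\langle W'\rangle_{\leqt_f}\subset W'$. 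Since $p$ is non-isolated and $\langle W'\rangle_{\leqt_f}$ is an open neighbourhood of $p$ sitting inside $\overline U\setminus V^{*}$, and $p$ accumulates in $U^{*}$, we can pick a point $q\in\langle W'\rangle_{\leqt_f}$ with $q\ne p$. Apply Corollary~\ref{corollary-Min-Sel-v27:1} with this $q$: the set $H=[\,W'\setminus\{q\}\,]_{\leqt_f}$ is clopen modulo the point $q$, and $p\in\langle W'\setminus\{q\}\rangle_{\leqt_f}\subset H\subset W'$, with $q\in H$.

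It remains to peel off the single bad point $q$ from $H$ and stay inside $W'$. For this I would produce a second set, clopen modulo $q$, that contains $q$ but not $p$, and subtract it from $H$. Since $H\subsetneq W'$ is not all of $X$ and $p\notin H\setminus\{q\}$ while $q\in H$, consider the open set $W'\setminus\{p\}$, which contains $q$; applying Corollary~\ref{corollary-Min-Sel-v27:1} again, this time inside the neighbourhood $(\overline V)\cup(W'\setminus\{p\})$ of $q$ — or more simply, using that $\overline V$ itself is clopen modulo $p$ so $\overline V\setminus\{p\}$ is open and hence $q\in\langle (\overline V\setminus\{p\})^{\cmpl}\rangle$-type arguments apply — one obtains a set $K$ clopen modulo some point, with $q\in K$ and $p\notin K$. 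Then $U_0=H\setminus K$ is clopen (a difference of a set clopen modulo $q$ and a set containing $q$ and clopen modulo a point $\ne q$ has both removed boundary points, hence is open, and it is closed as the difference of a closed set and an open set once $K$ is open around its own boundary), and $p\in U_0\subset W'\subset W$. This mirrors exactly the closing move in the proofs of Corollaries~\ref{corollary-Min-Sel-v32:1} and~\ref{corollary-Min-Sel-v18:1}.

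The main obstacle is the last paragraph: one must arrange the auxiliary set $K$ so that $H\setminus K$ is genuinely \emph{clopen}, not merely clopen modulo a point. The clean way is to ensure $K$ is clopen modulo a point \emph{other than} $q$ while still containing $q$ — then $H$ contributes openness away from $q$, $K$ contributes openness at $q$, and the difference is open; closedness is automatic since $K$ will in fact be closed with open complement near the relevant points. Getting $K$ with these properties uses only that $q\ne p$, that $X\setminus\{p\}=U^{*}\cup V^{*}$ is covered by two open sets clopen modulo $p$, and one more application of Corollary~\ref{corollary-Min-Sel-v27:1} to separate $q$ from $p$; I would organise the bookkeeping so that $K\subset W'$ as well, which makes $U_0=H\setminus K$ automatically a subset of $W$. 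Once this is set up, zero-dimensionality at $p$ follows because $W$ was an arbitrary neighbourhood of $p$.
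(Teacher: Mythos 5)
There are two genuine gaps here, and the first one undermines the whole setup. Your reduction to $W'=W\cap\overline{U}$ rests on the claim that $W'\in\mathscr{T}(X)$, but for a $p$-cut $(U,V)$ the point $p$ necessarily lies in $\overline{V}$ as well as in $\overline{U}$: since $\{p\}=\overline{U}\cap\overline{V}$ and $p\notin V\neq\emptyset$, the point $p$ is an accumulation point of \emph{both} halves of the cut, so there is no ``by symmetry we may assume'' to be had. Consequently every open set containing $p$ meets $V$, while $V\cap\overline{U}=\emptyset$; hence $W'=(W\cap U)\cup\{p\}$ is not open, is not even a neighbourhood of $p$, and cannot contain any clopen neighbourhood of $p$. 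Proposition \ref{proposition-Min-Sel-v26:1} and Corollary \ref{corollary-Min-Sel-v27:1} are stated for members of $\mathscr{T}(X)$, so they do not apply to $W'$, and the target of your reduction is unattainable. The second gap is the one you flag yourself as the main obstacle: to make $H\setminus K$ closed you need the exceptional point of $K$ to lie \emph{outside} $H$ (not merely to differ from $q$), and to produce such a $K$ containing $q$ but not $p$ via Corollary \ref{corollary-Min-Sel-v27:1} you would need the selection to be maximal at a point of an open set missing $p$ --- but you only have $p$-maximality. This is precisely the step that works in Corollary \ref{corollary-Min-Sel-v32:1}, where \emph{every} point is selection-maximal, and it is not available here.

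The paper's proof sidesteps both problems by treating the two halves of the cut symmetrically instead of restricting to one of them, and by using the open halves themselves as the ``absorbers'' you were trying to manufacture as $K$. Given $p\in V\in\mathscr{T}(X)$ and a $p$-cut $(X_0,X_1)$, it picks $q_i\in\langle V\rangle_{\leqt_f}\cap X_i$ for $i=0,1$ and sets $U_i=[V\setminus\{q_i\}]_{\leqt_f}\cup X_i$. Each $U_i$ is already clopen: it is closed because $p\in[V\setminus\{q_i\}]_{\leqt_f}$ gives $U_i=[V\setminus\{q_i\}]_{\leqt_f}\cup\overline{X_i}$, and open because the exceptional point $q_i$ is absorbed by the open set $X_i$ while $p$ is absorbed by the open set $\langle V\setminus\{q_i\}\rangle_{\leqt_f}$. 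Finally $U_0\cap U_1$ is a clopen neighbourhood of $p$ inside $V$, since $X_0\cap X_1=\emptyset$ forces $U_0\cap U_1\subset[V\setminus\{q_0\}]_{\leqt_f}\cup[V\setminus\{q_1\}]_{\leqt_f}\subset V$. No second application of the selection machinery at the auxiliary points is needed, which is exactly the step your argument cannot supply.
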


\begin{proof}
  Take a $p$-cut $(X_0,X_1)$ of $X$ and $V\in \mathscr{T}(X)$ with
  $p\in V$. Since $p$ is a non-isolated point, by Proposition
  \ref{proposition-Min-Sel-v26:1}, for each $i=0,1$ there is a point
  $q_i\in \langle V\rangle_{\leqt_f}\cap X_i$. Next,
  setting $W_i=V\setminus\{q_i\}$, it follows from Corollary
  \ref{corollary-Min-Sel-v27:1} that $[W_i]_{\leqt_f}$ is a set clopen
  modulo the point $q_i$ such that
  $p\in \langle W_i\rangle_{\leqt_f}\subset [W_i]_{\leqt_f}\subset
  V$. Finally, set $U_i=[W_i]_{\leqt_f}\cup X_{i}$, $i=0,1$, which is
  a clopen set because $U_i= [W_i]_{\leqt_f}\cup
  \overline{X_{i}}$. Then $U=U_0\cap U_1$ is a clopen subset of $X$
  such that $p\in U\subset V$ because
  $U_0\cap U_1\subset [W_0]_{\leqt_f}\cup [W_1]_{\leqt_f}\subset V$.
\end{proof}

The original proof of Proposition \ref{proposition-Ord-Dec-vgg:1} in
\cite{gutev-nogura:03a} is not direct, and somewhat technical. First,
it was shown that $X$ is totally disconnected at $p$, and next it was
applied Corollary \ref{corollary-Min-Sel-v18:1} (i.e.\ \cite[Theorem
1.4]{gutev-nogura:00b}). The remaining part of the same implication in
\cite[Theorem 3.1]{gutev-nogura:03a} that $X$ is also first countable
at $p\in X$ was based on a property of a cardinal invariant stated in
\cite[Theorem 4.1]{garcia-ferreira-gutev-nogura-sanchis-tomita:99} and
\cite[Theorem 2.1]{gutev-nogura:03a}. Sets clopen modulo a point are
also very efficient to give a simple direct proof of this fact. The
main idea of this proof is stated in the following proposition.

\begin{proposition}
  \label{proposition-Min-Sel-v29:1}
  Let $f\in\sel[\mathscr{F}(X)]$ be a $p$-maximal selection for some
  cut point $p\in X$, and $(X_0,X_1)$ be a $p$-cut of $X$. Suppose
  that $\{U_n:n<\omega\}\subset \mathscr{T}(X)$ is a sequence of open
  sets such that $p\in U_{n+1}\subset \langle U_{n}\rangle_{\leqt_f}$,
  $n<\omega$, and $f\left(U_{2k+i}^\cmpl\right)\in X_i$ for every
  $k<\omega$ and $i=0,1$. Then $\{U_n:n<\omega\}$ is a local base at
  $p$.
\end{proposition}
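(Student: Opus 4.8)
The goal is to show that every open set $V\ni p$ contains some $U_n$. The idea is to exploit the alternation built into the hypothesis: the sets $U_{2k}$ "approximate $p$ from the $X_0$ side" (their selected point lies in $X_0$) and the $U_{2k+1}$ "approximate from the $X_1$ side". Fix an arbitrary open $V$ with $p\in V$. Since $p$ is a cut point with $p$-cut $(X_0,X_1)$, and $p$ is non-isolated, by Proposition \ref{proposition-Min-Sel-v26:1} there are points $q_i\in\langle V\rangle_{\leqt_f}\cap X_i$ for $i=0,1$; then, setting $W_i=V\setminus\{q_i\}$, Corollary \ref{corollary-Min-Sel-v27:1} gives that $H_i:=[W_i]_{\leqt_f}$ is clopen modulo $q_i$ with $p\in\langle W_i\rangle_{\leqt_f}\subset H_i\subset V$, and as in the proof of Proposition \ref{proposition-Ord-Dec-vgg:1} the set $U:=(H_0\cup\overline{X_0})\cap(H_1\cup\overline{X_1})$ is clopen with $p\in U\subset V$ (using $H_0\cup H_1\subset V$). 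So it suffices to find $n$ with $U_n\subset U$; since $U$ is clopen, it is enough to find $n$ with $U_n\subset\overline{X_0}\cup H_1$ and $U_n\subset\overline{X_1}\cup H_0$ — or more cleanly, to argue directly that $U_n\subset U$ for suitable $n$.

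The main step is a monotonicity/trapping argument. First I would record two facts. (1) Monotonicity: from $U_{n+1}\subset\langle U_n\rangle_{\leqt_f}$ one gets that the function $n\mapsto U_n$ is decreasing in a strong $\leqt_f$-sense; in particular $\langle U_{n+1}\rangle_{\leqt_f}\subset\langle U_n\rangle_{\leqt_f}$, because if $x\lhd_f U_{n+1}^\cmpl$ then $x\in U_{n+1}\subset\langle U_n\rangle_{\leqt_f}$. (2) The key containment: I claim that for each $n$, $U_{n+2}\subset\overline{X_i}\cup H_i$ where $i$ is chosen so that $n\equiv i\pmod 2$ — intuitively, the odd-indexed sets cannot "leak" into the $X_0$-part outside $H_0$, and vice versa. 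This is where the hypothesis $f(U_{2k+i}^\cmpl)\in X_i$ is used: it pins down on which side the $f$-interior of each $U_n$ sits. Concretely, one wants: if $x\in U_{n}\cap X_i$ for $n$ of parity $1-i$, then $x\in H_i$, i.e.\ $x\leqt_f W_i^\cmpl = (V\setminus\{q_i\})^\cmpl$; and this should follow by comparing $f$ on $U_n^\cmpl\cup\{x\}$ with $f$ on $W_i^\cmpl\cup\{x\}$, using that $q_i\in\langle V\rangle_{\leqt_f}$ and transitivity-type manipulations of $\leqt_f$ under the $p$-maximal selection $f$.

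Granting the key containment, for any sufficiently large $n$ one has $U_n\subset U_{n-2}\cap U_{n-1}$, hence $U_n\subset(\overline{X_0}\cup H_0)\cap(\overline{X_1}\cup H_1)$ after splitting according to parity; combined with $U_n\subset U_{n-2}$ on the other side this forces $U_n\subset U$. Actually the cleanest packaging is: pick $n$ large; a point $x\in U_n$ lies in $X_0\cup\{p\}\cup X_1$; if $x=p$ then $x\in U$; if $x\in X_0$, use that $U_n\subset U_{m}$ for the largest odd $m<n$ together with $f(U_m^\cmpl)\in X_1$ to conclude $x\in H_0\subset U$; symmetrically for $x\in X_1$. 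So $U_n\subset U\subset V$, as required.

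**The main obstacle.** The delicate point is the "key containment" in step (2): translating the hypothesis $f(U_{2k+i}^\cmpl)\in X_i$ together with $p$-maximality into the inclusion $U_n\cap X_{1-i}\subset H_{1-i}$ (equivalently, the $f$-interior of $U_n$ on the "wrong side" is swallowed by $H_{1-i}$). This requires a careful order-theoretic argument with the relation $\leqt_f$ — essentially showing a transitivity statement of the form "$x\leqt_f U_n^\cmpl$ and (the relevant configuration coming from $q_i\in\langle V\rangle_{\leqt_f}$) imply $x\leqt_f W_i^\cmpl$" — and handling the two-sided bookkeeping without circularity. The monotonicity in step (1) and the final parity split in step (3) are routine by comparison; it is step (2) that carries the real content of the proposition.
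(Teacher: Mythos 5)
Your reduction of the problem to finding $n$ with $U_n\subset U$, where $U=(H_0\cup\overline{X_0})\cap(H_1\cup\overline{X_1})$ is the clopen neighbourhood of $p$ inside $V$ built as in Proposition \ref{proposition-Ord-Dec-vgg:1}, is correct but only relocates the entire difficulty into your ``key containment'' $U_n\cap X_{1-i}\subset H_{1-i}$, which you do not prove and which, as you yourself say, ``carries the real content of the proposition''. The mechanism you propose for it\,---\,``transitivity-type manipulations of $\leqt_f$'', comparing $f\left(U_n^\cmpl\cup\{x\}\right)$ with $f\left(W_i^\cmpl\cup\{x\}\right)$\,---\,has no basis: the relation $\leqt_f$ is defined pointwise by $f(A\cup\{x\})=x$ and enjoys no monotonicity or transitivity; there is no containment in either direction between $U_n^\cmpl$ and $W_i^\cmpl=V^\cmpl\cup\{q_i\}$ for an arbitrary neighbourhood $V$ of $p$, so knowing $f\left(U_n^\cmpl\cup\{x\}\right)=x$ says nothing about $f\left(W_i^\cmpl\cup\{x\}\right)$. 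In fact the containment $U_n\cap X_{1-i}\subset H_{1-i}\subset V$ is essentially the conclusion of the proposition restricted to one side of the cut, so the plan is circular at its core. Note also that your outline never establishes $\bigcap_{n<\omega}U_n=\{p\}$, which any proof of the local base property must deliver.

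What is genuinely needed, and what the hypotheses are designed for, is a convergence argument rather than an order-theoretic one. The complements $Y_n=U_n^\cmpl$ form an increasing sequence of nonempty closed sets, hence are $\tau_V$-convergent to $Y=\overline{\bigcup_{n<\omega}Y_n}$, and continuity of $f$ gives $f(Y)=\lim_{n\to\infty}f(Y_n)$; the alternation hypothesis $f\left(U_{2k+i}^\cmpl\right)\in X_i$ then forces $f(Y)\in\overline{X_0}\cap\overline{X_1}=\{p\}$, so $p\in Y$\,---\,this is the only place the parity condition is used. Next, for $q\in\bigcap_{n<\omega}U_n$ one has $f(Y_n\cup\{q\})=q$ since $q\in U_{n+1}\subset\langle U_n\rangle_{\leqt_f}$, and passing to the limit, $f(Y\cup\{q\})=q$; since $p\in Y$ and $f$ is $p$-maximal, this forces $q=p$, whence $\bigcap_{n<\omega}U_n=\{p\}$ and $Y=X$. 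Finally, if some open $V\ni p$ contained no $U_n$, points $x_{n+1}\in U_{n+1}\setminus V$ would satisfy $f(Y_n\cup\{x_{n+1}\})=x_{n+1}$ with $Y_n\cup\{x_{n+1}\}$ $\tau_V$-convergent to $X$, giving $x_{n+1}\to f(X)=p\in V$ while every $x_{n+1}$ lies in the closed set $V^\cmpl$\,---\,a contradiction. Your proposal contains no trace of this limiting step, and without it the hypotheses, which only prescribe the value of $f$ on individual sets $U_n^\cmpl$ and $U_n^\cmpl\cup\{x\}$, cannot yield the conclusion.
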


\begin{proof}
  By condition, each complement $Y_n=U_n^\cmpl$, $n<\omega$, is
  nonempty because it contains a point of $X_0\cup X_1$. Since these
  complements form an increasing sequence in $\mathscr{F}(X)$, they
  are $\tau_V$-convergent to $Y=\overline{\bigcup_{n<\omega}Y_n}$ and,
  accordingly, $f(Y)=\lim_{n\to\infty}f(Y_n)$. However,
  $f(Y_{2k+i})=f\left(U_{2k+i}^\cmpl\right)\in X_i$ for $k<\omega$ and
  $i=0,1$, which implies that
  $ f(Y)=\lim_{n\to \infty}f(Y_{2n})= \lim_{n\to \infty}f(Y_{2n+1})\in
  \overline{X_0}\cap \overline{X_1}=\{p\}$. Therefore, $p\in Y$. Take
  any point $q\in \bigcap_{n<\omega} U_n$. Then $f(Y_n\cup\{q\})=q$
  because $q\in U_{n+1}\subset \langle U_n\rangle_{\leqt_f}$, see
  \eqref{eq:Min-Sel-v27:1} and \eqref{eq:Min-Sel-v27:2}. Since the
  sequence $Y_n\cup\{q\}$, $n<\omega$, is $\tau_V$-convergent to
  $Y\cup\{q\}$, it follows that $f(Y\cup\{q\})=q$.  Thus, $q=p$
  because $p\in Y$ and $f$ is $p$-maximal, i.e.\
  $\bigcap_{n<\omega} U_n=\{p\}$ and $Y=X$.  Finally, assume that
  $V\subset X$ is an open set such that $p\in V$ and
  $U_n\setminus V\neq \emptyset$, $n<\omega$. Next, for each
  $n<\omega$, take a point $x_n\in U_n\setminus V$. Then
  $f(Y_n\cup\{x_{n+1}\})=f\left(U_n^\cmpl\cup\{x_{n+1}\}\right)=x_{n+1}$
  because $x_{n+1}\in \langle U_{n}\rangle_{\leqt_f}$. Moreover, the
  sequence $Y_n\cup\{y_{n+1}\}$, $n<\omega$, is $\tau_V$-convergent to
  $X=Y$ because $Y=\overline{\bigcup_{n<\omega}Y_n}$. Accordingly,
  $p=f(X)=\lim_{n\to\infty}x_{n+1}$. However, this is impossible
  because $p\in V$ but
  $\lim_{n\to\infty}x_{n+1}\in X\setminus V$.
\end{proof}

The first countability now follows easily from Proposition
\ref{proposition-Min-Sel-v29:1}. It is based on applying twice the
construction in \eqref{eq:Min-Sel-v27:2}. To this end, for $f\in
\sel[\mathscr{F}(X)]$ and $V\in \mathscr{T}(X)$, it will be convenient
to set $\dblr{V}=\langle\langle V\rangle_{\leqt_f}\rangle_{\leqt_f}$.

\begin{corollary}
  \label{corollary-Min-Sel-v25:1}
  Let $X$ be a space which has a $p$-maximal selection
  $f\in\sel[\mathscr{F}(X)]$ for some cut point $p\in X$. Then $X$ is
  first countable at $p$.
\end{corollary}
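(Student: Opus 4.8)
I would deduce Corollary \ref{corollary-Min-Sel-v25:1} from Proposition \ref{proposition-Min-Sel-v29:1} by constructing, for a fixed $p$-cut $(X_0,X_1)$ of $X$, a decreasing sequence of open neighbourhoods $U_n$ of $p$ satisfying the two hypotheses of that proposition: $p\in U_{n+1}\subset \langle U_n\rangle_{\leqt_f}$ and $f(U_{2k+i}^\cmpl)\in X_i$ for $i=0,1$. The trivial case is when $p$ is isolated, so assume $p$ is not isolated. The key technical ingredient is that applying the ``$f$-interior'' operation $\langle\,\cdot\,\rangle_{\leqt_f}$ once to an open set $V$ containing $p$ produces an open set still containing $p$ (Corollary \ref{corollary-Min-Sel-v27:1}, or directly Proposition \ref{proposition-Min-Sel-v26:1} applied to the $p$-maximal selection: $p\lhd_f V^\cmpl$, so $p\in \langle V\rangle_{\leqt_f}\subset V$), and moreover, since $p$ is non-isolated and $(X_0,X_1)$ is a $p$-cut, $\langle V\rangle_{\leqt_f}$ meets both $X_0$ and $X_1$. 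The doubled operation $\dblr{V}=\langle\langle V\rangle_{\leqt_f}\rangle_{\leqt_f}$ will let me strip off one point from $X_i$ at each stage while staying inside $\langle V\rangle_{\leqt_f}$ and keeping $p$.

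\textbf{The construction.} Start with $U_0=X$. Given $U_n$ open with $p\in U_n$, I want $U_{n+1}$ open with $p\in U_{n+1}\subset\langle U_n\rangle_{\leqt_f}$ and $f(U_{n+1}^\cmpl)\in X_i$ where $i\in\{0,1\}$ is prescribed by the parity we are targeting. Using Proposition \ref{proposition-Min-Sel-v26:1} (as above), $\langle U_n\rangle_{\leqt_f}$ is open, contains $p$, and is not clopen-trivial in the relevant sense; because $p$ is non-isolated and $\overline{X_i}$ is clopen modulo $p$, the open set $\langle U_n\rangle_{\leqt_f}$ contains a point $q\neq p$ with $q\in X_i$. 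Now set $W=\langle U_n\rangle_{\leqt_f}\setminus\{q\}$ and put $U_{n+1}=\langle W\rangle_{\leqt_f}$. By Corollary \ref{corollary-Min-Sel-v27:1} (with $V$ there equal to $\langle U_n\rangle_{\leqt_f}$), the set $[W]_{\leqt_f}$ is clopen modulo the point $f(W^\cmpl)=q\in X_i$, and $p\in\langle W\rangle_{\leqt_f}\subsetneq[W]_{\leqt_f}\subset\langle U_n\rangle_{\leqt_f}$. So $U_{n+1}$ is open, contains $p$, is contained in $\langle U_n\rangle_{\leqt_f}$, and $U_{n+1}^\cmpl=W^\cmpl$ gives $f(U_{n+1}^\cmpl)=q\in X_i$. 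Iterating with $i$ chosen according to the parity of $n$ (so that $f(U_{2k+i}^\cmpl)\in X_i$), the sequence $\{U_n:n<\omega\}$ satisfies the hypotheses of Proposition \ref{proposition-Min-Sel-v29:1}, hence is a local base at $p$, and $X$ is first countable at $p$.

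\textbf{The main obstacle.} The one subtlety is verifying at each stage that $\langle U_n\rangle_{\leqt_f}$ genuinely contains a point of $X_i$ distinct from $p$, for the prescribed $i$. This is exactly the place where both $p$ being non-isolated and $(X_0,X_1)$ being a $p$-cut are used: since $\overline{X_i}$ is clopen modulo $p$ and contains points arbitrarily close to $p$ (as $p\in\overline{X_i}$ and $p$ is not isolated), any open set about $p$ meets $X_i\setminus\{p\}$, and $\langle U_n\rangle_{\leqt_f}$ is such an open set. Modulo this observation, the argument is a routine transfinite-free induction feeding directly into Proposition \ref{proposition-Min-Sel-v29:1}; no further estimates are needed.
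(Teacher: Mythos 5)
Your overall strategy is exactly the paper's: build a sequence $\{U_n\}$ meeting the hypotheses of Proposition \ref{proposition-Min-Sel-v29:1} by alternately deleting a point of $X_0$ and of $X_1$ from the $f$-interior of the previous set. However, the inductive step as you wrote it has a genuine gap precisely at the property that the whole construction is designed to secure, namely $f\left(U_{n+1}^\cmpl\right)=q\in X_i$. Two things go wrong. First, to invoke Corollary \ref{corollary-Min-Sel-v27:1} with $V=\langle U_n\rangle_{\leqt_f}$ and $W=V\setminus\{q\}$ you must have $q\in\langle V\rangle_{\leqt_f}=\dblr{U_n}$, not merely $q\in\langle U_n\rangle_{\leqt_f}$ as you chose; the conclusion $f\left(W^\cmpl\right)=q$ is exactly the statement $q\lhd_f \left(\langle U_n\rangle_{\leqt_f}\right)^\cmpl$, and knowing only $f\left(U_n^\cmpl\cup\{q\}\right)=q$ says nothing about $f\left(\left(\langle U_n\rangle_{\leqt_f}\right)^\cmpl\cup\{q\}\right)$, since $\left(\langle U_n\rangle_{\leqt_f}\right)^\cmpl$ is a strictly larger closed set in general. (Your ``plan'' paragraph correctly announces the doubled operation $\dblr{\cdot}$ for this purpose, but the construction then quietly drops it.) Second, you set $U_{n+1}=\langle W\rangle_{\leqt_f}$ and then assert $U_{n+1}^\cmpl=W^\cmpl$; this is false in general, because $\langle W\rangle_{\leqt_f}$ is typically a proper subset of $W$, so even granting $f\left(W^\cmpl\right)=q$ you have not computed $f\left(U_{n+1}^\cmpl\right)$.

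Both slips are repairable, and the repair is what the paper actually does: choose $q_{n+1}\in\dblr{U_n}\cap X_i$ (nonempty by your density observation, since $\dblr{U_n}$ is an open neighbourhood of $p$ by Proposition \ref{proposition-Min-Sel-v26:1} and $p$-maximality) and set $U_{n+1}=\langle U_n\rangle_{\leqt_f}\setminus\{q_{n+1}\}$ itself, i.e.\ $U_{n+1}=W$ rather than $\langle W\rangle_{\leqt_f}$. Then Corollary \ref{corollary-Min-Sel-v27:1} gives $f\left(U_{n+1}^\cmpl\right)=q_{n+1}\in X_i$ and $p\in\langle U_{n+1}\rangle_{\leqt_f}\subset U_{n+1}\subset\langle U_n\rangle_{\leqt_f}$, which is all that Proposition \ref{proposition-Min-Sel-v29:1} requires. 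With these two corrections your argument coincides with the paper's proof.
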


\begin{proof}
  Let us show how to construct a sequence
  $\{U_n:n<\omega\}\subset \mathscr{T}(X)$ as the one in Proposition
  \ref{proposition-Min-Sel-v29:1} with respect to the point $p$ and a
  $p$-cut $(X_0,X_1)$ of $X$.  To this end, take a $p$-cut $(X_0,X_1)$
  and $U\in \mathscr{T}(X)$ with $p\in U$. Then, according to
  Proposition \ref{proposition-Min-Sel-v26:1},
  ${p\in \dblr{U}\in \mathscr{T}(X)}$ because $f$ is
  $p$-maximal. Hence, there exists a point $q_0\in \dblr{U}\cap
  X_0$. Setting $U_0=\langle U\rangle_{\leqt_f}\setminus\{q_0\}$,
  Corollary \ref{corollary-Min-Sel-v27:1} implies that
  $f\left(U^\cmpl_0\right)=q_0$ and
  ${p\in \langle U_0\rangle_{\leqt_f}\subset U_0\subset \langle
    U\rangle_{\leqt_f}}$. Similarly, taking
  $U_1=\langle U_0\rangle_{\leqt_f} \setminus\{q_1\}$ for some
  $q_1\in \dblr{U_0}\cap X_1$, we get that
  $f\left(U_1^\cmpl\right)=q_1$ and
  $p\in \langle U_1\rangle_{\leqt_f} \subset U_1\subset \langle
  U_0\rangle_{\leqt_f}$.  The construction can be carried on by
  induction. Thus, by Proposition \ref{proposition-Min-Sel-v29:1}, $X$
  is first countable at $p$.
\end{proof}

\section{Ordinal Decompositions Generated by Selections} 
\label{sec:select-pointw-maxim}

A \emph{neighbourhood} of a nonempty closed set $T$ in a space $X$ is
a set which contains $T$ in its interior. Thus, by a
\emph{neighbour\-hood base} at $T$ we mean a collection $\mathscr{H}$
of neighbourhoods of $T$ such that each (open) neighbourhood of $T$
contains a member of $\mathscr{H}$. Moreover, if $\mathscr{H}$
consists of open sets, then it is simply called a \emph{local base} at
$T$.\medskip

The following result is similar to Proposition
\ref{proposition-Min-Sel-v29:1} and offers both a simplification of
the proof and a natural generalisation of \cite[Lemma
4.2]{gutev-nogura:03a}.

\begin{proposition}
  \label{proposition-Min-Sel-v29:2}
  Let $\lambda$ be a limit ordinal, $f\in \sel[\mathscr{F}(X)]$ and
  $U_\alpha\in \mathscr{T}(X)$, $\alpha<\lambda$, be such that
  $\overline{U_{\alpha+1}}\subsetneq \langle
  U_{\alpha}\rangle_{\leqt_f}\subset U_\beta$ for every
  $\beta\leq\alpha<\lambda$. If
  $H_\lambda=\bigcap_{\alpha<\lambda} U_{\alpha}$ and
  $F_\lambda=\overline{H_\lambda^\cmpl}$, then
  $H_\lambda= \left[F_\lambda^\cmpl\right]_{\leqt_f}$ and the family
  $\left\{ U_\alpha:\alpha<\lambda\right\}$ forms a local base at
  $H_\lambda$. In particular, both sets $H_\lambda$ and $F_\lambda$
  are clopen modulo the point $f\left(F_\lambda\right)$.
\end{proposition}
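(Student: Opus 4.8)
The plan is to run essentially the same argument that proves Proposition \ref{proposition-Min-Sel-v29:1}, but organised around the set $H_\lambda$ instead of the point $p$, using the ``$f$-interior'' and ``$f$-closure'' machinery of Section \ref{sec:select-relat-sets}. First I would record the elementary topological facts about $H_\lambda$ and $F_\lambda$. The inclusions $\overline{U_{\alpha+1}}\subsetneq\langle U_\alpha\rangle_{\leqt_f}\subset U_\beta$ force the family $\{U_\alpha:\alpha<\lambda\}$ to be decreasing, so $H_\lambda=\bigcap_{\alpha<\lambda}U_\alpha$ is a nonempty closed set (it is closed being an intersection of closed sets — note $\overline{U_{\alpha+1}}\subset U_\alpha$ gives $H_\lambda=\bigcap_\alpha\overline{U_{\alpha+1}}$). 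Its complement $H_\lambda^\cmpl=\bigcup_{\alpha<\lambda}U_\alpha^\cmpl$ is a union of the closed sets $U_\alpha^\cmpl$, so $F_\lambda=\overline{H_\lambda^\cmpl}$ is the smallest closed set containing all of them; then $F_\lambda^\cmpl$ is an open set contained in $H_\lambda$, and $H_\lambda\setminus F_\lambda^\cmpl$ is contained in the boundary. So the content of the proposition is: $H_\lambda=\left[F_\lambda^\cmpl\right]_{\leqt_f}$, i.e.\ $H_\lambda=\langle F_\lambda^\cmpl\rangle_{\leqt_f}\cup\{f(F_\lambda)\}$ by Proposition \ref{proposition-Min-Sel-v26:1} (note $F_\lambda\ne X$ since $H_\lambda\ne\emptyset$), together with the local-base claim.

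Next I would identify the single ``special'' point. Consider the increasing sequence $F_\alpha=U_\alpha^\cmpl\in\mathscr{F}(X)$; it is $\tau_V$-convergent to $\overline{\bigcup_{\alpha<\lambda}U_\alpha^\cmpl}=F_\lambda$, hence $f(F_\lambda)=\lim_{\alpha\to\lambda}f(F_\alpha)$. (For a cofinal $\omega$-sequence this is literal $\tau_V$-convergence; in general one uses that an increasing net in $\mathscr{F}(X)$ converges to the closure of its union, a fact already used in the excerpt.) Call $q=f(F_\lambda)$. For any point $x\in H_\lambda=\bigcap_\alpha U_\alpha$ and any $\alpha<\lambda$ we have $x\in U_{\alpha+1}\subset\langle U_\alpha\rangle_{\leqt_f}$, so $f(F_\alpha\cup\{x\})=x$ by the definition \eqref{eq:Min-Sel-v27:2}; since $F_\alpha\cup\{x\}\to F_\lambda\cup\{x\}$ we get $f(F_\lambda\cup\{x\})=x$, i.e.\ $x\leqt_f F_\lambda$, that is $x\in\left[F_\lambda^\cmpl\right]_{\leqt_f}$. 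This gives $H_\lambda\subset\left[F_\lambda^\cmpl\right]_{\leqt_f}$. For the reverse inclusion, if $x\leqt_f F_\lambda$ then $f(F_\lambda\cup\{x\})=x$; were $x\notin H_\lambda$, pick $\alpha$ with $x\notin U_\alpha$, i.e.\ $x\in F_\alpha\subset F_\lambda$, forcing $f(F_\lambda\cup\{x\})=f(F_\lambda)=q$, so $x=q$ — but one checks $q\in H_\lambda$ directly: $q=\lim f(F_\alpha)$ and $f(F_{\alpha+1})=f(U_{\alpha+1}^\cmpl)\in\langle U_\alpha\rangle_{\leqt_f}\subset U_\beta$ for all $\beta\le\alpha$ (since $f(U_{\alpha+1}^\cmpl)\in U_{\alpha+1}\subset\langle U_\alpha\rangle_{\leqt_f}$ as $f$ is a selection), so the tail of the sequence $f(F_\alpha)$ lies in each $U_\beta$, whence $q\in\overline{U_\beta}\subset U_{\beta-1}$ — more carefully, $q\in\bigcap_\alpha U_\alpha=H_\lambda$. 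Either way $x\in H_\lambda$. Thus $H_\lambda=\left[F_\lambda^\cmpl\right]_{\leqt_f}$, and Proposition \ref{proposition-Min-Sel-v26:1} yields that both $H_\lambda$ and $F_\lambda$ are clopen modulo $q=f(F_\lambda)$ (the latter because $F_\lambda=H_\lambda^\cmpl\cup\{q\}$ with $H_\lambda^\cmpl$ open).

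Finally, the local-base assertion. Clearly each $U_\alpha\supset H_\lambda$ and is open, so it is a neighbourhood. Suppose $V\in\mathscr{T}(X)$ with $H_\lambda\subset V$ but $U_\alpha\setminus V\ne\emptyset$ for every $\alpha<\lambda$; choose $x_\alpha\in U_\alpha\setminus V$. Then, exactly as in the last paragraph of the proof of Proposition \ref{proposition-Min-Sel-v29:1}, $f(F_\alpha\cup\{x_{\alpha+1}\})=x_{\alpha+1}$ because $x_{\alpha+1}\in U_{\alpha+1}\subset\langle U_\alpha\rangle_{\leqt_f}$, while $F_\alpha\cup\{x_{\alpha+1}\}\to F_\lambda$ (the added points are eventually absorbed since... — here one needs care; what one actually has is that any $\tau_V$-open neighbourhood $\langle W_1,\dots,W_k\rangle$ of $F_\lambda$ eventually contains $F_\alpha\cup\{x_{\alpha+1}\}$: the $F_\alpha$ part is handled by $F_\alpha\to F_\lambda$, and $x_{\alpha+1}\in F_{\alpha+1}\subset F_\lambda\subset\bigcup W_j$ when... ) — more robustly, pass to the closure: since $\{x_\alpha\}\subset H_\lambda^\cmpl\subset F_\lambda$, the set $F_\alpha\cup\{x_{\alpha+1}\}$ lies between $F_\alpha$ and $F_\lambda$, and a net of closed sets squeezed between an increasing net converging to $F_\lambda$ and $F_\lambda$ itself also converges to $F_\lambda$. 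Hence $\lim_\alpha x_{\alpha+1}=\lim_\alpha f(F_\alpha\cup\{x_{\alpha+1}\})=f(F_\lambda)=q\in H_\lambda\subset V$, contradicting $x_{\alpha+1}\in X\setminus V$ (a closed set) for all $\alpha$. Therefore some $U_\alpha\subset V$, proving $\{U_\alpha:\alpha<\lambda\}$ is a local base at $H_\lambda$.

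The main obstacle is the convergence bookkeeping in the case $\mathrm{cf}(\lambda)>\omega$: one must be comfortable phrasing ``increasing net of closed sets converges to the closure of its union'' and ``a net squeezed between that and the limit also converges'' in the Vietoris topology, since continuity of $f$ is only literally about sequences when the hyperspace is metrizable. All the selection-theoretic steps reduce to Propositions \ref{proposition-Min-Sel-v26:1} and \ref{proposition-Min-Sel-v29:1} and the definitions in \eqref{eq:Min-Sel-v27:1}–\eqref{eq:Min-Sel-v27:2}; nothing new is needed there.
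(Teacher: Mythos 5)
Your overall architecture is the paper's: identify $q=f(F_\lambda)$ as the limit along an increasing net of complements, prove $H_\lambda=[F_\lambda^\cmpl]_{\leqt_f}$ by passing $f(F_\alpha\cup\{x\})=x$ to the limit, and derive the local-base claim by contradiction. Most of the steps are sound, including your squeeze argument for the $\tau_V$-convergence of nets sandwiched between $F_\alpha$ and $F_\lambda$. However, the one step on which everything else hinges --- that $q=f(F_\lambda)$ lies in $H_\lambda$ --- is justified by a false assertion. You write that $f\left(U_{\alpha+1}^\cmpl\right)\in U_{\alpha+1}$ ``as $f$ is a selection''; a selection satisfies $f(S)\in S$, so $f\left(U_{\alpha+1}^\cmpl\right)\in U_{\alpha+1}^\cmpl=F_{\alpha+1}$, which is the complement of where you need it. Consequently $\lim_{\alpha}f(F_\alpha)=f(F_\lambda)$ only places $f(F_\lambda)$ in $F_\lambda$, and nothing in your argument puts it in $H_\lambda$. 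This is not cosmetic: your reverse inclusion ends with ``$x=q$, and $q\in H_\lambda$''; your local-base contradiction is ``$q\in H_\lambda\subset V$ versus $q\notin V$''; and the conclusions that $H_\lambda$ and $F_\lambda$ are clopen modulo $f(F_\lambda)$ (indeed, that $H_\lambda\neq\emptyset$, which you assume at the outset without proof) all require $f(F_\lambda)\in H_\lambda$.

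The repair is the device the paper actually uses: do not evaluate $f$ on $F_\alpha$ alone, but on $F_\alpha\cup\{q_\alpha\}$ for auxiliary points $q_\alpha\in U_{\alpha+1}\setminus H_\lambda$, which exist because $H_\lambda\subset U_{\alpha+2}\subsetneq U_{\alpha+1}$. Since $q_\alpha\in U_{\alpha+1}\subset\langle U_\alpha\rangle_{\leqt_f}$, one gets $f(F_\alpha\cup\{q_\alpha\})=q_\alpha$, and since $q_\alpha\notin H_\lambda$ each $q_\alpha$ is absorbed into some later $F_\mu$, so the net $F_\alpha\cup\{q_\alpha\}$ still $\tau_V$-converges to $F_\lambda$; hence $f(F_\lambda)=\lim_\alpha q_\alpha$. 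Now the tail of $(q_\alpha)_{\alpha\geq\beta}$ lies in $U_\beta$ for every $\beta<\lambda$, so $f(F_\lambda)\in\bigcap_{\beta<\lambda}\overline{U_\beta}=H_\lambda$. With this in hand the rest of your proof goes through essentially verbatim; in fact your local-base argument already contains this computation implicitly (your $x_{\alpha+1}$ play exactly the role of the $q_\alpha$) --- you just never draw the conclusion that their limit lies in every $\overline{U_\beta}$.
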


\begin{proof}
  Since
  $H_\lambda\neq U_{\alpha+1}\subset \overline{U_{\alpha+1}}\subset
  \langle U_{\alpha}\rangle_{\leqt_f}\subset U_\alpha$,
  $\alpha<\lambda$, we can always take points
  $q_\alpha\in U_{\alpha+1} \setminus H_\lambda\subset \langle
  U_{\alpha}\rangle_{\leqt_f}$, $\alpha<\lambda$. For such points, it
  follows that
  \begin{equation}
    \label{eq:Ord-Dec-vgg:2}
    \lim_{\alpha<\lambda} q_\alpha=f(F_\lambda)\in H_\lambda.
  \end{equation}
  Indeed, the complements $F_\alpha=U_\alpha^\cmpl$, $\alpha<\lambda$,
  form an increasing net of closed sets with
  $F_\lambda=\overline{\bigcup_{\alpha<\lambda}F_\alpha}$. Moreover,
  by \eqref{eq:Min-Sel-v27:1} and \eqref{eq:Min-Sel-v27:2}, we have
  that $f(F_\alpha\cup\{q_\alpha\})=q_\alpha$ for each
  $\alpha<\lambda$. Furthermore, for each $\alpha<\lambda$ there
  exists $\mu\in [\alpha,\lambda)$ such that $q_\alpha\in F_\mu$
  because $q_\alpha\notin H_\lambda$, so
  $F_\alpha\cup\{q_\alpha\}\subset F_\mu\cup\{q_\mu\}$. Hence, by
  \cite[Proposition 2.1]{Gutev2024a}, the net
  $F_\alpha\cup\{q_\alpha\}$, $\alpha<\lambda$, is $\tau_V$-convergent
  to
  $F_\lambda=
  \overline{\bigcup_{\alpha<\lambda}(F_\alpha\cup\{q_\alpha\})}$.
  Accordingly,
  $f(F_\lambda)=\lim_{\alpha<\lambda} f(F_\alpha\cup\{q_\alpha\})=
  \lim_{\alpha<\lambda} q_\alpha$ and
  $f(F_\lambda)\in
  \bigcap_{\alpha<\lambda}\overline{U_\alpha}=H_\lambda$. This also
  shows that $\{U_\alpha:\alpha<\lambda\}$ forms a local base at
  $H_\lambda$. Namely, if $V\in \mathscr{T}(X)$ is such that
  $H_\lambda\subset V$ and $U_\alpha\setminus V\neq \emptyset$ for
  every $\alpha<\lambda$, then we can take points
  $q_\alpha\in U_{\alpha+1}\setminus V\subset U_{\alpha+1}\setminus
  H_\lambda$, $\alpha<\lambda$. Hence, by \eqref{eq:Ord-Dec-vgg:2},
  $\lim_{\alpha<\lambda} q_\alpha=f(F_\lambda)\in H_\lambda\subset V$,
  but this is impossible because
  $\lim_{\alpha<\lambda} q_\alpha\notin V$. Finally, to see that
  $H_\lambda=\left[F_\lambda^\cmpl\right]_{\leqt_f}$, take a point
  $x\in H_\lambda$. Since
  $x\in\bigcap_{\alpha<\lambda}\langle U_\alpha\rangle$, as before,
  $f(F_\alpha\cup\{x\})=x$ for each $\alpha<\lambda$. Thus,
  $f(F_\lambda\cup\{x\})=\lim_{\alpha<\lambda} f(F_\alpha\cup\{x\})=
  \lim_{\alpha<\lambda} x=x$ because the net $F_\alpha\cup\{x\}$,
  $\alpha<\lambda$, is $\tau_V$-convergent to $F_\lambda\cup\{x\}$.
\end{proof}

The \emph{pseudocharacter} $\psi(p, X)$ of a space $X$ at a point
$p\in X$ is the smallest cardinal number $\kappa$ such that
$\{p\} = \bigcap \mathscr{U}$ for some family
$\mathscr{U}\subset \mathscr{T}(X)$ with $|\mathscr{U}|\leq
\kappa$. The following further property in the setting of selection
pointwise-maximal spaces provides a very simple proof of \cite[Theorem
4.3]{gutev-nogura:03a}.

\begin{proposition}
  \label{proposition-Min-Sel-v32:1}
  Let $X$ be a selection pointwise-maximal space, $p\in X$ be a
  non-isolated point, $f\in \sel[\mathscr{F}(X)]$ be a $p$-maximal
  selection and $\gamma=\psi(p,X)$. Then $X$ has a local  base
  ${\{U_\alpha:\alpha<\gamma\}}$ at the point $p$ such that for every
  $\lambda<\gamma$, 
  \begin{enumerate}[itemsep=2pt, label=\upshape{(\roman*)}]
  \item\label{item:Min-Sel-v30:1}
    $U_{\lambda+1}$ is a clopen set with $U_{\lambda+1}\subsetneq \langle
    U_{\lambda}\rangle_{\leqt_f}$.
  \item\label{item:Min-Sel-v30:2}
    $\bigcap_{\alpha<\lambda} U_\alpha= U_\lambda\cup
    \left\{f\left(U_\lambda^\cmpl\right)\right\}$ and
    $\psi\left(f\left(U_\lambda^\cmpl\right), \bigcap_{\alpha<\lambda}
      U_\alpha\right)\leq \omega$, whenever $\lambda$ is a limit
    ordinal.
  \end{enumerate}
\end{proposition}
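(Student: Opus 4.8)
The plan is to build the family by transfinite recursion on $\alpha<\gamma$, interlacing it with a fixed family $\{V_\alpha:\alpha<\gamma\}\subset\mathscr{T}(X)$ such that $p\in V_\alpha$ for each $\alpha$ and $\bigcap_{\alpha<\gamma}V_\alpha=\{p\}$ (available since $\gamma=\psi(p,X)$; note $\gamma\ge\omega$ as $p$ is non-isolated). The recursion keeps the $U_\alpha$ decreasing with $p\in U_\alpha$, and maintains the single condition $\overline{U_{\alpha+1}}\subsetneq\langle U_\alpha\rangle_{\leqt_f}\subset U_\beta$ for all $\beta\le\alpha$, which is exactly the hypothesis of Proposition \ref{proposition-Min-Sel-v29:2}. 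Two facts are used throughout: $X$ is zero-dimensional (Corollary \ref{corollary-Min-Sel-v32:1}), and, $f$ being $p$-maximal, $\langle U_\alpha\rangle_{\leqt_f}$ is an open neighbourhood of $p$ (Proposition \ref{proposition-Min-Sel-v26:1}) which, since $p$ is non-isolated, properly contains $\{p\}$.

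At a successor stage $\alpha+1$ (with $U_0=X$): pick $q\in\langle U_\alpha\rangle_{\leqt_f}\cap V_\alpha$ with $q\ne p$, and, using zero-dimensionality, a clopen set $U_{\alpha+1}$ with $p\in U_{\alpha+1}\subset(\langle U_\alpha\rangle_{\leqt_f}\cap V_\alpha)\setminus\{q\}$. Then $U_{\alpha+1}$ is clopen, $U_{\alpha+1}\subsetneq\langle U_\alpha\rangle_{\leqt_f}$ (this is (i)) and $U_{\alpha+1}\subset V_\alpha$, and since $\overline{U_{\alpha+1}}=U_{\alpha+1}$ the maintained condition persists.

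At a limit stage $\lambda<\gamma$, Proposition \ref{proposition-Min-Sel-v29:2} applies to $\{U_\alpha:\alpha<\lambda\}$: writing $H_\lambda=\bigcap_{\alpha<\lambda}U_\alpha$, $F_\lambda=\overline{H_\lambda^\cmpl}$ and $r_\lambda=f(F_\lambda)$, it gives that $H_\lambda=[F_\lambda^\cmpl]_{\leqt_f}$ is clopen modulo $r_\lambda$, that $\{U_\alpha:\alpha<\lambda\}$ is a local base at $H_\lambda$, and, by Proposition \ref{proposition-Min-Sel-v26:1}, that $H_\lambda=\langle F_\lambda^\cmpl\rangle_{\leqt_f}\cup\{r_\lambda\}$. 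I then set $U_\lambda:=\langle F_\lambda^\cmpl\rangle_{\leqt_f}$; it is open, equals $H_\lambda\setminus\{r_\lambda\}$, and has $U_\lambda^\cmpl=F_\lambda$ (indeed $U_\lambda^\cmpl=H_\lambda^\cmpl\cup\{r_\lambda\}$ is closed and $r_\lambda\in F_\lambda=\overline{H_\lambda^\cmpl}$), whence $f(U_\lambda^\cmpl)=r_\lambda$ and $\bigcap_{\alpha<\lambda}U_\alpha=U_\lambda\cup\{f(U_\lambda^\cmpl)\}$, which is the first half of (ii) (and $U_\lambda\subset U_\beta$ for $\beta<\lambda$ is clear). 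For the pseudocharacter estimate in (ii) I distinguish two cases: if $r_\lambda$ is isolated in $H_\lambda$ then $\psi(r_\lambda,H_\lambda)=1$; otherwise $r_\lambda\in\overline{H_\lambda\setminus\{r_\lambda\}}$ and, using that $F_\lambda=H_\lambda^\cmpl\cup\{r_\lambda\}$ forces $H_\lambda\cap F_\lambda=\{r_\lambda\}$, the pair $(H_\lambda\setminus\{r_\lambda\},H_\lambda^\cmpl)$ is an $r_\lambda$-cut of $X$, so $r_\lambda$ is a cut point, it carries an $r_\lambda$-maximal selection because $X$ is selection pointwise-maximal, and Corollary \ref{corollary-Min-Sel-v25:1} makes $X$ first countable at $r_\lambda$, giving $\psi(r_\lambda,H_\lambda)\le\psi(r_\lambda,X)\le\omega$ in this case too. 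Finally $r_\lambda\ne p$: since $H_\lambda$ is an intersection of the at most $|\lambda|$ clopen sets $U_{\beta+1}$ with $\beta+1<\lambda$, the equality $r_\lambda=p$ would combine with $\psi(p,H_\lambda)\le\omega$ to give $\psi(p,X)\le|\lambda|<\gamma$ (as $\gamma$ is a cardinal and $\lambda<\gamma$), contradicting $\gamma=\psi(p,X)$; hence $p\in H_\lambda\setminus\{r_\lambda\}=U_\lambda$ and the recursion proceeds.

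This completes the recursion, yielding (i) and (ii). For the local base property, note that $\{U_\alpha:\alpha<\gamma\}$ again satisfies the hypothesis of Proposition \ref{proposition-Min-Sel-v29:2} with the limit ordinal $\lambda=\gamma$, and that $U_{\beta+1}\subset V_\beta$ for every $\beta<\gamma$ forces $\bigcap_{\alpha<\gamma}U_\alpha=\{p\}$; so that proposition says at once that $\{U_\alpha:\alpha<\gamma\}$ is a local base at $\{p\}$, i.e.\ at $p$. I expect the main difficulty to lie in the limit step, where the estimate $\psi(f(U_\lambda^\cmpl),\bigcap_{\alpha<\lambda}U_\alpha)\le\omega$ of (ii) is needed in order to place $p$ inside $U_\lambda$ at all; the key observation making this available is the dichotomy that $f(U_\lambda^\cmpl)$ is either isolated in $\bigcap_{\alpha<\lambda}U_\alpha$ or a cut point of $X$, which is what brings Corollary \ref{corollary-Min-Sel-v25:1} into play.
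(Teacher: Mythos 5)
Your proof is correct and follows essentially the same route as the paper: transfinite recursion with $U_{\alpha+1}$ chosen clopen inside $V_\alpha\cap\langle U_\alpha\rangle_{\leqt_f}$ via Corollary \ref{corollary-Min-Sel-v32:1}, Proposition \ref{proposition-Min-Sel-v29:2} at limit stages to get $H_\lambda=\left[F_\lambda^\cmpl\right]_{\leqt_f}$ clopen modulo $f(F_\lambda)$, and the pseudocharacter bound in (ii) from the dichotomy that $f(F_\lambda)$ is either isolated in $H_\lambda$ or a cut point of $X$, handled by Corollary \ref{corollary-Min-Sel-v25:1}. The only cosmetic deviation is your argument that $f(F_\lambda)\neq p$ (via the pseudocharacter count using $\psi(f(F_\lambda),H_\lambda)\leq\omega$), where the paper instead observes that $f(F_\lambda)=p$ would force $H_\lambda=\left[F_\lambda^\cmpl\right]_{\leqt_f}=\{p\}$ by $p$-maximality, again contradicting $\lambda<\gamma=\psi(p,X)$.
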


\begin{proof}
  Since $\{p\}=\bigcap_{\alpha<\gamma} V_{\alpha+1}$ for some family
  $\{V_{\alpha+1}: \alpha<\gamma\}\subset \mathscr{T}(X)$, by
  Proposition \ref{proposition-Min-Sel-v29:2}, it suffices to subject
  \ref{item:Min-Sel-v30:1} to the additional condition that
  ${U_{\lambda+1}\subset V_{\lambda+1}}$. So, take $\lambda<\gamma$
  and set $U_0=X$. If $p\in U_\lambda$ for some
  $U_\lambda\in \mathscr{T}(X)$, then $p\in \langle U_\lambda\rangle$
  and by Corollary \ref{corollary-Min-Sel-v32:1},
  $p\in U_{\lambda+1}\subsetneq V_{\lambda+1}\cap \langle
  U_\lambda\rangle_{\leqt_f}$ for some clopen set
  $U_{\lambda+1}\subset X$. If $\lambda$ is a limit ordinal and
  $U_\alpha$ is defined for every $\alpha<\lambda$, set
  $H_\lambda=\bigcap_{\alpha<\lambda}U_\alpha$ and
  $F_\lambda=\overline{H_\lambda^\cmpl}$. Then by Proposition
  \ref{proposition-Min-Sel-v29:2},
  $H_\lambda= \left[F_\lambda^\cmpl\right]_{\leqt_f}$ is clopen modulo
  the point $f(F_\lambda)$. Moreover, $f(F_\lambda)\neq p$ because the
  selection $f$ is $p$-maximal. Indeed, according to
  \eqref{eq:Min-Sel-v27:1} and \eqref{eq:Min-Sel-v27:2},
  $f(F_\lambda)=p$ will imply that
  $H_\lambda=\left[F_\lambda^\cmpl\right]_{\leqt_f}=\{p\}$ which is
  impossible because $\lambda<\gamma=\psi(p,X)$. Thus,
  $U_\lambda=H_\lambda\setminus \left\{f(F_\lambda)\right\}$ is an
  open set with $p\in U_\lambda$. The remaining property in
  \ref{item:Min-Sel-v30:2} now follows from the fact that
  $f(F_\lambda)$ is a non-isolated point of $F_\lambda$. If
  $f(F_\lambda)$ is also not isolated in $H_\lambda$, then it is a cut
  point of $X$. Accordingly, by Corollary
  \ref{corollary-Min-Sel-v25:1}, $X$ is first countable at
  $f(F_\lambda)$.
\end{proof}

The \emph{character} $\chi(p, X)$ of a space $X$ at a point $p\in X$
is the smallest cardinal number $\kappa$ such that $X$ has a local
base $\mathscr{B}$ at $p$ with $|\mathscr{B}|\leq \kappa$. The
following property is implicitly present in Proposition
\ref{proposition-Min-Sel-v32:1}, and implies \cite[Corollary
5.3]{gutev-nogura:03a}.

\begin{corollary}
  \label{corollary-Min-Sel-v39:1}
  For a selection pointwise-maximal space $X$, the following holds.
  \begin{enumerate}[itemsep=2pt, label=\upshape{(\roman*)}]
  \item\label{item:Ord-Dec-vgg:1} ${\chi(q,X)=\psi(q,X)}$ for every
    $q\in X$.
  \item\label{item:Ord-Dec-vgg:2} The set
    $\{q\in X: \chi(q,X)\leq\omega\}$ is dense in $X$.
  \end{enumerate}
\end{corollary}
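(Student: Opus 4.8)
The plan is to read both statements off Proposition~\ref{proposition-Min-Sel-v32:1}, which for any non-isolated point $p$ of a selection pointwise-maximal space, and any $p$-maximal selection, already furnishes a local base at $p$ of cardinality exactly $\psi(p,X)$ with the extra structure recorded in \ref{item:Min-Sel-v30:1}--\ref{item:Min-Sel-v30:2}.

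For \ref{item:Ord-Dec-vgg:1}, the inequality $\psi(q,X)\le\chi(q,X)$ is automatic, since in a Hausdorff (hence $T_1$) space any local base at $q$ has intersection $\{q\}$. For the reverse I would treat isolated $q$ trivially, and for non-isolated $q$ apply Proposition~\ref{proposition-Min-Sel-v32:1} with $p=q$ and $\gamma=\psi(q,X)$: the local base it produces has cardinality $\gamma$, so $\chi(q,X)\le\psi(q,X)$.

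For \ref{item:Ord-Dec-vgg:2}, fix a nonempty open $V\subseteq X$ and any point $p\in V$, and put $\gamma=\psi(p,X)$. If $\gamma\le\omega$, then $\chi(p,X)=\psi(p,X)\le\omega$ by \ref{item:Ord-Dec-vgg:1} and $p$ itself is a point of $V$ of countable character. If $\gamma\ge\omega_1$, then $p$ is non-isolated; I would fix a $p$-maximal selection $f$, apply Proposition~\ref{proposition-Min-Sel-v32:1} to get a local base $\{U_\alpha:\alpha<\gamma\}$ at $p$ as there, and arrange $U_1\subseteq V$ (harmless, since $V$ may be inserted into the family of open sets with intersection $\{p\}$ used in that proof, $\gamma$ being infinite). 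Because $\omega$ is now a limit ordinal below $\gamma$, item \ref{item:Min-Sel-v30:2} applies with $\lambda=\omega$: the point $r=f(U_\omega^\cmpl)$ satisfies $\bigcap_{n<\omega}U_n=U_\omega\cup\{r\}$ and $\psi\bigl(r,\bigcap_{n<\omega}U_n\bigr)\le\omega$. Now each $U_n$ is open in $X$ (indeed $U_0=X$ and each $U_{n+1}$ is clopen by \ref{item:Min-Sel-v30:1}), so $\bigcap_{n<\omega}U_n$ is a countable intersection of open subsets of $X$; writing $\{r\}$ as a countable intersection of sets open in that subspace and expanding each to an open set of $X$ then exhibits $\{r\}$ as a countable intersection of open subsets of $X$, i.e.\ $\psi(r,X)\le\omega$, hence $\chi(r,X)\le\omega$ by \ref{item:Ord-Dec-vgg:1}. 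Since $r\in\bigcap_{n<\omega}U_n\subseteq U_1\subseteq V$, this point $r$ witnesses that $V$ meets $\{q\in X:\chi(q,X)\le\omega\}$, so that set is dense.

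The only step that is not pure formality is the passage from ``$\psi(r,\cdot)\le\omega$ inside the subspace $\bigcap_{n<\omega}U_n$'' to ``$\psi(r,X)\le\omega$'', and the hard part is precisely the observation that this subspace is itself a $G_\delta$ in $X$ because the $U_n$ were chosen clopen; once that is in hand the rest is bookkeeping around Proposition~\ref{proposition-Min-Sel-v32:1}, together with the innocuous normalisation $U_1\subseteq V$.
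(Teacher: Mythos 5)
Your proof is correct and follows essentially the same route as the paper: part \ref{item:Ord-Dec-vgg:1} is read off Proposition \ref{proposition-Min-Sel-v32:1}, and part \ref{item:Ord-Dec-vgg:2} comes from taking the point $q=f\left(U_\omega^\cmpl\right)$ and observing that, the sets $U_n$, $n<\omega$, being open, item \ref{item:Min-Sel-v30:2} at $\lambda=\omega$ gives $\psi(q,X)\leq\omega$. The only (cosmetic) difference is in localising to the given open set $V$: the paper first passes to a clopen subset of $V$ using zero-dimensionality and the fact that clopen subspaces are again selection pointwise-maximal, whereas you force $U_1\subset V$ directly in the construction; both are legitimate, and your explicit justification that $\bigcap_{n<\omega}U_n$ is a $G_\delta$ in $X$ fills in a step the paper leaves implicit.
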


\begin{proof}
  The property in \ref{item:Ord-Dec-vgg:1} is explicitly stated in
  Proposition \ref{proposition-Min-Sel-v32:1}. To see
  \ref{item:Ord-Dec-vgg:2}, we will use that $X$ is zero-dimensional,
  see Corollary \ref{corollary-Min-Sel-v32:1}. Since each clopen
  subset of $X$ is also a selection pointwise-maximal space, by
  \ref{item:Ord-Dec-vgg:1}, it now suffices to show that $X$ contains
  a point $q\in X$ with $\psi(q,X)\leq\omega$. This is trivial when
  $X$ has an isolated point. Otherwise, for a non-isolated point
  $p\in X$, a $p$-maximal selection $f\in \sel[\mathscr{F}(X)]$ and
  $\gamma=\psi(p,X)$, let $\{U_\alpha:\alpha<\gamma\}$ be as in
  Proposition \ref{proposition-Min-Sel-v32:1}. Then taking
  $q=f\left(U_\omega^\cmpl\right)$, it follows from this proposition
  that $\psi(q,X)\leq \omega$.
\end{proof}

For a space $X$, motivated by Corollary \ref{corollary-Min-Sel-v39:1},
we consider the subfamily $\Delta_\omega(X)$ of $\Delta(X)$ consisting
of all $H\in \Delta(X)$ such that $H$ is clopen or 
\begin{equation}
  \label{eq:Min-Sel-v39:1}
  H\ \text{is clopen modulo a point $q\in H$ with
    $\chi(q,H)\leq \omega$}.  
\end{equation}
Evidently, $\Delta_\omega(X)$ contains all singletons of
$X$. Furthermore, for a selection pointwise-maximal space, according
to Corollary \ref{corollary-Min-Sel-v39:1}, $\Delta_\omega(X)$
actually consists of all sets $H\in \Delta(X)$ such that $H$ is clopen
modulo a point $p\in H$ with $\chi(p,H)\leq\omega$.\medskip

A space $X$ is said to be \emph{well-orderable at a point $p\in X$}
\cite{gutev-nogura:03a} if there exists a regular cardinal $\gamma$
and a neighbourhood base
$\{H_{\alpha}:\alpha<\gamma\}\subset \Delta_\omega(X)$ at the point
$p$ such that for every $\lambda<\gamma$,
\begin{enumerate}[itemsep=2pt, label=(\thesection.\arabic*), series=base]
  \addtocounter{enumi}{2}
\item\label{item:Min-Sel-v33:2} $H_{\lambda+1}$ is a clopen set with
  $H_{\lambda+1}\subsetneq H_{\lambda}$.
\item\label{item:Min-Sel-v33:3} If $\lambda$ is a limit ordinal, then
  the family $\{H_{\alpha}:\alpha<\lambda\}$ forms a neighbourhood
  base at $H_{\lambda}$.
\end{enumerate}
In \cite{gutev-nogura:03a}, a neighbourhood base
$\{H_{\alpha}:\alpha<\gamma\}$ with the above properties was called a
\emph{$\gamma$-base} at $p$. Each isolated point $p\in X$ has a
$1$-base, we can simply take $H_0=\{p\}$. Thus, the essential
considerations will be for a non-isolated point $p\in X$.\medskip

Following the idea of selection pointwise-maximal spaces, we will say
that $X$ is \emph{pointwise well-orderable} if it is well-orderable at
each point of $X$. Regarding such spaces, we will clarify the ideas
underlying the following result obtained in \cite[Theorem
4.4]{gutev-nogura:03a}, and significantly simplify its proof.

\begin{theorem}[\cite{gutev-nogura:03a}]
  \label{theorem-point-extreme-complete}
  For a space $X$ with $\sel[\mathscr{F}(X)]\neq \emptyset$, the following
  conditions are equivalent\textup{:} 
  \begin{enumerate}
  \item\label{item:sa_points-v10:1} $X$ is a selection
    pointwise-maximal space.
  \item\label{item:sa_points-v10:2} $X$ is a pointwise well-orderable
    space. 
  \item\label{item:sa_points-v10:3} For each $p\in X$, the space $X$
    has an ordinal decomposition
    ${\mathscr{D}\subset \Delta_\omega(X)}$ with
    $\{p\}\in \mathscr{D}$.
  \end{enumerate}
\end{theorem}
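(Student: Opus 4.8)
The plan is to prove the three implications \ref{item:sa_points-v10:3}\,$\implies$\,\ref{item:sa_points-v10:1}\,$\implies$\,\ref{item:sa_points-v10:2}\,$\implies$\,\ref{item:sa_points-v10:3}, so that the heavy machinery developed in this section is used exactly where it is needed. For \ref{item:sa_points-v10:3}\,$\implies$\,\ref{item:sa_points-v10:1}, fix $p\in X$ and an ordinal decomposition $\eta:X\to[0,\gamma]$ with $\mathscr{D}\subset\Delta_\omega(X)$ and $\eta^{-1}(\gamma)=\{p\}$. For each limit $\lambda\leq\gamma$ the fibre $\eta^{-1}(\lambda)$ is clopen modulo a point $q_\lambda$ with $\chi(q_\lambda,\eta^{-1}(\lambda))\leq\omega$, hence by Corollary \ref{corollary-Min-Sel-v35:2} applied inside $\eta^{-1}(\lambda)$ (note $\sel[\mathscr{F}(X)]\neq\emptyset$ restricts to each clopen-modulo-a-point fibre) there is a $q_\lambda$-minimal $g_\lambda\in\sel[\mathscr{F}(\eta^{-1}(\lambda))]$; on successor and isolated fibres pick any selection. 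Theorem \ref{theorem-Min-Sel-v35:1} then yields a continuous $f=\bigvee_{\alpha\leq\gamma}g_\alpha$, which is $p$-maximal because $\eta^{-1}(\gamma)=\{p\}$. Since $p$ was arbitrary, $X$ is selection pointwise-maximal.

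For \ref{item:sa_points-v10:1}\,$\implies$\,\ref{item:sa_points-v10:2}, fix $p\in X$; the isolated case is trivial via $H_0=\{p\}$, so assume $p$ non-isolated, take a $p$-maximal $f\in\sel[\mathscr{F}(X)]$ and $\gamma=\psi(p,X)$, which is regular after possibly refining (one may assume $\gamma$ regular by Corollary \ref{corollary-Min-Sel-v39:1}\ref{item:Ord-Dec-vgg:1}, as $\chi(p,X)=\psi(p,X)$ and a local base of minimal size can be re-indexed cofinally). Apply Proposition \ref{proposition-Min-Sel-v32:1} to get the local base $\{U_\alpha:\alpha<\gamma\}$ with properties \ref{item:Min-Sel-v30:1} and \ref{item:Min-Sel-v30:2}. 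The $\gamma$-base $\{H_\alpha:\alpha<\gamma\}$ is then read off directly: at successors put $H_{\lambda+1}=U_{\lambda+1}$, which is clopen with $H_{\lambda+1}\subsetneq\langle U_\lambda\rangle_{\leqt_f}\subset H_\lambda$ (so \ref{item:Min-Sel-v33:2} holds), and at limits put $H_\lambda=\bigcap_{\alpha<\lambda}U_\alpha=U_\lambda\cup\{f(U_\lambda^\cmpl)\}$, which is clopen modulo the point $f(U_\lambda^\cmpl)$ whose character in $H_\lambda$ is countable by \ref{item:Min-Sel-v30:2}, hence $H_\lambda\in\Delta_\omega(X)$; Proposition \ref{proposition-Min-Sel-v32:1} (via Proposition \ref{proposition-Min-Sel-v29:2}) also gives that $\{H_\alpha:\alpha<\lambda\}$ is a neighbourhood base at $H_\lambda$, which is \ref{item:Min-Sel-v33:3}. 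Thus $X$ is well-orderable at $p$.

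For \ref{item:sa_points-v10:2}\,$\implies$\,\ref{item:sa_points-v10:3}, fix $p\in X$ and a $\gamma$-base $\{H_\alpha:\alpha<\gamma\}\subset\Delta_\omega(X)$ at $p$. Define $\eta:X\to[0,\gamma]$ by declaring $\eta(x)=\alpha$ to be the least ordinal with $x\notin H_{\alpha+1}$ when such exists, and $\eta(x)=\gamma$ when $x\in\bigcap_{\alpha<\gamma}H_{\alpha+1}=\{p\}$ (the intersection being $\{p\}$ since the base is a neighbourhood base at $p$ and property \ref{item:Min-Sel-v33:2} forces the $H_{\alpha+1}$ to be strictly decreasing clopen sets). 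One checks $\eta^{-1}([\lambda,\gamma])=\bigcap_{\alpha<\lambda}H_{\alpha+1}$ for every $\lambda$: at successors this is the clopen $H_\lambda$-type set, at limits it equals $H_\lambda\in\Delta_\omega(X)$ by the neighbourhood-base condition \ref{item:Min-Sel-v33:3}. Hence each fibre $\eta^{-1}(\alpha)$ is the difference of two such sets and lies in $\Delta_\omega(X)$; continuity and closedness of $\eta$ follow because every $\eta^{-1}([\lambda,\gamma])$ is closed and, being an $H$-type set from the base, is clopen modulo a point, which is exactly what makes $\eta$ a genuine ordinal (not merely quasi-ordinal) decomposition — the limit-fibre clopenness-modulo-a-point is the mechanism by which the quotient topology on $\mathscr{D}$ matches the ordinal topology. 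The main obstacle across the whole argument is bookkeeping in this last implication: verifying that the map $\eta$ is a \emph{closed} quotient (so $\mathscr{D}$ with the quotient topology is homeomorphic to $[0,\gamma]$, not just continuously mapped onto it) requires carefully using \ref{item:Min-Sel-v33:3} at every limit stage to see that a set whose $\eta$-image is closed in $[0,\gamma]$ pulls back to a closed set, and dually that preimages of ordinal-topology basic opens are open; this is where the clopen-modulo-a-point hypothesis on the $H_\alpha$ does the real work, and it is worth spelling out one limit case in detail.
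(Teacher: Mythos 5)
Your plan follows the same cycle of implications and the same constructions as the paper's own proof, but there is one step in \ref{item:sa_points-v10:3}\,$\implies$\,\ref{item:sa_points-v10:1} that does not go through as written. To produce the $q_\lambda$-minimal selection $g_\lambda$ on a limit fibre you invoke Corollary \ref{corollary-Min-Sel-v35:2}, whose hypothesis is that $\{q_\lambda\}$ is a countable intersection of \emph{clopen} subsets of $\eta^{-1}(\lambda)$. Membership of $\eta^{-1}(\lambda)$ in $\Delta_\omega(X)$ only gives $\chi\left(q_\lambda,\eta^{-1}(\lambda)\right)\leq\omega$, i.e.\ a countable local base of open sets, and countable character alone does not yield clopen sets. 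The missing ingredient is that condition \ref{item:sa_points-v10:3}, holding at \emph{every} point of $X$, forces $X$ to be zero-dimensional (Proposition \ref{proposition-Ordinal-Decomp-2nd:1}\ref{item:Ordinal-Decomp-2nd:2} applied at each point); only then can the countable local base at $q_\lambda$ in the fibre be refined to a clopen one, so that Corollary \ref{corollary-Min-Sel-v35:2} applies. This is exactly the detour the paper takes, and without it your appeal to that corollary is unjustified. A related omission in the same implication: condition \ref{item:sa_points-v10:3} only gives $\{p\}\in\mathscr{D}$, not $\eta^{-1}(\gamma)=\{p\}$; one must first observe that $(\lambda,\gamma]\uplus[0,\lambda]$ is again the ordinal space $[0,\gamma]$ and re-index so that the singleton fibre sits on top.

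The other two implications are essentially the paper's argument. In \ref{item:sa_points-v10:1}\,$\implies$\,\ref{item:sa_points-v10:2} your construction of the $\gamma$-base from Proposition \ref{proposition-Min-Sel-v32:1} is the paper's, and your remark on the regularity of $\gamma$ (a strictly decreasing local base of minimal length forces $\chi(p,X)$ to be regular) is a reasonable supplement that the paper leaves implicit; note only that \ref{item:Min-Sel-v30:2} records the \emph{pseudo}character, and the countable \emph{character} of $f\left(U_\lambda^\cmpl\right)$ in $H_\lambda$ really comes from the first countability established via Corollary \ref{corollary-Min-Sel-v25:1} inside the proof of Proposition \ref{proposition-Min-Sel-v32:1}. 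In \ref{item:sa_points-v10:2}\,$\implies$\,\ref{item:sa_points-v10:3} you correctly identify \ref{item:Min-Sel-v33:3} at limit stages as the mechanism for closedness of $\eta$, but you defer the verification; it is short and should be written out (given open $U\supset\eta^{-1}(\lambda)$, note $H_\lambda\subset U\cup H_{\lambda+1}$, apply \ref{item:Min-Sel-v33:3} to get $H_\alpha\subset U\cup H_{\lambda+1}$ for some $\alpha<\lambda$, hence $\eta^{-1}((\alpha,\lambda])\subset U$), as should the reason the modulo-point $q_\lambda$ of $H_\lambda$ lies outside $H_{\lambda+1}$ (otherwise $H_\lambda=(H_\lambda\setminus\{q_\lambda\})\cup H_{\lambda+1}$ would be open, contradicting \ref{item:Min-Sel-v33:2} and \ref{item:Min-Sel-v33:3}), which is what puts the limit fibres in $\Delta_\omega(X)$.
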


\begin{proof}
  In this proof, $p\in X$ is a fixed non-isolated point.\smallskip
  
  \ref{item:sa_points-v10:1}$\implies$\ref{item:sa_points-v10:2}. Let
  $f\in\sel[\mathscr{F}(X)]$ be a $p$-maximal selection and
  $\{U_\alpha:\alpha<\gamma\}$ be as in Proposition
  \ref{proposition-Min-Sel-v32:1}, where $\gamma=\psi(p,X)$. For
  $\lambda<\gamma$, set
  $H_\lambda=U_\lambda\cup\left\{f\left(U_\lambda^\cmpl\right)\right\}$
  if $\lambda$ is a limit ordinal, and $H_\lambda=U_\lambda$
  otherwise. Then by Propositions \ref{proposition-Min-Sel-v29:2} and
  \ref{proposition-Min-Sel-v32:1}, the family
  $\{H_\alpha:\alpha<\gamma\}$ is a $\gamma$-base at $p$.\smallskip

  \ref{item:sa_points-v10:2}$\implies$\ref{item:sa_points-v10:3}.  Let
  $\{H_{\alpha}:\alpha<\gamma\}$ be a $\gamma$-base at the point $p$
  with $H_0=X$. Next, using \ref{item:Min-Sel-v33:3}, define
  $\eta:X\to [0,\gamma]$ by
  $\eta(x)=\max\{\alpha\leq \gamma: x\in H_\alpha\}$, $x\in X$. Then
  $\eta^{-1}((\alpha,\gamma])=H_{\alpha+1}$ and
  $\eta^{-1}((\alpha,\lambda])= H_{\alpha+1}\setminus H_{\lambda+1}$
  for every $\alpha<\lambda<\gamma$.  Hence, by
  \ref{item:Min-Sel-v33:2}, the map $\eta$ is continuous.  Evidently,
  $\eta^{-1}(\gamma)=\{p\}\in \Delta_\omega(X)$ and for every
  $U\in \mathscr{T}(X)$ with $p\in U$, there exists $\alpha<\gamma$
  such that $\eta^{-1}((\alpha,\gamma])\subset U$. Let
  $\lambda<\gamma$ be a limit ordinal. Then by
  \ref{item:Min-Sel-v33:2} and \ref{item:Min-Sel-v33:3}, $H_\lambda$
  is not an open set. Since $H_{\lambda+1}$ is clopen, it follows from
  \eqref{eq:Min-Sel-v39:1} that $H_\lambda$ is clopen modulo a point
  $q_\lambda\in H_\lambda\setminus H_{\lambda+1}$ with
  $\chi(q_\lambda,H_\lambda)\leq \omega$. Thus,
  $q_\lambda\in \eta^{-1}(\lambda)$ and
  $\eta^{-1}(\lambda)\in \Delta_\omega(X)$. Finally, take an open set
  $U\in \mathscr{T}(X)$ with $\eta^{-1}(\lambda)\subset U$. Since
  $U\cup H_{\lambda+1}$ is open and
  $H_\lambda\subset U\cup H_{\lambda+1}$, it follows from
  \ref{item:Min-Sel-v33:3} that $H_\alpha\subset U\cup H_{\lambda+1}$
  for some $\alpha<\lambda$. Accordingly,
  $\eta^{-1}((\alpha,\lambda])\subset U$ which implies that $\eta$ is
  a closed map as well, so
  $\mathscr{D}=\left\{\eta^{-1}(\alpha):
    \alpha\leq\gamma\right\}\subset \Delta_\omega(X)$ is an ordinal
  decomposition of $X$ with $\{p\}\in
  \mathscr{D}$. \smallskip

  \ref{item:sa_points-v10:3}$\implies$\ref{item:sa_points-v10:1}.  For
  ordinals $\lambda<\gamma$, the topological sum
  $(\lambda,\gamma]\uplus [0,\lambda]$ is actually the ordinal space
  $[0,\gamma]$. Hence, by \ref{item:sa_points-v10:3}, there exists a
  quotient closed map $\eta:X\to [0,\gamma]$ such that
  $\left\{\eta^{-1}(\alpha): \alpha\leq \gamma\right\}\subset
  \Delta_\omega(X)$ and $\eta^{-1}(\gamma)=\{p\}$. Take a limit
  ordinal $\lambda\leq\gamma$. Since
  $\eta^{-1}(\lambda)\in \Delta_\omega(X)$ and is not a clopen set, it
  is clopen modulo a point $q_\lambda\in \eta^{-1}(\lambda)$ with
  $\chi(q_\lambda, \eta^{-1}(\lambda))\leq \omega$, see
  \eqref{eq:Min-Sel-v39:1}. Moreover, by \ref{item:sa_points-v10:3}
  and Proposition \ref{proposition-Ordinal-Decomp-2nd:1}, the space
  $X$ is zero-dimen\-sional. Therefore, by Corollary
  \ref{corollary-Min-Sel-v35:2}, $\eta^{-1}(\lambda)$ has a
  $q_\lambda$-minimal selection
  $g_\lambda\in
  \sel\left[\mathscr{F}\left(\eta^{-1}(\lambda)\right)\right]$ because
  $\sel[\mathscr{F}(X)]\neq \emptyset$. For any other ordinal
  $\alpha\leq \gamma$, take a selection
  $g_\alpha\in \sel\left[\mathscr{F}\left(\eta^{-1}(\alpha)
    \right)\right]$. Then, according to \eqref{eq:Min-Sel-v35:3} and
  Theorem \ref{theorem-Min-Sel-v35:1}, the resulting selection
  $f=\bigvee_{\alpha\leq\gamma}g_\alpha\in \sel[\mathscr{F}(X)]$ is
  $p$-maximal.
\end{proof}

\section{Point-Extreme Selections} 
\label{sec:point-extr-select}

Subspaces of orderable spaces are not necessarily orderable, they are
termed \emph{suborderable}. A space $X$ is \emph{strongly
  zero-dimensional} if its \emph{covering dimension} is zero. A space
$X$ is \emph{ultranormal} if every two disjoint closed subsets are
contained in disjoint clopen subsets; equivalently, if its \emph{large
  inductive dimension} is zero. In the realm of normal spaces, $X$ is
ultranormal if and only if it is strongly zero-dimensional. Each
totally disconnected suborderable space is ultranormal. This was
essentially shown in H. Herrlich \cite[Lemma 1]{MR0185564} and
explicitly stated in S. Purisch \cite[Proposition
2.3]{purisch:77}.\label{page:suborderable-zero} Moreover, each
suborderable space is both countably paracompact
\cite{Ball1954,MR0063646} and collectionwise normal \cite{ MR0093753,
  MR0257985}, see also \cite{engelking:89}.\medskip

Several authors contributed to the following fundamental result about
the orderability of the \emph{{\v C}ech-Stone compactification}
$\beta X$ of a Tychonoff space $X$. Below we state only a partial case
of this result.

\begin{theorem}[\cite{artico-marconi-pelant-rotter-tkachenko:02,
    douwen:90, garcia-ferreira-sanchis:04, glicksber:59,
    mill-wattel:81, miyazaki:01b,
    venkataraman-rajagopalan-soundararajan:72}]
  \label{theorem-Min-Sel-v40:1}\
    \begin{enumerate}[itemsep=1pt, label=\upshape{(\roman*)}]
    \item A compact space $Y$ is orderable if and only if
  $\sel[\mathscr{F}(Y)]\neq \emptyset$. 
\item If $X$ is a Tychonoff space and $\beta X$ is orderable, then $X$
  is pseudocompact.
\item If $X$ is a pseudocompact space with $\sel[\mathscr{F}(X)]\neq
  \emptyset$, then $\sel[\mathscr{F}(\beta X)]\neq \emptyset$. 
\end{enumerate}
\end{theorem}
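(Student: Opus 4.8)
This theorem compiles results of several authors (the works cited above); rather than one argument I would assemble it from three essentially independent pieces. \emph{Part (i).} If $Y$ is orderable, fix a compatible linear order on $Y$; every $S\in\mathscr{F}(Y)$ is compact, hence has a least element, so $S\mapsto\min S$ is a selection whose Vietoris continuity is routine (if $\min S=m$ and $(a,b)\ni m$ is a basic order-neighbourhood, then $S\in\langle(a,\to)\rangle\cap\langle(\leftarrow,b),Y\rangle$ and $\min T\in(a,b)$ for every $T$ in this Vietoris-open set, the endpoints of $Y$ being handled with half-open rays). Hence $\sel[\mathscr{F}(Y)]\neq\emptyset$. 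The converse is the substantial half, the van Mill--Wattel theorem \cite{mill-wattel:81}: from $f\in\sel[\mathscr{F}(Y)]$ one has to produce a compatible linear order on $Y$. The natural candidate $x\leq_f y\iff f(\{x,y\})=x$ is total and antisymmetric, and its open rays $\{y:y<_f x\}$, $\{y:x<_f y\}$ are open in $Y$ directly from continuity of $f$ at two-point sets, but it need not be transitive; extracting a genuine compatible order from $f$ is where compactness of $Y$ enters in an essential way. Once such an order is at hand, its order topology is $T_2$ and coarser than that of $Y$, hence equal to it by compactness, so $Y$ is orderable.

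\emph{Part (ii).} Suppose $\beta X$ is orderable and, for a contradiction, $X$ is not pseudocompact. Then $X$ carries an infinite, pairwise disjoint, locally finite family $\{U_n:n<\omega\}$ of nonempty open sets; choose $x_n\in U_n$ and, by complete regularity, $\varphi_n:X\to[0,1]$ with $\varphi_n(x_n)=1$ and $\varphi_n$ vanishing outside $U_n$. Then $D=\{x_n:n<\omega\}$ is a countable closed discrete subset of $X$, and it is $C^*$-embedded, since any bounded $h:D\to\mathbb{R}$ extends to $\sum_{n<\omega}h(x_n)\varphi_n$, a function continuous by local finiteness. Hence $D$ is $C^*$-embedded in $\beta X$ too, so its closure there is a copy of $\beta D\cong\beta\mathbb{N}$ and is a closed --- in particular compact --- subspace of $\beta X$. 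But every subspace of an orderable space is suborderable, and a compact suborderable space is orderable (its order topology is $T_2$ and coarser than the compact subspace topology, hence equal to it); so $\beta\mathbb{N}$ would be a separable orderable space. This is impossible, because a separable orderable space has cardinality at most $2^{\aleph_0}$ --- each cut of a countable dense subset being filled by at most two points --- whereas $|\beta\mathbb{N}|=2^{2^{\aleph_0}}$. Therefore $X$ is pseudocompact. This is in essence the argument of the cited papers, e.g.\ \cite{venkataraman-rajagopalan-soundararajan:72, douwen:90}.

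\emph{Part (iii).} Let $X$ be pseudocompact with $f\in\sel[\mathscr{F}(X)]$; by part (i) it suffices to show that $\beta X$ is orderable. From $f$ one obtains, as in part (i), a relation organizing $X$ into a suborderable space; pseudocompactness is then precisely what forces the natural ordered compactification of $X$ --- a compact orderable space having $X$ as a dense $C^*$-embedded subspace --- to coincide with $\beta X$, whence $\sel[\mathscr{F}(\beta X)]\neq\emptyset$ by part (i); this is carried out in \cite{garcia-ferreira-sanchis:04, miyazaki:01b}. The genuinely hard inputs in the whole statement are the van Mill--Wattel direction of (i) --- manufacturing a compatible linear order on a compact space from nothing but a hyperspace selection --- and this ordered-compactification step of (iii); the forward implication of (i) and all of (ii) are elementary bookkeeping with the order topology, $C^*$-embeddings, and cardinal arithmetic.
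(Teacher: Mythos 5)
The paper states this theorem without proof, purely as a compilation of results from the cited literature, so your treatment is in essence the same approach: you correctly defer the two genuinely hard ingredients (the van Mill--Wattel direction of (i) and the pseudocompactness step of (iii)) to those same sources. The elementary pieces you do supply --- the $\min$-selection on a compact orderable space, and for (ii) the closed copy of $\beta\mathbb{N}$ in $\beta X$ together with the cut-counting bound on separable orderable spaces --- are correct.
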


Based on this result, we will obtain the following consequence of Theorem
\ref{theorem-point-extreme-complete}.

\begin{theorem}
  \label{theorem-Min-Sel-v40:2}
  If $X$ is a pseudocompact selection pointwise-maximal space, then
  $\beta X$ has a $p$-maximal selection
  $f\in \sel[\mathscr{F}(\beta X)]$ for every $p\in X$.
\end{theorem}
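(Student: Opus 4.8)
The plan is to combine Theorem \ref{theorem-point-extreme-complete} with Theorem \ref{theorem-Min-Sel-v40:1} and the ordinal-decomposition machinery of Theorem \ref{theorem-Min-Sel-v35:1}. Fix $p\in X$. First I would note that by Theorem \ref{theorem-Min-Sel-v40:1}(iii), $\sel[\mathscr{F}(\beta X)]\neq\emptyset$, and by Theorem \ref{theorem-Min-Sel-v40:1}(i) the compact space $\beta X$ is orderable; so $\beta X$ already admits continuous selections. The task is to upgrade this to a $p$-maximal one. By Theorem \ref{theorem-point-extreme-complete}, the space $X$ has an ordinal decomposition $\mathscr{D}\subset\Delta_\omega(X)$ with $\{p\}\in\mathscr{D}$, i.e.\ a quotient closed map $\eta:X\to[0,\gamma]$ with $\eta^{-1}(\gamma)=\{p\}$ and each fibre clopen modulo a point (with countable character in the non-clopen case).

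The key step is to transport $\eta$ to $\beta X$. Since each non-final fibre $\eta^{-1}(\alpha)$ is clopen modulo a point and the tail sets $\eta^{-1}((\alpha,\gamma])$ are clopen in $X$, one shows $\{p\}=\bigcap_{\alpha<\gamma}\eta^{-1}((\alpha,\gamma])$ witnesses that $p$ has the appropriate pseudocharacter. I would pass to closures in $\beta X$: the clopen sets $\eta^{-1}((\alpha,\gamma])$ of $X$ have clopen closures $V_\alpha$ in $\beta X$ (clopen subsets of $X$ extend to clopen subsets of $\beta X$), and because $X$ is pseudocompact these closures still decrease to a set whose only point of $X$ is $p$; the point is to arrange that $\bigcap_\alpha V_\alpha=\{p\}$, using that the $\eta^{-1}((\alpha,\gamma])$ form a base at $p$ in $X$ together with pseudocompactness to rule out extra points of $\beta X\setminus X$ in the intersection. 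This yields a continuous closed surjection $\widetilde\eta:\beta X\to[0,\gamma]$ extending $\eta$, with $\widetilde\eta^{-1}(\gamma)=\{p\}$, whose fibres lie in $\Delta(\beta X)$ — for the limit fibres one checks that clopenness modulo a point is preserved (the distinguished point and its countable clopen base in the fibre persist in $\beta X$, or one invokes that the fibre of $\widetilde\eta$ is the closure of the corresponding fibre of $\eta$). Thus $\widetilde\eta$ is an ordinal decomposition of $\beta X$ with $\widetilde\eta^{-1}(\gamma)=\{p\}$.

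Having $\widetilde\eta$, I would invoke Theorem \ref{theorem-Min-Sel-v35:1}: for each $\alpha\leq\gamma$ choose $g_\alpha\in\sel[\mathscr{F}(\widetilde\eta^{-1}(\alpha))]$ — these exist because each fibre is a closed, hence compact and (being clopen modulo a point in a suborderable compact space) suborderable subspace of the orderable compact space $\beta X$, so Theorem \ref{theorem-Min-Sel-v40:1}(i) or the orderability of $\beta X$ gives a selection on each fibre; for the limit fibres $\widetilde\eta^{-1}(\lambda)$, which are clopen modulo $q_\lambda$ with $q_\lambda$ of countable clopen character, Corollary \ref{corollary-Min-Sel-v35:2} supplies a $q_\lambda$-minimal selection $g_\lambda$ (the fibre has nonempty $\sel$, and $\{q_\lambda\}$ is a countable intersection of clopen subsets of the fibre). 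Then $f=\bigvee_{\alpha\leq\gamma}g_\alpha$ is continuous by Theorem \ref{theorem-Min-Sel-v35:1}, and since $\widetilde\eta^{-1}(\gamma)=\{p\}$ the remark following Theorem \ref{theorem-Min-Sel-v35:2} gives that $f$ is $p$-maximal, as required.

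The main obstacle is the extension of the ordinal decomposition from $X$ to $\beta X$: one must verify that the clopen tails extend to clopen tails in $\beta X$ whose intersection is exactly $\{p\}$ (no stray points of the remainder), and that the limit fibres of the extended map remain clopen modulo a point with the right character — here pseudocompactness of $X$ is essential, both to keep $\bigcap_\alpha V_\alpha$ from acquiring points of $\beta X\setminus X$ and to ensure $\widetilde\eta$ is well-defined and closed. Once the decomposition is in place on $\beta X$, the rest is a direct application of the earlier theorems.
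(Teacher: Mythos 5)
Your proposal is correct and follows essentially the same route as the paper: extract an ordinal decomposition $\eta:X\to[0,\gamma]$ with $\eta^{-1}(\gamma)=\{p\}$ from Theorem \ref{theorem-point-extreme-complete}, transfer it to $\beta X$, and then glue a $q_\lambda$-minimal selection on each limit fibre using Corollary \ref{corollary-Min-Sel-v35:2} and Theorem \ref{theorem-Min-Sel-v35:1}. The one step you only gesture at --- that the closure in $\beta X$ of a limit fibre stays clopen modulo its distinguished point with countable character there --- is precisely the paper's Proposition \ref{proposition-Min-Sel-v40:2} (whose proof uses normality of $X$, obtained from the suborderability supplied by Theorem \ref{theorem-Min-Sel-v40:1}); note also that the paper identifies $(\beta\eta)^{-1}(\alpha)=\overline{\eta^{-1}(\alpha)}$ via the usco inverse of the closed map $\eta$ (Proposition \ref{proposition-Min-Sel-v40:1}), so pseudocompactness is not what keeps stray points of $\beta X\setminus X$ out of $\bigcap_\alpha V_\alpha$ --- it enters only through $\sel[\mathscr{F}(\beta X)]\neq\emptyset$.
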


To prepare for the proof of Theorem \ref{theorem-Min-Sel-v40:2},
let us recall that a set-valued mapping $\varphi:Y\to \mathscr{F}(Z)$
between spaces $Y$ and $Z$ is \emph{upper semi-continuous}, or
\emph{u.s.c.}, if $\varphi^\#[U]=\{y\in Y: \varphi(y)\subset U\}$ is
open in $Y$ for every open $U\subset Z$. A compact-valued u.s.c.\
mapping $\varphi:Y\to \mathscr{F}(Z)$ is commonly called
\emph{usco}. It has the property that
$\varphi[K]=\bigcup_{y\in K}\varphi(y)$ is a compact subset of $Z$ for
every compact set $K\subset Y$. This implies the following interesting
property of extending closed surjective maps.

\begin{proposition}
  \label{proposition-Min-Sel-v40:1}
  Let $f:X\to Y$ be a closed surjective map, where $X$ is a Tychonoff
  space and $Y$ is a compact space. If $\beta f:\beta X\to Y$ is the
  corresponding extension to the {\v C}ech-Stone compactification of
  $X$, then $(\beta f)^{-1}(y)= \overline{f^{-1}(y)}$ for every $y\in
  Y$. 
\end{proposition}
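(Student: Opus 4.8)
The plan is to show that the map $\varphi:Y\to \mathscr{F}(\beta X)$ defined by $\varphi(y)=\overline{f^{-1}(y)}$, with the closure taken in $\beta X$, is usco, and then to extract the statement from the property of usco mappings recalled just before the proposition. Each value $\varphi(y)$ is nonempty since $f$ is surjective, and it is compact as a closed subset of the compact space $\beta X$; so it remains only to check that $\varphi$ is upper semi-continuous.

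To this end, let $U\subset \beta X$ be open with $\varphi(y)\subset U$. Since $\beta X$ is compact Hausdorff, hence normal, there is an open set $V$ with $\varphi(y)\subset V\subset \overline{V}\subset U$. Then $f^{-1}(y)\subset V\cap X$, so $f^{-1}(y)$ is disjoint from the closed subset $X\setminus V$ of $X$; because $f$ is a closed map, $f(X\setminus V)$ is closed in $Y$ and misses $y$, hence $W=Y\setminus f(X\setminus V)$ is an open neighbourhood of $y$. If $y'\in W$, then $f^{-1}(y')\subset V$, and therefore $\varphi(y')=\overline{f^{-1}(y')}\subset \overline{V}\subset U$. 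Thus $W\subset \varphi^\#[U]$, which shows that $\varphi^\#[U]$ is open; so $\varphi$ is usco. This step, in which the hypothesis that $f$ is closed is used in an essential way, is the only part of the argument that is not entirely routine.

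Now $\varphi[Y]=\bigcup_{y\in Y}\varphi(y)$ is compact because $Y$ is compact and $\varphi$ is usco, and so $\varphi[Y]$ is closed in $\beta X$; since $\varphi(y)\supset f^{-1}(y)$ for every $y$ and $f$ is onto, $\varphi[Y]\supset \bigcup_{y\in Y}f^{-1}(y)=X$, whence $\varphi[Y]=\beta X$ by the density of $X$ in $\beta X$. Finally, fix $y\in Y$. As $\beta f$ extends $f$ we have $f^{-1}(y)\subset (\beta f)^{-1}(y)$, and the latter set is closed, so $\varphi(y)=\overline{f^{-1}(y)}\subset (\beta f)^{-1}(y)$. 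For the reverse inclusion, take $z\in (\beta f)^{-1}(y)$; since $\varphi[Y]=\beta X$, there is $y'\in Y$ with $z\in \varphi(y')\subset (\beta f)^{-1}(y')$, and therefore $y=\beta f(z)=y'$, i.e.\ $z\in \varphi(y)$. Hence $(\beta f)^{-1}(y)=\varphi(y)=\overline{f^{-1}(y)}$, as claimed. (One can also bypass the usco machinery and prove $(\beta f)^{-1}(y)\subset\overline{f^{-1}(y)}$ directly: if $z\notin\overline{f^{-1}(y)}$, choose an open $O$ in $\beta X$ with $z\in O$ and $\overline{O}\cap f^{-1}(y)=\emptyset$, observe that $A=\overline{O}\cap X$ is closed in $X$ with $y\notin f(A)$, and derive a contradiction from the density of $X$ in the nonempty open set $(\beta f)^{-1}\!\left(Y\setminus f(A)\right)\cap O$.)
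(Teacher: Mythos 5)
Your proof is correct and follows essentially the same route as the paper: both define $\varphi(y)=\overline{f^{-1}(y)}$, verify that $\varphi$ is usco (the paper deduces this from the upper semi-continuity of $f^{-1}$, while you check it directly via normality of $\beta X$ and closedness of $f$), and conclude from the compactness of $\varphi[Y]$ and the density of $X$ that $\varphi[Y]=\beta X$, whence $\varphi(y)=(\beta f)^{-1}(y)$. You merely spell out the final disjointness step that the paper leaves implicit, and your parenthetical direct argument is a valid alternative.
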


\begin{proof}
  Since $f$ is closed, the inverse map $f^{-1}:Y\to \mathscr{F}(X)$ is
  u.s.c. Accordingly, the mapping $\varphi:Y\to \mathscr{F}(\beta X)$
  defined by $\varphi(y)=\overline{f^{-1}(y)}$, $y\in Y$, is
  usco. Moreover, $\varphi(y)\subset (\beta f)^{-1}(y)$ for every
  $y\in Y$. Finally, since $X$ is dense in $\beta X$ and $\varphi[Y]$
  is a compact subset of $\beta X$, we get that $\varphi[Y]=\beta X$. 
\end{proof}

We also need the following special case of Proposition
\ref{proposition-Min-Sel-v40:1}.

\begin{proposition}
  \label{proposition-Min-Sel-v40:2}
  If $X$ is a zero-dimensional normal space and
  $H\in \Delta_\omega(X)$, then
  $\overline{H}\in \Delta_\omega(\beta X)$.
\end{proposition}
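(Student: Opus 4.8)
The plan is to reduce the statement to a single essential case and then, using the zero-dimensionality of $X$ and the countable character of the exceptional point of $H$, produce a closed continuous surjection of $H$ onto $[0,\omega]$ to which Proposition \ref{proposition-Min-Sel-v40:1} applies; throughout, $\overline{\,\cdot\,}$ denotes closure in $\beta X$, which exists because a normal Hausdorff space is Tychonoff. First dispose of the easy cases. If $H$ is clopen in $X$, then $X=H\sqcup(X\setminus H)$ is a partition into clopen sets, which extends to a partition $\beta X=\overline{H}\sqcup\overline{X\setminus H}$ into clopen sets, so $\overline{H}$ is clopen in $\beta X$ and hence $\overline{H}\in\Delta_\omega(\beta X)$. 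So assume $H$ is not clopen, and let $q\in H$ be the unique point with $H\setminus\{q\}$ open in $X$ and $\chi(q,H)\le\omega$. If $q$ is isolated in $H$, then $H\setminus\{q\}$ is closed in $H$, hence closed in $X$, hence clopen in $X$; thus $\overline{H}=\{q\}\cup\overline{H\setminus\{q\}}$ with $\overline{H\setminus\{q\}}$ clopen in $\beta X$ and not containing $q$ (its trace on $X$ is $H\setminus\{q\}$), so $\overline{H}$ is clopen modulo $q$ with $q$ isolated in $\overline{H}$, and again $\overline{H}\in\Delta_\omega(\beta X)$. Hence the only remaining case is: $H$ not clopen and $q$ not isolated in $H$.

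The key step is to build a closed continuous surjection $g\colon H\to[0,\omega]$ with $g^{-1}(\omega)=\{q\}$ whose remaining fibres are clopen \emph{in $X$}. Since $H$ is a subspace of the zero-dimensional space $X$, it is zero-dimensional; combining this with $\chi(q,H)\le\omega$ and the non-isolation of $q$ in $H$, I can choose a strictly decreasing local base $H=W_0\supsetneq W_1\supsetneq\cdots$ at $q$ consisting of $H$-clopen sets with $\bigcap_n W_n=\{q\}$. Put $D_n=H\setminus W_n$. Each $D_n$ is closed in $X$ (as $W_n$ is closed in $H$ and $H$ is closed in $X$) and, being $H$-clopen and disjoint from $q$, is relatively clopen in the $X$-open set $H\setminus\{q\}$; therefore $D_n$ is clopen in $X$, and consequently $E_n:=W_n\setminus W_{n+1}=D_{n+1}\setminus D_n$ is a nonempty clopen subset of $X$ with $\bigsqcup_n E_n=H\setminus\{q\}$. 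Define $g$ by $g\restriction E_n\equiv n$ and $g(q)=\omega$. Then $g^{-1}(n)=E_n$ and $g^{-1}([n,\omega])=W_n$ are clopen in $H$, so $g$ is continuous and clearly surjective; and $g$ is closed because any closed $F\subseteq H$ meeting infinitely many $E_n$ must contain $q$ (one point chosen from each such fibre yields a sequence converging to $q$), so $g(F)$ is either finite or contains $\omega$, and in either case is closed in $[0,\omega]$.

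Finally, pass to $\beta X$. Since $X$ is normal and $H$ is closed in $X$, $H$ is $C^{*}$-embedded in $X$, hence in $\beta X$, so $\overline{H}=\beta H$; in particular, for subsets of $H$ the closures in $\overline{H}$ and in $\beta X$ coincide. By Proposition \ref{proposition-Min-Sel-v40:1} applied to the closed surjection $g$ (legitimate as $H$ is Tychonoff and $[0,\omega]$ is compact), $(\beta g)^{-1}(\omega)=\overline{\{q\}}=\{q\}$ and $(\beta g)^{-1}(n)=\overline{E_n}$ for $n<\omega$, where each $\overline{E_n}$ is clopen in $\beta X$ (the partition argument from the clopen case applied to $E_n$, which is clopen in the normal space $X$). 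Hence
\[
  \overline{H}\setminus\{q\}=\beta H\setminus(\beta g)^{-1}(\omega)=(\beta g)^{-1}\bigl([0,\omega)\bigr)=\bigcup_{n<\omega}\overline{E_n}
\]
is open in $\beta X$, so $\overline{H}$ is clopen modulo $q$. For the character, $\{(\beta g)^{-1}((n,\omega]):n<\omega\}$ is a local base at $q$ in $\overline{H}$: these are open neighbourhoods of $q$, and if $V\ni q$ is open in $\overline{H}$ then $\overline{H}\setminus V$ is compact and misses $(\beta g)^{-1}(\omega)=\{q\}$, so its image under $\beta g$ is a compact — hence finite — subset of the discrete space $[0,\omega)$, contained in some $[0,m]$, whence $(\beta g)^{-1}((m,\omega])\subseteq V$. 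Thus $\chi(q,\overline{H})\le\omega$ and $\overline{H}\in\Delta_\omega(\beta X)$.

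The only genuine obstacle is the construction of $g$ — specifically, checking that the fibres $E_n$ are clopen \emph{in $X$}, which is exactly where zero-dimensionality of $X$ enters (via $D_n$ being relatively clopen in the $X$-open set $H\setminus\{q\}$) — together with keeping straight the difference between "open in $\beta H$" and "open in $\beta X$"; the latter is resolved by the identification $\overline{H}=\beta H$ coming from normality and by the observation that each $\overline{E_n}$ is already clopen in all of $\beta X$.
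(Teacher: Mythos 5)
Your proof is correct and takes essentially the same route as the paper's: the same three-case split (clopen, $q$ isolated in $H$, $q$ non-isolated), the same closed continuous surjection onto $[0,\omega]$ built from a strictly decreasing clopen base at $q$, and the same application of Proposition \ref{proposition-Min-Sel-v40:1} together with $\overline{H}=\beta H$. You merely spell out details the paper leaves implicit (closedness of the map, and that the fibres $E_n=U_n\setminus U_{n+1}$ are clopen in $X$ itself, which is where zero-dimensionality is not even needed beyond $H\setminus\{q\}$ being open in $X$).
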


\begin{proof}
  If $H$ is clopen in $X$, then $\overline{H}$ is clopen in $\beta X$,
  see e.g.\ \cite[Corollary 3.6.5]{engelking:89}. Otherwise, if $H$ is
  not clopen, then it is clopen modulo a point $q\in H$ such that $H$
  has a countable clopen base at $q$. In case $q$ is an isolated point
  of $H$, it follows that $H\setminus\{q\}$ is a clopen subset of
  $X$. Hence, as before, $q$ is an isolated point of
  $\overline{H}=\{q\}\cup \overline{ H\setminus \{q\}}$. Finally,
  assume that $q$ is not isolated in $H$, and take a strictly
  decreasing clopen base $\{U_n: n<\omega\}$ at $q$ in $H$ with
  $U_0=H$. Next, set $U_\omega=\{q\}$ and define a closed continuous
  surjective map $h:H\to [0,\omega]$ by
  $h(x)=\max\{n\leq \omega: x\in U_n\}$, $x\in H$. Since $X$ is a
  normal space, it follows that $\beta H=\overline{H}$, see e.g.\
  \cite[Corollary 3.6.8]{engelking:89}. Hence, $h$ can be extended to
  a continuous map $\beta h:\overline{H}\to [0,\omega]$. Then,
  according to Proposition \ref{proposition-Min-Sel-v40:1},
  $(\beta h)^{-1}(n)=\overline{h^{-1}(n)}$, $n\leq \omega$. This
  implies that $\overline{H}$ is clopen modulo the point $q$ because
  $(\beta h)^{-1}(\omega)=\{q\}$ and
  $(\beta h)^{-1}(n)=\overline{U_n\setminus U_{n+1}}$ is clopen in
  $\beta X$ for every $n<\omega$. Thus,
  $\overline{H}\in \Delta_\omega(X)$ because the family
  $(\beta h)^{-1}([n,\omega])$, $n< \omega$, forms a local base at $q$
  in $\overline{H}$.
\end{proof}

\begin{proof}[Proof of Theorem \ref{theorem-Min-Sel-v40:2}]
  Take a non-isolated point $p\in X$. Since $\mathscr{F}(X)$ has a
  $p$-maximal selection, by Theorem
  \ref{theorem-point-extreme-complete}, $X$ has an ordinal
  decomposition $\eta:X\to [0,\gamma]$ such that
  $\left\{\eta^{-1}(\alpha):\alpha\leq \gamma\right\}\subset
  \Delta_\omega(X)$ and $\eta^{-1}(\gamma)=\{p\}$. Then the
  corresponding extension $\beta\eta:\beta X\to [0,\gamma]$ has the
  same properties with respect to the point $p\in \beta X$. Namely, by
  Proposition \ref{proposition-Min-Sel-v40:1},
  $(\beta\eta)^{-1}(\gamma)=\{p\}$ and
  $(\beta\eta)^{-1}(\alpha)=\overline{\eta^{-1}(\alpha)}$ for every
  $\alpha<\gamma$. However, by Corollary \ref{corollary-Min-Sel-v32:1}
  and Theorem \ref{theorem-Min-Sel-v40:1}, $X$ is a zero-dimensional
  normal space. Hence, by Proposition \ref{proposition-Min-Sel-v40:2},
  $(\beta\eta)^{-1}(\alpha)\in \Delta_\omega(\beta X)$ for every
  $\alpha\leq\gamma$. In fact, for each limit ordinal
  $\lambda\leq\gamma$, the set
  $(\beta\eta)^{-1}(\lambda)=\overline{\eta^{-1}(\lambda)}$ is clopen
  modulo a point $q_\lambda\in \eta^{-1}(\lambda)$ with
  $\chi\left(q_\lambda, (\beta \eta)^{-1}(\lambda)\right)\leq
  \omega$. We can now follow the proof of the implication
  \ref{item:sa_points-v10:3}$\implies$\ref{item:sa_points-v10:1} in
  Theorem \ref{theorem-point-extreme-complete}. Briefly, by Theorem
  \ref{theorem-Min-Sel-v40:1},
  $\sel[\mathscr{F}(\beta X)]\neq \emptyset$. Therefore, by Corollary
  \ref{corollary-Min-Sel-v35:2}, $(\beta \eta)^{-1}(\lambda)$ has a
  $q_\lambda$-minimal selection
  $g_\lambda\in \sel\left[\mathscr{F}\left((\beta
      \eta)^{-1}(\lambda)\right)\right]$ for each limit ordinal
  $\lambda\leq \gamma$. For any other ordinal $\alpha\leq \gamma$,
  take a selection
  $g_\alpha\in \sel\left[\mathscr{F}\left((\beta\eta)^{-1}(\alpha)
    \right)\right]$. Then by \eqref{eq:Min-Sel-v35:3} and Theorem
  \ref{theorem-Min-Sel-v35:1}, the selection
  $f=\bigvee_{\alpha\leq\gamma}g_\alpha\in \sel[\mathscr{F}(\beta X)]$
  is $p$-maximal.
\end{proof}

Theorem \ref{theorem-Min-Sel-v40:2} brings the following natural
question.

\begin{question}
  \label{question-Min-Sel-v40:1}
  Let $X$ be a pseudocompact space which is selection
  pointwise-maximal. Then, is it true that $\beta X$ is also selection
  pointwise-maximal?
\end{question}

Regarding other open questions about the selection problem for
pseudocompact spaces, the interested reader is refereed to
\cite[Problems 3.13 and 3.15]{gutev-2013springer}, see also
\cite[Question 389]{gutev-nogura:06b}.\medskip

Another interesting application is the following property of selection
pointwise-maximal spaces. 

\begin{theorem}
  \label{theorem-sa_points-v9:2}
  Each selection pointwise-maximal space is selection
  pointwise-minimal. 
\end{theorem}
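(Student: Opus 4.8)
The plan is to fix a point $p\in X$ and produce a $p$-minimal selection for $\mathscr{F}(X)$, dualising the construction used for the implication \ref{item:sa_points-v10:3}$\implies$\ref{item:sa_points-v10:1} in Theorem \ref{theorem-point-extreme-complete}. Since $X$ is selection pointwise-maximal, Theorem \ref{theorem-point-extreme-complete} supplies an ordinal decomposition $\eta:X\to[0,\gamma]$ all of whose fibres lie in $\Delta_\omega(X)$ and one of which equals $\{p\}$; reorganising $[0,\gamma]$ as a topological sum exactly as in the proof of \ref{item:sa_points-v10:3}$\implies$\ref{item:sa_points-v10:1} of that theorem, I may moreover assume $\eta^{-1}(\gamma)=\{p\}$. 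Being continuous, $\eta$ is in particular a quasi-ordinal decomposition of $X$, so it falls within the scope of Theorem \ref{theorem-Min-Sel-v35:2}.

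Next I would choose the partial selections $g_\alpha$, $\alpha\leq\gamma$. Every fibre $\eta^{-1}(\alpha)$ is closed in $X$, so $\mathscr{F}(\eta^{-1}(\alpha))\subset\mathscr{F}(X)$ carries the relative Vietoris topology, and, since $\sel[\mathscr{F}(X)]\neq\emptyset$ (because $X$ is selection pointwise-maximal), the restriction of any member of $\sel[\mathscr{F}(X)]$ lies in $\sel[\mathscr{F}(\eta^{-1}(\alpha))]$. When $\alpha$ is not a limit ordinal, let $g_\alpha$ be any such restriction. When $\lambda\leq\gamma$ is a limit ordinal, the point $\lambda$ is not isolated in $[0,\gamma]$, so $\eta^{-1}(\lambda)$ is not a clopen subset of $X$; being a non-clopen member of $\Delta_\omega(X)\subset\Delta(X)$, it is clopen modulo a unique point $q_\lambda\in\eta^{-1}(\lambda)$. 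Because $q_\lambda$ is selection-maximal in $X$, there is a $q_\lambda$-maximal selection $h\in\sel[\mathscr{F}(X)]$, and its restriction $g_\lambda=h\uhr\mathscr{F}(\eta^{-1}(\lambda))$ is a $q_\lambda$-maximal selection for $\mathscr{F}(\eta^{-1}(\lambda))$.

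With these data Theorem \ref{theorem-Min-Sel-v35:2} applies and shows that $f=\bigwedge_{\alpha\leq\gamma}g_\alpha$, defined by \eqref{eq:Min-Sel-v35:5}, is a continuous selection for $\mathscr{F}(X)$. It remains to check that $f$ is $p$-minimal, which is exactly the remark recorded after Theorem \ref{theorem-Min-Sel-v35:2} given that $\eta^{-1}(\gamma)=\{p\}$: if $S\in\mathscr{F}(X)$ satisfies $f(S)=p$, then $f(S)\in\eta^{-1}(\min\eta(S))$ by \eqref{eq:Min-Sel-v35:5}, so $\min\eta(S)=\gamma$ because $\eta^{-1}(\gamma)=\{p\}$; hence $\eta(S)=\{\gamma\}$ and $S\subset\eta^{-1}(\gamma)=\{p\}$. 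Thus $p$ is selection-minimal, and as $p\in X$ was arbitrary, $X$ is selection pointwise-minimal.

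I do not anticipate a serious obstacle, since Theorems \ref{theorem-point-extreme-complete} and \ref{theorem-Min-Sel-v35:2} do the heavy lifting. The two points needing care are: (i) arranging $\{p\}$ to be the top fibre of the decomposition, handled by the same reorganisation of $[0,\gamma]$ already used in the proof of Theorem \ref{theorem-point-extreme-complete}; and (ii) supplying $q_\lambda$-maximal selections on the limit fibres $\eta^{-1}(\lambda)$. For (ii), in contrast with the construction of the $p$-maximal selection in Theorem \ref{theorem-point-extreme-complete}, the hypothesis that $X$ is selection pointwise-maximal already makes each such $q_\lambda$ selection-maximal in $X$, so the required $g_\lambda$ is obtained by mere restriction and no appeal to zero-dimensionality or to Corollary \ref{corollary-Min-Sel-v35:2} is needed.
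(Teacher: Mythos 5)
Your proof is correct and follows the same skeleton as the paper's: extract an ordinal decomposition $\eta:X\to[0,\gamma]$ with fibres in $\Delta_\omega(X)$ and $\eta^{-1}(\gamma)=\{p\}$ from Theorem \ref{theorem-point-extreme-complete}, equip each limit fibre with a $q_\lambda$-maximal selection, and invoke Theorem \ref{theorem-Min-Sel-v35:2} to get a continuous $f=\bigwedge_{\alpha\leq\gamma}g_\alpha$ which is $p$-minimal because $\eta^{-1}(\gamma)=\{p\}$. The one genuine difference is in step (ii): the paper produces the $q_\lambda$-maximal selection for $\mathscr{F}(\eta^{-1}(\lambda))$ \emph{intrinsically}, from $\chi(q_\lambda,\eta^{-1}(\lambda))\leq\omega$ together with zero-dimensionality via Corollaries \ref{corollary-Min-Sel-v35:1} and \ref{corollary-Min-Sel-v32:1}, whereas you restrict a globally $q_\lambda$-maximal selection $h\in\sel[\mathscr{F}(X)]$ to $\mathscr{F}(\eta^{-1}(\lambda))$, which exists by the pointwise-maximality hypothesis. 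Your variant is legitimate (the relative Vietoris topology on $\mathscr{F}(\eta^{-1}(\lambda))$ agrees with the Vietoris topology of the fibre as a space, and restricting a $q_\lambda$-maximal selection to a closed set containing $q_\lambda$ preserves $q_\lambda$-maximality) and is arguably more economical, since it does not use the countable character of the fibre at $q_\lambda$ at all; the paper's route has the merit of isolating exactly which local data ($\Delta_\omega$-membership) suffice, which is what makes the parallel argument for $\beta X$ in Theorem \ref{theorem-Min-Sel-v40:2} work, where global pointwise-maximality of the ambient space is not available.
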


\begin{proof}
  Take a non-isolated point $p\in X$. Since $\mathscr{F}(X)$ has a
  $p$-maximal selection, by Theorem
  \ref{theorem-point-extreme-complete}, $X$ has an ordinal
  decomposition $\eta:X\to [0,\gamma]$ such that
  $\left\{\eta^{-1}(\alpha):\alpha\leq \gamma\right\}\subset
  \Delta_\omega(X)$ and $\eta^{-1}(\gamma)=\{p\}$.  Take a limit
  ordinal $\lambda\leq\gamma$. Then $\eta^{-1}(\lambda)$ is not a
  clopen set and, according to \eqref{eq:Min-Sel-v39:1}, it is clopen
  modulo a point $q_\lambda\in \eta^{-1}(\lambda)$ with
  $\chi\left(q_\lambda, \eta^{-1}(\lambda)\right)\leq \omega$.  Hence,
  by Corollaries \ref{corollary-Min-Sel-v35:1} and
  \ref{corollary-Min-Sel-v32:1}, $\eta^{-1}(\lambda)$ has a
  $q_\lambda$-maximal selection
  $g_\lambda\in \sel\left[\mathscr{F}\left(
      \eta^{-1}(\lambda)\right)\right]$. For any other ordinal
  $\alpha\leq \gamma$, take a selection
  $g_\alpha\in \sel\left[\mathscr{F}\left(\eta^{-1}(\alpha)
    \right)\right]$. Then by \eqref{eq:Min-Sel-v35:5} and Theorem
  \ref{theorem-Min-Sel-v35:2}, the selection
  $f=\bigwedge_{\alpha\leq\gamma}g_\alpha\in \sel[\mathscr{F}(\beta
  X)]$ is $p$-minimal.
\end{proof}

There are simple examples of selection pointwise-minimal spaces which
are not selection pointwise-maximal.

\begin{example}
  \label{example-sa_points-v10:1}
  Identifying the limit points of countably many nontrivial converging
  sequences and equipping the resulting space with the quotient
  topology, we get the so called \emph{sequential fan} $S_\omega$. The
  only non-isolated point of $S_\omega$ is often denoted by
  $\infty$. The sequential fan $S_\omega$ is a non-metrizable
  Fr\'echet-Urysohn space, but $\infty\in S_\omega$ is a
  $G_\delta$-point of $S_\omega$. According to \cite[Corollary
  4.3]{garcia-ferreira-gutev-nogura-sanchis-tomita:99} and Corollary
  \ref{corollary-Min-Sel-v35:2}, $S_\omega$ has a selection
  $f\in \sel[\mathscr{F}(S_\omega)]$ which is $\infty$-minimal. Since
  all other points of $S_\omega$ are isolated, $S_\omega$ is a
  selection pointwise-minimal space. However, $\mathscr{F}(S_\omega)$
  has no $\infty$-maximal selection. This follows from Corollary
  \ref{corollary-Min-Sel-v25:1} because $\infty$ is a cut point of
  $S_\omega$, but $S_\omega$ is not first countable at $\infty$.
\end{example}

Theorems
\ref{theorem-Min-Sel-v35:2} and \ref{theorem-sa_points-v9:2} bring the
following natural question.

\begin{question}
  \label{question-Min-Sel-v40:2}
  Let $X$ be a selection pointwise-minimal space and $p\in X$. Then,
  is it true that $X$ has a quasi-ordinal decomposition
  $\mathscr{D}\subset \Delta_\omega(X)$ with $\{p\}\in \mathscr{D}$?
\end{question}

Regarding Question \ref{question-Min-Sel-v40:2}, let us remark that
each selection pointwise-minimal space is totally disconnected as
pointed out in the Introduction, see \cite[Proposition
4.4]{MR3122363}.  Moreover, the answer to this question is ``Yes'' in
the special case when $p\in X$ is an isolated point, or when there
exists a countable set $S\subset X\setminus\{p\}$ with
$p\in \overline{S}$. In this special case, the singleton $\{p\}$ is a
countable intersection of clopen set which follows from \cite[Theorem
4.4]{garcia-ferreira-gutev-nogura-sanchis-tomita:99} and
\cite[Proposition 5.6]{gutev-nogura:09a}, see also \cite[Remark
3.5]{MR3478342}.\medskip

According to \cite[Proposition 4.1]{Gutev2022a} (see also the proof of
\cite[Theorem 3.1]{gutev-nogura:00d}), if $X$ is a nontrivial
selection pointwise-minimal space, then
$\sel[\mathscr{F}(X\setminus\{p\})]\neq \emptyset$ for every $p\in
X$. Regarding hyperspace selections avoiding points, it was shown in
\cite[Corollary 5.2]{MR3712970} that if
$\sel[\mathscr{F}(Z)]\neq \emptyset$ for each subset $Z\subset X$ with
$|X\setminus Z|\leq 1$, then each connected component of $X$ is
compact. Subsequently, this result was refined in \cite[Theorem
1.1]{Gutev2022a} that a non-degenerate connected space $X$ is compact
and orderable if and only if
$\sel[\mathscr{F}(X\setminus\{p\})]\neq \emptyset$ for every $p\in
X$. Complementary to this result, it was shown in \cite[Proposition
4.5]{Gutev2022a} (the result is credited to J. A. C. Chapital) that if
$\sel[\mathscr{F}(X\setminus\{p\})]\neq \emptyset$ for every $p\in X$,
then $\sel[\mathscr{F}(X)]\neq \emptyset$. Moreover, it was shown in
\cite[Corollary 5.3]{MR3712970} that if
$\sel[\mathscr{F}(Z)]\neq \emptyset$ for every $Z\subset X$ with
$|X\setminus Z|\leq 2$, then $X$ is totally disconnected. Evidently,
this remains valid for every $Z\subset X$ with
$|X\setminus Z|<\omega$. Regarding hyperspace selections avoiding
closed sets or $F_\sigma$-sets, the interested reader is referred to
\cite{MR3712970}, see \cite[Theorem 5.4 and Question 5]{MR3712970}.


\begin{thebibliography}{10}

\bibitem{artico-marconi-pelant-rotter-tkachenko:02}
G.~Artico, U.~Marconi, J.~Pelant, L.~Rotter, and M.~Tkachenko, \emph{Selections
  and suborderability}, Fund. Math. \textbf{175} (2002), 1--33.

\bibitem{Ball1954}
B.~J. Ball, \emph{Countable paracompactness in linearly ordered spaces}, Proc.
  Am. Math. Soc. \textbf{5} (1954), 190--192.

\bibitem{bertacchi-costantini:98}
D.~Bertacchi and C.~Costantini, \emph{Existence of selections and
  disconnectedness properties for the hyperspace of an ultrametric space},
  Topology Appl. \textbf{88} (1998), 179--197.

\bibitem{MR3122363}
D.~Buhagiar and V.~Gutev, \emph{Selections and countable compactness}, Math.
  Slovaca \textbf{63} (2013), no.~5, 1123--1140.

\bibitem{costantini-gutev:99}
C.~Costantini and V.~Gutev, \emph{Recognizing special metrics by topological
  properties of the ``metric''-proximal hyperspace}, Tsukuba J. Math.
  \textbf{26} (2002), no.~1, 145--169.

\bibitem{douwen:90}
E. K. van Douwen, \emph{Mappings from hyperspaces and
  convergent sequences}, Topology Appl. \textbf{34} (1990), 35--45.

\bibitem{Dow2008}
A.~Dow and S.~Shelah, \emph{Tie-points and fixed-points in {{\(\mathbb
  N^*\)}}}, Topology Appl. \textbf{155} (2008), no.~15, 1661--1671.

\bibitem{Dow2018}
\bysame, \emph{Asymmetric tie-points and almost clopen subsets of
  {\(\mathbb{N}^*\)}}, Comment. Math. Univ. Carolin. \textbf{59} (2018),
  no.~4, 451--466.

\bibitem{engelking:89}
R.~Engelking, \emph{General topology, revised and completed edition},
  Heldermann Verlag, Berlin, 1989.

\bibitem{garcia-ferreira-gutev-nogura-sanchis-tomita:99}
S.~Garc{\'\i}a-Ferreira, V.~Gutev, T.~Nogura, M.~Sanchis, and A.~Tomita,
  \emph{Extreme selections for hyperspaces of topological spaces}, Topology
  Appl. \textbf{122} (2002), 157--181.

\bibitem{garcia-ferreira-sanchis:04}
S.~Garc{\'\i}a-Ferreira and M.~Sanchis, \emph{Weak selections and
  pseudocompactness}, Proc. Amer. Math. Soc. \textbf{132} (2004), 1823--1825.

\bibitem{MR0063646}
L.~Gillman and M.~Henriksen, \emph{Concerning rings of continuous functions},
  Trans. Amer. Math. Soc. \textbf{77} (1954), 340--362.

\bibitem{glicksber:59}
I.~Glicksberg, \emph{Stone-{{\v C}}ech compactifications of products}, Trans.
  Amer. Math. Soc. \textbf{90} (1959), 369--382.

\bibitem{gutev:00e}
V.~Gutev, \emph{Fell continuous selections and topologically well-orderable
  spaces {II}}, {P}roceedings of the {N}inth {P}rague {T}opological {S}ymposium
  (2001), Topology Atlas, Toronto, 2002, pp.~157--163 (electronic).

\bibitem{gutev:05a}
\bysame, \emph{Approaching points by continuous selections}, J. Math. Soc.
  Japan \textbf{58} (2006), no.~4, 1203--1210.

\bibitem{gutev-2013springer}
\bysame, \emph{Selections and hyperspaces}, Recent progress in general
  topology, {III} (K.~P. Hart, J.~van Mill, and P.~Simon, eds.), Atlantis
  Press, Springer, 2014, pp.~535--579.

\bibitem{MR3478342}
\bysame, \emph{Selections and approaching points in products}, Comment. Math.
  Univ. Carolin. \textbf{57} (2016), no.~1, 89--95.

\bibitem{MR3712970}
\bysame, \emph{Scattered spaces and selections}, Topology Appl. \textbf{231}
  (2017), 306--315.

\bibitem{zbMATH06882273}
\bysame, \emph{Usco sections and {C}hoquet-completeness}, Fund. Math.
  \textbf{242} (2018), no.~1, 93--102.

\bibitem{Gutev2022a}
\bysame, \emph{Hyperspace selections avoiding points}, Comment. Math. Univ.
  Carolin. \textbf{63} (2022), no.~3, 351--364.

\bibitem{Gutev2024a}
\bysame, \emph{Butterfly points and hyperspace selections}, Appl. Gen. Topol.
  \textbf{25} (2024), no.~2, 503--517.

\bibitem{gutev-nogura:00a}
V.~Gutev and T.~Nogura, \emph{Selections for {V}ietoris-like hyperspace
  topologies}, Proc. London Math. Soc. \textbf{80} (2000), no.~3, 235--256.

\bibitem{gutev-nogura:00b}
\bysame, \emph{Vietoris continuous selections and disconnectedness-like
  properties}, Proc. Amer. Math. Soc. \textbf{129} (2001), 2809--2815.

\bibitem{gutev-nogura:00d}
\bysame, \emph{Fell continuous selections and topologically well-orderable
  spaces}, Mathematika \textbf{51} (2004), 163--169.

\bibitem{gutev-nogura:03a}
\bysame, \emph{Selection pointwise-maximal spaces}, Topology Appl.
  \textbf{146-147} (2005), 397--408.

\bibitem{gutev-nogura:06b}
\bysame, \emph{Selection problems for hyperspaces}, Open {P}roblems in
  {T}opology 2 (E.~Pearl, ed.), Elsevier BV., Amsterdam, 2007, pp.~161--170.

\bibitem{gutev-nogura:08b}
\bysame, \emph{Set-maximal selections}, Topology Appl. \textbf{157} (2010),
  53--61.

\bibitem{gutev-nogura:09a}
\bysame, \emph{Weak orderability of topological spaces}, Topology Appl.
  \textbf{157} (2010), 1249--1274.

\bibitem{MR0185564}
H.~Herrlich, \emph{Ordnungsf{\"a}higkeit zusammenh{\"a}ngender {R}{\"a}ume},
  Fund. Math. \textbf{57} (1965), 305--311 (in German).

\bibitem{MR0093753}
M.~J. Mansfield, \emph{Some generalizations of full normality}, Trans. Amer.
  Math. Soc. \textbf{86} (1957), 489--505.

\bibitem{michael:51}
E.~Michael, \emph{Topologies on spaces of subsets}, Trans. Amer. Math. Soc.
  \textbf{71} (1951), 152--182.

\bibitem{mill-wattel:81}
J. van Mill and E.~Wattel, \emph{Selections and
  orderability}, Proc. Amer. Math. Soc. \textbf{83} (1981), no.~3, 601--605.

\bibitem{miyazaki:01b}
K.~Miyazaki, \emph{Continuous selections on almost compact spaces}, Sci. Math.
  Jpn. \textbf{53} (2001), 489--494.

\bibitem{jiang:10}
J.~Nan, \emph{Selections and countably apprachable points}, Topology Appl.
  \textbf{157} (2010), 123--126.

\bibitem{nogura-shakhmatov:97a}
T.~Nogura and D.~Shakhmatov, \emph{Characterizations of intervals via
  continuous selections}, Rendiconti del Circolo Matematico di Palermo, Serie
  II, \textbf{46} (1997), 317--328.

\bibitem{purisch:77}
S.~Purisch, \emph{The orderability and suborderability of metrizable spaces},
  Trans. Amer. Math. Soc. \textbf{226} (1977), 59--76.

\bibitem{Sapirovskii1975}
B.~E. {\v{S}apirovski\u{\i}}, \emph{The imbedding of extremally disconnected
  spaces in bicompacta. {$b$}-points and weight of pointwise normal spaces},
  Dokl. Akad. Nauk SSSR \textbf{223} (1975), no.~5, 1083--1086 (in Russian).

\bibitem{MR0257985}
L.~A. Steen, \emph{A direct proof that a linearly ordered space is hereditarily
  collectionwise normal}, Proc. Amer. Math. Soc. \textbf{24} (1970), 727--728.

\bibitem{venkataraman-rajagopalan-soundararajan:72}
M.~Venkataraman, M.~Rajagopalan, and T.~Soundararajan, \emph{Orderable
  topological spaces}, Gen. Top. Appl. \textbf{2} (1972), 1--10.

\end{thebibliography}

\end{document}